\pgfplotsset{compat=1.18}
\crefname{equation}{equation}{equations}
\Crefname{equation}{Equation}{Equations}
\crefname{section}{section}{sections}
\Crefname{section}{Section}{Sections}
\crefname{figure}{figure}{figures}
\Crefname{figure}{Figure}{Figures}
\crefname{subfigure}{panel}{panels}
\Crefname{subfigure}{Panel}{Panels}
\crefname{table}{table}{tables}
\Crefname{table}{Table}{Tables}
\crefname{algorithm}{algorithm}{algorithms}
\Crefname{algorithm}{Algorithm}{Algorithms}
\crefname{lemma}{lemma}{lemmas}
\Crefname{lemma}{Lemma}{Lemmas}
\crefname{theorem}{theorem}{theorems}
\Crefname{theorem}{Theorem}{Theorems}
\crefname{proposition}{proposition}{propositions}
\Crefname{proposition}{Proposition}{Propositions}
\crefname{corollary}{corollary}{corollaries}
\Crefname{corollary}{Corollary}{Corollaries}
\crefname{remark}{remark}{remarks}
\Crefname{remark}{Remark}{Remarks}
\newcommand{\email}[1]{\texttt{#1}}
\title{ Nonlinear compressive reduced basis approximation : when Taylor meets Kolmogorov}
\author[1,2]{Joubine Aghili}
\author[1]{Hassan Ballout \footnote{\email{hassan.ballout@math.unistra.fr}, corresponding author}}
\author[3]{Yvon Maday}
\author[1]{Christophe Prud'homme}
\affil[1]{IRMA, Université de Strasbourg, CNRS UMR 7501, 7 rue René Descartes, 67084 Strasbourg, France}
\affil[2]{INRIA Nancy-Grand Est, MACARON Project, Strasbourg, France}
\affil[3]{Sorbonne Université, CNRS, Universit\'e Paris Cit\'e, Laboratoire Jacques-Louis Lions (LJLL), F-75005 Paris, France}
\date{2026}
\begin{document}

\maketitle

\begin{abstract}
  This paper investigates a class of model reduction methods for efficiently approximating the solution of parameter-dependent partial differential equations, where the parameter is a multi-parameter vector $\vmu \in \mathbb{R}^p$. In classical settings where the Kolmogorov $N$-width decays sufficiently fast, it is effective to approximate the solution as a sum of $N$ separable terms, each being the product of a parameter-dependent coefficient and a space- (or space-time-) dependent function. This leads to reduced-order models with $N$ degrees of freedom and computational complexity of order ${\mathcal O}(N^3)$.

However, when the Kolmogorov $N$-width decays slowly, $N$ must be large to achieve acceptable accuracy, and such cubic complexity becomes prohibitive. 
The linear complexity measure in terms of Kolmogorov $N$-width has to be replaced by a more involved width: the Gelfand width for instance with its associated notion of sensing number.
Recent nonlinear reduction approaches based on this notion have proposed to decompose the $N$ coordinates into two groups: $n$ free variables and $\overline{n}$ dependent variables, where the latter are expressed as nonlinear functions of the former and $N= n+\overline n$. Several works have focused on the case where these $\overline{n}$ functions are homogeneous quadratic forms of the $n$ main variables, with optimization strategies for choosing $n$ given a target accuracy.

The main contribution of this paper is a rigorous analysis of the local sensing number, showing that 
the optimal choice $n = p$  is appropriate when the parameter variations remain locally concentrated around a reference point. In practical scenarios involving wide parameter ranges, the condition $p\le n \le p + k$ (with $k$ small) is valid and more robust from continuity arguments. Additionally, the assumption of a quadratic mapping, also justified in a local sense, becomes insufficient. More expressive nonlinear mappings — including those learned from data using machine learning — become necessary.

This work contributes a theoretical foundation for such nonlinear reduction strategies and highlights the need for further investigations into their reliability, interpretability, and safeguards, particularly in order to apply this type of approach and push back the Kolmogorov Barrier.
\end{abstract}

\tableofcontents

\section{Introduction}

Complexity reduction methods for parameterized systems have received increasing attention in recent years, particularly in the context of approximating solutions to parameterized problems modeled by partial differential equations (PDEs) or data assimilation. These methods often rely, directly or indirectly, on the notion of Kolmogorov $N$-width of some abstract set of solutions, which provides the optimal framework for achieving reduction using linear approximations. Reduced basis methods have sometimes been presented as a way to mitigate, at least for certain classes of problems, Bellman’s “curse of dimensionality.” The underlying argument is that for many parametrized partial differential equations, the set of solutions does not fill the ambient high-dimensional function space, but rather concentrates on an intrinsically low-dimensional manifold \cite{rbpp, benner2015survey, bookRb, hesthaven2016certified, maday2020reduced}. This was demonstrated by Cohen and DeVore \cite{cohen2016kolmogorov} by linking the rapid decrease in the Kolmogorov $N$-width of certain families of solutions to their regularity as a function of the parameters of the problem. Thus, when the $N$-width decays exponentially fast,  reduced basis methods can indeed break the computational burden that would otherwise be associated with Bellman’s curse.

As a result, in many cases, particularly for elliptic or parabolic problems, the associated Kolmogorov $N$-width decays rapidly with $N$, the dimension of the linear space, allowing effective reduction methods to be employed. However, another concept of ``curse'' has quickly emerged, not related to the combinatorial explosion with dimension, but the insufficient linear expressiveness in the face of a manifold of inherently nonlinear solutions. This is the case, e.g.
when problems exhibit convection phenomena or the parameter space becomes large, the decay of the Kolmogorov $N$-width is significantly slowed down, which impedes vanilla linear subspace approximations.
 Consequently, for a desired precision, traditional reduction methods may lose competitiveness compared to classical approximation techniques (such as continuous or discontinuous finite elements, finite volume methods, or spectral methods). This issue is known as the \textit{Kolmogorov barrier} (see, e.g. \cite{ahmed2020breaking, pp8, barnett2022quadratic}).

 The underlying idea behind reduction methods is to observe that classical approximation methods are versatile in the sense that the same method is capable of approximating PDE solutions in a wide range of applications, including fluid dynamics, structural mechanics, and wave propagation.
On the contrary, if we focus on a specific class of problems characterized by a partial differential equation (PDE) parameterized,
 expressed in residual form as find $u(\vmu)\in  X$ s.t.
\begin{equation}
   \label{eq:1}
   \mathfrak{R}(u(\vmu) ; \vmu) = 0,
 \end{equation}
where $X$ is a given Hilbert space, this versatility may hinder the efficiency of the method. The solution manifold is then
$\mathcal{S}:= \{ u(\vmu) \in X \mid \vmu \in \mathcal{P} \} \subset X$, which is the set of all solutions $u(\vmu)$ as the parameter $\vmu= (\mu_1, \mu_2, ..., \mu_p)$ varies over the admissible parameter set $\mathcal{P} \subset \mathbb{R}^p$, one can look for a tailored sequence of discrete spaces that aligns with this objective.
 Such a method will not perform well for other types of problems, but is optimized for the given parameterized system \eqref{eq:1}. To achieve this,  when the (linear) Kolmogorov $N$-width is not small, one may adopt a more sophisticated framework for complexity reduction, transitioning from linear to nonlinear approaches.

Before presenting the Nonlinear Compressive Reduced Basis Approximation (NCRBA) method, we will first briefly review the main strategies for overcoming the Kolmogorov barrier in transport problems.
We will not dwell too much on local strategies based on piecewise linear approximations of the solution manifold by introducing adaptability, where we refer e.g. to  \cite {amsallem2012nonlinear, dihlmann2011model, maday2013locally, san2015principal, carlberg2015adaptive,borggaard2016goal,ladeveze2016reduced, badias2017local, peherstorfer2020model, geelen2022localized}, but will focus on global approaches where appropriate intrinsic nonlinear transformations can be used to rectify the manifold of solutions.  In the context of transport-dominated problems, where the Kolmogorov $N$-width of the solution manifold decays only as $\mathcal{O}(1/\sqrt{N})$, linear model reduction techniques often fail to provide efficient approximations. One of the earliest attempts to overcome this limitation via transformations of the physical domain can be traced back to Ohlberger and Rave's pioneering work on nonlinear approximation spaces \cite{ohlberger2013nonlinear}. Since then, several transformation-based strategies have been proposed to improve the approximability of the solution manifold. Notably, several methods exploit information on characteristic lines to define nonlinear Lagrangian mappings \cite{mojgani2017lagrangian, reiss2018shifted, lu2020lagrangian}. An alternative approach, grounded in the theory of integrable systems, involves constructing a time-dependent test basis via Schrödinger operator Lax pairs \cite{gerbeau2014approximated}.

A broader and increasingly influential class of techniques is based on optimal transport theory to align solution snapshots and reduce variability across the parameter space \cite{iollo2014advection, bernard2018reduced, ehrlacher2020nonlinear, rim2023manifold, iollo2025point}. Closely related are registration-based methods, which reinterpret the transport problem as a geometric alignment task and apply variational approaches to learn optimal deformation maps \cite{taddei2020registration, ferrero2022registration}.

In the field of nonlinear structural dynamics, an innovative reduction strategy was introduced in \cite{jain2017quadratic}. It relies on representing the solution as the sum of two contributions: a first component lying in the tangent subspace spanned by the vibration modes, and a second contribution expressed as a homogeneous quadratic correction involving modal derivatives. This leads to the following parameterization:
\begin{equation}\label{rixen}
\tilde{\mathbf u}(\vmu) = \mathbf u_{\mathrm{ref}}  + \mathbf V\mathbf q(\vmu) +
\mathbf W \mathbf q(\vmu) \mathbf q(\vmu)^\top,
\end{equation}
where $\mathbf V$ collects the dominant modes, and the symmetric matrix $\mathbf W$ encodes the curvature of the manifold through modal derivative information.

More recently, inspired by both a priori knowledge-based constructions and a posteriori data-driven recognition strategies, several works—starting with the pioneering contribution of Lee and Carlberg \cite{lee2020model}—have explored machine-learning-based nonlinear model reduction. These approaches exploit deep architectures such as convolutional autoencoders to learn encoders, decoders, or both, in order to parameterize low-dimensional manifolds. Several alternative methodologies have emerged to address this nonlinear complexity in reduced basis methods based on different aspects of advanced machine learning techniques. 
For instance, Franco et al. \cite{pp5} and Vitullo et al. \cite{pp3} explored neural network-based strategies for problems with microstructural complexity, demonstrating their potential for accurate and efficient model reduction. 
Other notable contributions include the operator inference framework by Kramer et al. \cite{pp7}, which directly learns nonlinear reduced models from data. 
Despite these advancements, existing methods often require a large amount of data to effectively train the neural network and provide a reliable and robust approximation of the solution. Furthermore, current approaches frequently require learning all the modes, which increases computational demands, expands training set sizes, and may introduce potential instability.

Remarkably, as highlighted in \cite{fresca2021comprehensive}, some of these methods do not even require solving the underlying parametric equation in the online stage.
In parallel, manifold-based approaches similar in spirit to \eqref{rixen} have emerged, but with $\mathbf V$ and $\mathbf W$ now obtained through learning instead of analytical derivation \cite{barnett2022quadratic, geelen2023operator}. Further refinements and performance enhancements have been proposed, for instance in \cite{greedy_quad}. It is important to note at this stage that these methods are not associated with a posteriori error estimates, which would allow the approximation to be certified or even the error to be quantified.

 In this context, the Nonlinear Compressive Reduced Basis Approximation (NCRBA) method, inspired by the notions of \textit{Gelfand width} (with its associated
notion of sensing number)  and \textit{nonlinear Kolmogorov $N$-width},
see \cite{DeVore1998},
 has been introduced in
 Barnett et al. \cite{barnett2023}, Cohen et al. \cite{pp10}.
 This approach builds on the observation that, when a solution is expressed in a linear reduced basis (such as one arising from a Kolmogorov-type approach, as previously described, but leading to an approximation of size $N$, which may be too large to be practical), actually only the first few $n$ coefficients in this basis represent genuine degrees of freedom.
 The remaining $N-n$ coefficients, though necessary for the desired approximation accuracy, are not independent variables but can be deduced from the first $n$ coefficients via a functional relationship.
The primary challenge thus lies in establishing a methodology that enables the reconstruction of the $N-n$ higher-order coefficients from the $n$ fundamental coefficients.

Note that, as a general rule, the effectiveness of nonlinear approximation methods relies on the accurate estimation of tangent spaces to the underlying solution manifold $\cS$. 
This geometric perspective has deep roots in manifold learning theory, where tangent space estimation has been extensively studied. 
The work of Tyagi et al. \cite{tyagi2013tangent} established theoretical foundations for tangent space estimation on smooth Riemannian manifolds using local principal component analysis (PCA), providing convergence guarantees under specific sampling conditions. 
These results were subsequently strengthened by Aamari and Levrard \cite{aamari2019nonasymptotic}, who derived optimal non-asymptotic convergence for $\mathcal{C}^k$ manifolds. 
Recent advances have incorporated curvature-aware methods, such as the CA-PCA approach proposed by Gilbert and O'Neill \cite{gilbert2023ca}, which adapt to local manifold geometry for improved estimation accuracy. 
These geometric insights directly inform the selection and construction of reduced basis components in our approach, where the coefficients associated with the first $n$ modes correspond to coordinates in the estimated tangent space of the solution manifold.

This paper presents several key contributions that advance both the theoretical understanding and practical implementation of the NCRBA method, which continues the study of the method and extends the understanding of its theoretical formalism of Ballout et al. \cite{ballout2024nonlinear}. In particular, we answer the question of choosing the number $n$ of reduced basis modes based on the number of parameters $p$ on which the problem depends.

Specifically:
\begin{itemize}

\item[a.] We establish a theoretical link between the NCRBA method and the classical Taylor series expansion to rigorously justify the selection of the first reduced basis components as genuine degrees of freedom. While existing work, such as that by Lee and Carlberg \cite{lee2020model}, effectively applies neural networks for nonlinear approximation, our method improves robustness by anchoring the choice of reduced basis components in the well-established Taylor series theory. This connection ensures that our approach achieves a stable and efficient representation across a wide range of parameter variations.
\item[b.] Our analysis reveals that, at least locally, the number of effective modes (expressed in terms of sensing number) — or degrees of freedom — coincides with the dimension of the parameter space. 
This provides a stronger theoretical foundation than previously employed heuristic methods. In our context, our results on the sensing number complement those established by Franco et al. \cite{pp5}
related to the nonlinear Kolmogorov width.

\item[c.] We demonstrate that polynomial approximations, although effective locally, have limited validity in broader parameter spaces. This observation highlights a key limitation of polynomial-based reduction approaches and underscores the need for adaptable, localized methods, such as the NCRBA.
\item[d.] By establishing a parallel between the NCRBA method and the concept of sensing number in nonlinear Kolmogorov theory, we strengthen the theoretical foundation of our approach and enhance its predictive capability in scenarios where traditional Kolmogorov methods are limited. 
\item[e.] We argue that our targeted learning approach, which focuses on high-energy modes rather than attempting to learn all possible modes, significantly improves efficiency. Since these dominant modes decay in absolute value, our method requires a smaller training dataset compared to techniques that indiscriminately attempt to learn all modes or even the solution itself expressed in classical approximation basis sets. This improvement addresses major challenges faced by earlier neural network-based strategies, which can suffer from data inefficiency and instability when approximating low-energy modes. Note the extension to the  NCRBA method we have proposed following this argument in \cite{ballout2025combined}.
\end{itemize}

Recall that we are considering the case where the Kolmogorov $N$-width decays slowly, implying that the optimal linear subspace minimizing the worst-case projection error is of large dimension, and, in practice, generally not accessible. The goal of the preliminary stage of any reduced basis technology is thus to construct a surrogate reduced space that approximates this optimal subspace as closely as possible, while carefully avoiding unnecessary increases in dimension beyond what is required to meet the target accuracy.

Several strategies have been proposed for this purpose, often as a decisive offline step that governs the efficiency of subsequent reduced-order modeling. For a fixed reduced dimension, the resulting model accuracy can differ by orders of magnitude depending on the method used to build the reduced basis. The literature typically distinguishes two broad methodological families: one, inspired by statistical and machine learning approaches, constructs the reduced space from snapshots shifted by a reference solution (centered manifolds); the other directly addresses the uncentered solution manifold, treating the full set of states as a geometric object to be compressed.
This is the reason why, in \Cref{sec:linear-rb}, we briefly recall two main classes of methods to construct the reduced basis: the SVD or POD approach and the greedy algorithm. We propose a combined method that we call the Greedy-Sampled SVD method, which appears to be simple to implement and improves both approaches.

The subsequent sections of this paper are organized as follows. 
In \Cref{sec:linear-rb}, we succinctly revisit two primary classes of methods for constructing a reduced basis: the Singular Value Decomposition (SVD) approach, often referred to as the Proper Orthogonal Decomposition (POD), and the greedy algorithm.
We also propose the \textit{Greedy-Sampled SVD} method, which appears to be simple to implement and takes advantage of the two approaches and improves both.
In \Cref{sec:nonlinear-approximation}, we introduce notions of \textit{nonlinear widths} by means of encoder and decoders, which are essential for defining the NCRBA method.
The principal contribution of the paper is developed in \Cref{sec:taylor}, where we treat the connection between the space spanned by the SVD modes of the solution manifold $\cS$ with the first monomials of the Taylor expansion around a given element.
The significance of the quadratic terms is underscored in \Cref{sec:quadratic}, where we also present comparative analysis with alternative quadratic methods.
Useful results needed in the previous sections are recalled in \Cref{appendix:principal_angles} and \Cref{appendix:svd_modes_convergence}.
Finally, numerical validations are given within \cref{sec:num}, more precisely:
\begin{inparaenum}[(i)]
  \item comparison between the Greedy-Sampled SVD and other basis construction methods,
  \item illustrations of the proximity of the space spanned by SVD modes to those spanned by Taylor modes,
  \item an application of the NCLRB method to a 2D parametrized thermal problem,
  \item comparison between multiple quadratic approximations.
\end{inparaenum}

\section{Linear approximation and reduced basis construction}
\label{sec:linear-rb}
In this section, we briefly recall the classical framework of linear approximation, together with the notion of the Kolmogorov  $N$-width and the associated optimal reduced spaces for the solution manifold $\mathcal{S} \subset X$ of \eqref{eq:1}. Note that $X$ is then the ``natural'' Hilbert space associated with the underlying PDE.

Linear approximation methods aim at approximating the solution manifold $\mathcal{S}$  by an appropriate $N$-dimensional linear subspace $\mathcal{V}_N \subset X$, commonly referred to as the reduced space. The performance of such an approximation is quantified by the Kolmogorov $N$-width, defined by
\begin{equation}
\label{eq:kolmogorov_width}
    d_N(\mathcal{S}) := \inf_{\substack{\mathcal{V} \subset X \\ \dim(\mathcal{V}) \le N}} \sup_{u \in \mathcal{S}} \inf_{v \in \mathcal{V}} \| u - v \|_X.
\end{equation}
Note that, in the Hilbertian setting considered here, the best approximation of any  $u \in \mathcal{S}$ in a given subspace $\mathcal{V}_N$ is achieved by its orthogonal projection $P_{\mathcal{V}_N}(u)$.

The quantity $d_N(\mathcal{S})$  characterizes both the optimal approximation error attainable by any linear reduced space of dimension $N$ and the corresponding (generally non-unique) optimal subspaces.
In practice, however, determining an optimal subspace  $\mathcal{V}_N$ achieving the infimum in \eqref{eq:kolmogorov_width} is in general an open and computationally intractable problem, even in a Hilbert space framework such as the one considered here.

We therefore focus on two well-established approaches for constructing effective approximations of such optimal reduced spaces, namely the singular value decomposition (SVD) and greedy algorithms, while noting that several other strategies have been proposed in the literature.
Finally, we propose an enhanced strategy, the \emph{Greedy-Sampled SVD} (GSS), which combines the advantages of both approaches and aims to provide reduced spaces that are closer to the optimal ones.

In all that follows, we shall assume that we have discretized the problem by a high-fidelity method, with high accuracy, so that 
we shall identify the exact solution 
\(u(\boldsymbol{\mu}) \) of~\cref{eq:1} 
with its high-fidelity approximation \(u_{\mathcal{N}}(\boldsymbol{\mu}) \). In addition \(u_{\mathcal{N}}(\boldsymbol{\mu}) \) will be represented by a column vector \(  \mathbf{u}_{\mathcal{N}}(\boldsymbol{\mu}) \in X \cong \mathbb{R}^{\mathcal{N}} \).

\subsection{Singular Value Decomposition}
\label{sec:svd}
\medskip
While the SVD is a general concept applicable to any matrix \( \mathbf{S} \), in our context, it is applied to a \emph{snapshot matrix} of the form,
\[
\mathbf{S} = \left[ \mathbf{s}_{1}, \ldots,\mathbf{s}_{M} \right] \in \mathbb{R}^{\mathcal{N} \times M},
\]
where each column \( \mathbf{s}_{i} =  \mathbf{u}_{\mathcal{N}}(\boldsymbol{\mu}^{i}) \in \mathbb{R}^{\mathcal{N}} \) represents the vector of degrees of freedom associated with the high-fidelity solution of~\cref{eq:1} for a given parameter \( \boldsymbol{\mu}^{i} \in \mathcal{P}_M \). Here, \( \mathcal{P}_M \subset \mathcal{P} \) denotes a finite \emph{training set} of cardinality \( M \), \emph{sufficiently large and suitably chosen} to adequately represent the parameter domain \( \mathcal{P} \). Note that, in general, $\mathcal{N} \ge  M$, although this is not a mandatory requirement.

 The singular value decomposition of \( \mathbf{S} \) reads:
\begin{equation}
  \label{eq:svd(old)}
  \mathbf{S} = \mathbf{U} \mathbf{\Sigma} \mathbf{Z}^{\top},
\end{equation}
where:
\begin{itemize}
    \item \( \mathbf{U} = \left[\boldsymbol{\varphi}_1, \dots, \boldsymbol{\varphi}_{\mathcal{N}}\right] \in \mathbb{R}^{\mathcal{N} \times \mathcal{N}} \) is an orthogonal matrix whose columns are the so-called left singular vectors of \( \mathbf{S} \).
    \item \( \mathbf{\Sigma} \in \mathbb{R}^{\mathcal{N} \times M} \) is a ``diagonal'' matrix containing the singular values \( (\sigma_1, \sigma_2, \dots) \), classically ranked in decreasing order,  \( \sigma_k \ge \sigma_{k+1} \) for all \( k = 1, \dots, M-1 \).
    \item \( \mathbf{Z} \in \mathbb{R}^{M \times M} \) is an orthogonal matrix whose columns are the so-called right singular vectors of \( \mathbf{S} \).
\end{itemize}

\medskip

The ``discrete'' space \(X\) is typically equipped with an inner product of the form
\[
(\mathbf{v},\mathbf{w})_X = \mathbf{v}^\top \mathbf{M} \mathbf{w}, \qquad \mathbf{M} \in \mathbb{R}^{\mathcal{N}\times \mathcal{N}} \ \text{being a symmetric positive definite matrix},
\]
it is important to incorporate the \emph{mass matrix} \( \mathbf{M} \) in the reduction procedure. 
We then introduce the  \emph{weighted SVD} of the snapshot matrix by considering the factorization
\[
\mathbf{M} = \mathbf{L}^\top \mathbf{L},
\]
for instance, via a Cholesky decomposition, and computing the standard SVD of the transformed matrix
\[
\tilde{\mathbf{S}} = \mathbf{L}\,\mathbf{S}  = \tilde{\mathbf{U}} \tilde{\mathbf{\Sigma}} \tilde{\mathbf{Z}}^{\top}.
\]
This yields the matrix \( \mathbf{\Phi} = \mathbf{L}^{-1} \tilde{\mathbf{U}} \), the columns of which are modes that are orthonormal in the metric induced by \( \mathbf{M} \).

\medskip

Alternatively, one can also consider the correlation matrix
\[
\mathbf{C} = \mathbf{S}^\top \mathbf{M} \mathbf{S} \in \mathbb{R}^{M\times M}.
\]
The associated eigenvalue problem
\[
\mathbf{C} \mathbf{v}_k = \lambda_k \mathbf{v}_k, \qquad k=1,\dots,M,
\]
yields eigenvalues \( \lambda_1 \geq \lambda_2 \geq \dots \geq 0 \) and eigenvectors \( \mathbf{v}_k \in \mathbb{R}^M \). The associated modes in the high-fidelity space are then given by
\[
\boldsymbol{\varphi}_k = \frac{1}{\sqrt{\lambda_k}} \, \mathbf{S}\,\mathbf{v}_k, \qquad (\boldsymbol{\varphi}_k,\boldsymbol{\varphi}_\ell)_X = \delta_{k\ell}.
\]
These vectors \( \{ \boldsymbol{\varphi}_k \} \) are the \emph{Proper Orthogonal Decomposition} (POD) modes and form an orthonormal system with respect to the \(X\)-inner product.
\medskip

These two approaches (eigenvalue problem on \( \mathbf{C} \), or weighted SVD on \( \tilde{\mathbf{S}} \)) are fully equivalent with $\sigma_k = \sqrt{\lambda_k}$. The former is often preferred in practice because solving the $M\times M$ eigenvalue problem for $\mathbf{C}$ is computationally cheaper than performing an SVD on the full snapshot matrix.

\medskip

\paragraph{Optimality property.}
For $N\le M$, let \( \mathcal{V}_N = \mathrm{span}\{\boldsymbol{\varphi}_1,\dots,\boldsymbol{\varphi}_N\} \). This subspace satisfies
\begin{equation}
\label{eq:svd_opt}
\sum_{i=1}^{M}\left\| \mathbf{s}_{i} - P_{\mathcal{V}_N}\left( \mathbf{s}_{i} \right) \right\|_{X}^2
= \inf_{\substack{\mathcal{W} \subset X \\ \dim(\mathcal{W}) \le N}} \sum_{i=1}^{M} \left\| \mathbf{s}_{i} - P_{\mathcal{W}}\left( \mathbf{s}_{i} \right) \right\|_{X}^2 = \sum_{k=N+1}^{M} \lambda_k, 
\end{equation}
where \( \|\mathbf{v}\|_X^2=(\mathbf{v},\mathbf{v})_X \). Thus, \( \mathcal{V}_N \) is the best \(N\)-dimensional approximation space for the snapshots in the \(X\)-norm, 
which is an optimality property in average (over the set of parameters), \ie $\ell^2(\mathcal{P}_M)$ 
rather than $\ell^\infty(\mathcal{P}_M)$ 
involved in the definition in \cref{eq:kolmogorov_width}

\medskip

\paragraph{Centered vs.~non-centered snapshots.}
An additional modeling choice concerns whether the snapshots should be \emph{centered} before performing the decomposition. Defining the empirical mean
\[
\bar{\mathbf{s}} = \frac{1}{M} \sum_{i=1}^M \mathbf{s}_{i},
\]
one can either (i) perform the decomposition on the fluctuations
\(
\mathbf{s}_{i} - \bar{\mathbf{s}}
\),
or (ii) directly use the raw snapshots \( \mathbf{s}_{i} \).  
In applications such as turbulence modeling, fluid dynamics, or signal processing, it is customary to adopt the centered version, so that the modes represent only the fluctuations around the mean field. In contrast, in the context of Reduced Basis Methods (RBM) for parametric PDEs, it is often preferable \emph{not} to center: the reduced space is then built to approximate the entire solution manifold, not just deviations from the mean. 

\begin{remark}
\label{rem:svd}
It is worth emphasizing that the general given modal expansion $\mathbf{u}( \vmu)=\sum_{k}\alpha_k(\vmu)\boldsymbol{\varphi}_k$ in $\mathbb{R}^{\mathcal{N} }$ for $\vmu\in \mathcal{P}$,  admits a direct identification with the SVD if \eg the family \(\{\boldsymbol{\varphi}_k\}\) is orthonormal in $\mathbb{R}^{\mathcal{N} }$ and the coefficient vectors \(\{\alpha_k(\vmu)\}\) are orthogonal in $L^2(\mathcal{P})$ with specific decay of the norms that provide the unique optimal orthogonal decomposition in the sense of~\cref{eq:svd_opt}.
\end{remark}

We end this subsection by noting that there is a large literature on various ways of performing that sampling and the calculation of the SVD modes, including the method of snapshots, greedy and adaptive sampling strategies, and more recently the smart use of randomized and probabilistic approaches for large-scale reduced-order modeling, see, e.g. \cite{sirovich1987turbulence, bui2008model, amsallem2011online, halko2011finding,  binev2011convergence, holmes2012turbulence, bookRb, hesthaven2016certified, balabanov2019randomized, cohen2020reduced, chellappa2021training, carere2021weighted, balabanov2022block, billaud2024probabilistic}.

\subsection{Greedy Algorithms}
\label{sec:greedy}
Several other widely adopted approaches to construct a reduced space fall within the family of \emph{greedy algorithm}. Unlike the SVD method, these algorithms build the reduced basis iteratively. At each iteration $n$, a new basis vector is added by selecting the parameter value that maximizes a suitable representation of the $X$-projection error. This strategy ensures that the reduced space is enriched in the directions where the current reduced model performs worst. 
Concretely, the greedy procedure requires computing one high-fidelity solution per iteration. Thus, to build a reduced basis of dimension \( N \), only \( N \) high-fidelity solves are needed. This represents a significant computational advantage compared to the SVD/POD approach, which typically requires computing \( M \gg N \) snapshot solutions in advance.

The various greedy approaches differ in the way this representation of the projection error is performed. Typically, it is based on an \emph{a posteriori error estimator} that provides a surrogate to a projection error on the current reduced space. This evaluation through such a posteriori error estimator leads to the notion of \emph{weak greedy algorithm}.
An essential ingredient of the weak greedy algorithm is thus the availability of an error estimator \( \Delta_n(\boldsymbol{\mu}) \), which evaluates the projection error 
on the current \( n \)-dimensional reduced space \( \mathcal{V}_n \) (built in previous iterations), with a cost that is a function of $n$ (say $\mathcal O(n^3)$ for instance ). Optimally, it should provide a bound for the error induced by projecting the high-fidelity solution \( u_{\mathcal{N}}(\boldsymbol{\mu}) \) onto the current  reduced space \( \mathcal{V}_n \), \ie, there exists $\gamma\in \mathbb{R}^{+*} $
\begin{equation}
\label{eq:greedy_error_estimator}
0 < \gamma \le \frac {\left\| \mathbf{u}_{\mathcal{N}}(\boldsymbol{\mu}) - P_{\mathcal{V}_n}\left( \mathbf{u}_{\mathcal{N}}(\boldsymbol{\mu}) \right) \right\|_X}{\Delta_n(\boldsymbol{\mu})} \le 1, \quad \forall \boldsymbol{\mu} \in \mathcal{P}.
\end{equation}

Another essential ingredient to ensure the computational efficiency of the greedy algorithm is, as in Section~\ref{sec:svd}, to replace the continuous parameter domain \( \mathcal{P} \) by a finite \emph{training set} \( \mathcal{P}_M \subset \mathcal{P} \) which can be chosen very large thanks to the marginal cost of the prediction and error estimator. This avoids solving a global optimization problem over \( \mathcal{P} \) at each iteration and instead reduces the selection of the next parameter to a discrete search over \( \mathcal{P}_M \), leading to the following enumeration problem:
\begin{equation}
\label{eq:greedy_selection}
    \boldsymbol{\mu}^{(n+1)} = \arg\max_{\boldsymbol{\mu} \in \mathcal{P}_M} \Delta_n(\boldsymbol{\mu}).
\end{equation}
Then, a high-fidelity solution $\mathbf{u}_{\mathcal{N}}(\boldsymbol{\mu}^{(n+1})$
is computed and added to the current $\mathcal{V}_n$.

We present in Algorithm~\ref{alg:weak_greedy},  the \emph{weak greedy algorithm} that contains some orthogonalization procedure based on the mass matrix $\mathbf M$ (see Algorithm~\ref{alg:ortho}). For the efficient construction of the error estimator \( \Delta_n(\boldsymbol{\mu}) \), we refer the reader to~\cite{rbpp}.

\begin{algorithm}[H]
\caption{Weak Greedy Algorithm}
\label{alg:weak_greedy}
\begin{algorithmic}[1]
\State \textbf{Input:} Training set \( \mathcal{P}_M \subset \mathcal{P} \), tolerance \( \varepsilon > 0 \), maximum dimension \( N_{\max} \), initial parameter \( \boldsymbol{\mu}^{(1)} \)
\State \textbf{Output:} Basis \(  \mathbf{V} \in \mathbb{R}^{\mathcal{N}\times N} \)
\State  \(\mathbf{V}= []\), \( n = 0 \), \( \Delta = \varepsilon +1 \)
\While{ \( n < N_{\max} \) \textbf{and} \( \Delta > \varepsilon \) }
    \State \( n \gets n + 1 \)
    \State Compute \( \mathbf{u}_\mathcal{N}(\boldsymbol{\mu}^{(n)}) \)
    \State \(\boldsymbol{\varphi_n} = \text{GramSchmidt}(\mathbf{V}, \mathbf{u}_\mathcal{N}\left(\boldsymbol{\mu}^{(n)}), \mathbf{M}\right)\)
    \State \( \mathbf{V} \gets [\mathbf{V},\,\boldsymbol{\varphi_n}]\)
    \State Select \( \boldsymbol{\mu}^{(n+1)} \in \mathcal{P}_M \) such that:
    \[
    \boldsymbol{\mu}^{(n+1)} = \arg\max_{\boldsymbol{\mu} \in \mathcal{P}_M} \Delta_n(\boldsymbol{\mu})
    \]
    \State Set \( \Delta \gets \Delta_n(\boldsymbol{\mu}^{(n+1)}) \)

\EndWhile
\State $N\gets n$ 
\end{algorithmic}
\end{algorithm}

\begin{algorithm}[H]
\caption{Gram-Schmidt orthonormalization}
\label{alg:ortho}
\begin{algorithmic}[1]
\Function{GramSchmidt}{$\mathbf{V}, \mathbf{u}, \mathbf{M}$}
    \If{$\mathbf{V} = []$}
        \State $\mathbf{z}= \mathbf{u}$

    \Else
     \State $\mathbf{z}= \mathbf{u}- \mathbf{V}\mathbf{V}^{\top}\mathbf{M}\mathbf{u}$
    \EndIf
    \State $\mathbf{z} \gets \mathbf{z}/\left\| \mathbf{z}\right\|_M$
    \State \Return \(\mathbf{z}\)
\EndFunction

\end{algorithmic}
\end{algorithm}

From~\cref{eq:greedy_selection}, one can clearly observe that the greedy method operates in a \emph{worst-case} sense—namely in the \( \ell^\infty(\mathcal{P}_M) \) norm—similar to the Kolmogorov \( N \)-width (in $L^\infty(\mathcal{P}$)). This contrasts with the SVD-based approach (see \cref{sec:svd}), which provides an \emph{optimal basis in the mean-square} (\ie, \( \ell^2(\mathcal{P}_M) \)) sense.

In this direction, the numerical analysis of these algorithms (see~\cite{binev2011convergence, deVore2013})
proves that, algebraic (resp.\ exponential) decay to zero of the 
Kolmogorov $N$-width is asymptotically satisfied by the sequence of spaces \( \{ \mathcal{V}_n \} _n\) obtained by the weak-greedy algorithm, provided that the training sets $\mathcal{P}_M$ are suitably chosen. 
More precisely, let us introduce a comparable quantity as $d_{n}(\mathcal{S})$ associated to the series of spaces \( \{ \mathcal{V}_n \} _n\) defined in the greedy algorithm:
\begin{equation}
\Gamma_n(\mathcal{S}) = \sup_{\boldsymbol{\mu}\in \mathcal{P}_M}\inf_{v_N\in \mathcal V_N} \norm{\mathbf{u}_{\mathcal{N}}(\boldsymbol{\mu}) - v_N}_X.
\end{equation}
In particular, it holds that, there exists $\alpha >0$ such that for any $n\ge 1$, (at least in the limit $M\rightarrow \infty$)
\begin{itemize}
\item  If $d_n(\mathcal{S})\le C_0 n^{-\alpha}$, then $\Gamma_{n}(\mathcal{S}) \le C_1 n^{-\alpha}$ with $C_1= 2^{5\alpha+1}\gamma^{-2} C_0$ ,
\item if $d_n(\mathcal{S})\le C_0 e^{-c_0 n^\alpha}$, then $\Gamma_{n}(\mathcal{S}) \le\sqrt{2C_0} \gamma^{-1} e^{-c_1n^\alpha}$ where $c_1=2^{-1-2\alpha} c_0$
\end{itemize}
which is a very powerful result, at least whenever the decay of the Kolmogorov $N$-width is fast enough. 

In this paper, we are rather interested in cases where the Kolmogorov barrier is present and thus we do not have this exponential or geometric rate for  $d_n(\mathcal{S})$, we have to be more precise in this comparison, and use  the sharper statement also present in the above references (see also \cite{maday2020reduced}): for any $n\ge 1$, we have
$$ \Gamma_n(\mathcal{S})\le c \gamma^{-1} \min_{1\le m\le n}   d_m^{\frac{n-m}{n}}(\mathcal{S}).$$
In particular $\Gamma_{2n}(\mathcal{S}) \le \sqrt{2}\gamma^{-1} \sqrt{d_n (\mathcal{S})}$, for any $n \ge 1$. This latter bound greatly relativizes the optimality of the greedy method. In fact, it quickly becomes apparent (see \eg the Figure \ref{fig:bases-comparison} below) that the SVD method provides better approximation results than the greedy methods, at least for small values of $n$. Indeed, the optimal approaches of Kolmogorov and SVD use first basis functions that mix the (\ie, are linear combinations of many)  pure states given by the solutions $\mathbf{u}_{\mathcal{N}}(\boldsymbol{\mu})$ while the greedy methods are based on spaces spanned by pure states.

However, the optimal subspace achieving the Kolmogorov width remains unreachable in practice.  Indeed, the greedy algorithm selects one function per iteration, which only yields a \emph{step by step} update to the reduced space rather than performing a global optimization over all \( N \)-dimensional subspaces. The next section presents a way to get the best of the greedy and SVD approaches.

\subsection{GSS: Greedy-Sampled SVD}\label{GSS}
In this hybrid approach, we aim to leverage the strengths of both the Greedy algorithm and the SVD. The key idea is to use the Greedy algorithm to select a representative set of parameter-dependent high-fidelity solutions and then apply an SVD to these snapshots in order to extract an optimally compressed basis that mixes the pure states. This two-step strategy allows us to combine:
\begin{inparaenum}[(i)]
  \item the explorative sampling power of Greedy,
  \item the optimal compression property of SVD.
\end{inparaenum}

Let \( M_1 \) denote the cardinality of a fine training set \(\mathcal{P}_{\text{train}}\), and \( M_2 \) the number of Greedy iterations used to select the snapshots. From these \( M_2 \) snapshots, we apply SVD and retain the first \( N \) left singular vectors as the reduced basis. Typically, we have \(M_1\gg M_2 >  N\).  The full procedure is detailed in Algorithm~\ref{alg:greedypp}. It is important to note that when \( M_2 = N \), applying SVD to the Greedy-selected snapshots does not change the reduced space. In this case, both the classical Greedy method and the GSS yield the same \( N \)-dimensional subspace, even though the two methods produce \emph{different bases} for that space.  We recommend \( M_2 = 2 N \) as a rule of thumb.

\begin{algorithm}[H]
\caption{GSS}
\label{alg:greedypp}
\begin{algorithmic}[1]
\State \textbf{Input:} Training set \( \mathcal{P}_{\text{train}} \) of size \( M_1 \), number of Greedy iterations \( M_2 \), reduced basis dimension \( N < M_2 \), initial parameter \( \boldsymbol{\mu}^{(1)} \)
\State \textbf{Output:} Basis $\mathbf{V}\in \mathbb{R}^{\mathcal{N}\times N}$
\State \(\mathbf{W}= [\mathbf{u}_\mathcal{N}(\boldsymbol{\mu}^{(1)})/\left\| \mathbf{u}_\mathcal{N}(\boldsymbol{\mu}^{(1)})\right\|_M] \), \( \mathbf{S} = [\mathbf{u}_\mathcal{N}(\boldsymbol{\mu}^{(1)})] \)
\For{\( n = 2 \) to \( M_2 \)}
    \State Select \( \boldsymbol{\mu}^{(n)} \in \mathcal{P}_{\text{train}} \) such that:
    \[
    \boldsymbol{\mu}^{(n)} = \arg\max_{\boldsymbol{\mu} \in \mathcal{P}_{\text{train}}} \Delta_{n-1}(\boldsymbol{\mu})
    \]
    \State Compute \( \mathbf{s}_{n}=  \mathbf{u}_\mathcal{N}(\boldsymbol{\mu}^{(n)}) \)
    \State \(\boldsymbol{\xi}_n = \mathrm{GramSchmidt}\left(\mathbf{W},  \mathbf{s}_{n}, \mathbf{M}\right)\)
    \State  \(\mathbf{S}\gets [\mathbf{S},\, \mathbf{s}_{n}] \)
    \State  \( \mathbf{W} \gets [\mathbf{W},\,\boldsymbol{\xi}_n]\), used for computing \(\Delta_n(\boldsymbol{\mu})\)

\EndFor
\State Apply SVD/POD on \( \mathbf{S} \in \mathbb{R}^{\mathcal{N} \times M_2} \) and extract the first $N$ modes as the basis $\mathbf{V}.$
\end{algorithmic}
\end{algorithm}

\paragraph{Advantages over SVD/POD}  
As discussed in Sections~\ref{sec:svd} and~\ref{sec:greedy}, a limitation of standard SVD is the need to solve the high-fidelity model over a dense sampling of the parameter space, typically chosen a priori using heuristics (\eg, uniform grid or random sampling). This can lead to inefficient coverage of the parameter domain, especially when the manifold exhibits anisotropic behavior. 

In contrast, the GSS approach uses the error estimator to guide the sampling towards the most ``informative'' parameters, reducing the number of high-fidelity solves \(M_2 \ll M_1\) and increasing the relevance of the snapshots. By performing an SVD over these smartly chosen snapshots, the method benefits from both parameter adaptivity and optimal mode compression.

\paragraph{Advantages over Greedy}  
The GSS approach enriches the classical Greedy algorithm by introducing a post-processing compression step based on SVD, which provides an \emph{optimal low-rank approximation} of the selected snapshots, in the sense of~\cref{eq:svd_opt} for \( M = M_2 \).

As explained above, SVD modes are not just selected pure solutions but \emph{linear combinations of multiple solutions}, enabling the construction of "richer" basis functions. This distinction is particularly beneficial when every snapshot exhibits \emph{localized features}, as is often the case in transport-dominated problems. By combining several locally supported solutions, SVD tends to produce modes that better capture the global behavior of the system.

The GSS approach, therefore, enhances the classical Greedy method by introducing this additional compression and optimality step, without losing the parameter-adaptive strength of the greedy selection strategy. We refer to subsection \ref{GSS} for the illustration of these improvements.

\section{Nonlinear approximation}
\label{sec:nonlinear-approximation}
Even if care is taken to select the best possible  reduced basis, the span of which is as close as possible to the optimal spaces used in the definition of
Kolmogorov $N$-width, linear approximation techniques may suffer from the \emph{Kolmogorov barrier}, as discussed in Section~\ref{sec:linear-rb}. 
This is why we turn to \emph{nonlinear approximation} methods, which aim to mitigate this barrier and reduce the associated approximation complexity.

In this section, we first recall some nonlinear notions of width that generalize the classical Kolmogorov \( N\)-width to nonlinear settings. We then present a specific approximation technique, namely the \emph{Nonlinear Compressive Reduced Basis Approximation} (NCRBA) method \cite{barnett2023, pp10,  ballout2024nonlinear}, which is the main nonlinear method of interest in this work.

\subsection{Nonlinear widths}
\label{sec:nonlinear-widths}
Model order reduction techniques can be analyzed using the general paradigm of \emph{encoder–decoder} pairs. The goal is to approximate each $u \in \mathcal{S}\subset X$ by a low-dimensional representation using:

\begin{itemize}
    \item an \emph{encoder} $E : X \to \mathbb{R}^n$ mapping to a latent space,
    \item a \emph{decoder} $D : \mathbb{R}^n \to X$ reconstructing an approximation,
\end{itemize}

such that $u \approx D(E(u))$. The approximation quality is measured by the worst-case distortion:
\begin{equation}
\label{eq:recons_error}
    \mathcal{E}_n(\mathcal{S}; E, D) := \sup_{u \in \mathcal{S}} \| u - D(E(u)) \|_X.
\end{equation}
By minimizing this quantity over admissible encoder–decoder pairs, one obtains different notions of approximation widths.

If no restriction is imposed on $E$ and $D$ beyond continuity (or potentially Lipschitz continuity), we recover the \emph{nonlinear Kolmogorov $n$-width} defined in \cite{pp12} as:
\[
\delta_n(\mathcal{S}) := \inf_{\substack{E \in \mathcal{C}(\mathcal{S}, \mathbb{R}^n) \\ D \in \mathcal{C}(\mathbb{R}^n, X)}} \sup_{u \in \mathcal{S}} \| u - D(E(u)) \|_X.
\]
The smallest value of $n$ such that perfect reconstruction is possible defines \emph{the minimal latent dimension}:
\begin{equation}
\label{eq:nonlinear_dim}
    n_\delta(\mathcal{S}) := \min \left\{ n \in \mathbb{N} \;\middle|\; \delta_n(\mathcal{S}) = 0 \right\}.
\end{equation}

In~\cite{pp5}, the authors establish that for a solution manifold $\mathcal{S}$ defined as the image of a \emph{Lipschitz} parameter-to-solution map \( u : \vmu \mapsto u(\vmu) \), with a compact parameter set \(\mathcal{P} \subset \mathbb{R}^p\), the minimal latent dimension satisfies:
\begin{equation}
\label{eq:nonlinear_dim_bounds}
p \le n_\delta(\mathcal{S}) \le 2p + 1.
\end{equation}
Moreover, if the map \( u :  \mathcal{P}\longrightarrow \mathcal{S}
\) is \emph{continuous and injective}, then it defines a homeomorphism between \(\mathcal{P}\) and \(\mathcal{S}\), and in that case \( n_\delta(\mathcal{S}) = p \) (see also 
\cite{pesenson2016}).

This framework provides a nonlinear generalization of the (linear) \emph{Kolmogorov \( n \)-width}, see \cref{eq:kolmogorov_width}, where the decoder $D$ is restricted to be linear and $E$ is the orthogonal projection onto an $n$-dimensional subspace $\mathcal{V}_n \subset X$.

In this work, we are particularly interested in an intermediate notion between the fully linear and fully nonlinear frameworks, namely the \emph{sensing numbers}. In this setting, the encoder is restricted to be linear---defined by \(n\) continuous linear forms of the dual space $X'$: $E = (\ell_1,\dots, \ell_n)$, while the decoder is allowed to be nonlinear. The corresponding width is defined as:
\begin{equation}
\label{eq:sensing_numbers}
    s_n(\mathcal{S}) := \inf_{\substack{D \in \mathcal{C}(\mathbb{R}^n, X) \\ \ell_1, \dots, \ell_n \in X'}} \sup_{u \in \mathcal{S}} \left\| u - D\left( \ell_1(u), \dots, \ell_n(u) \right) \right\|_X.
\end{equation}
This last notion retains the simplicity and interpretability of linear encoders, while benefiting from the expressive power of nonlinear decoders. As such, sensing numbers provide a promising compromise between classical linear approaches, governed by $d_n(\mathcal{S})$, and fully nonlinear reductions associated with $\delta_n(\mathcal{S})$.

By construction, the following inequalities hold:
\begin{equation}
\label{eq:width_comparison}
    \delta_n(\mathcal{S}) \le s_n(\mathcal{S}) \le d_n(\mathcal{S}),
\end{equation}
which implies, for the associated minimal latent dimensions:
\begin{equation}
\label{eq:dim_lower_bound}
    p \leq n_\delta(\mathcal{S}) \leq n_s(\mathcal{S}),
\end{equation}
where \( n_s(\mathcal{S}) \) is defined as in~\cref{eq:nonlinear_dim} \emph{mutatis mutandis}. However, unlike the case of \( n_\delta(\mathcal{S}) \) for which upper bounds are available (see~\cref{eq:nonlinear_dim_bounds}), no general upper bound is currently known for \( n_s(\mathcal{S}) \).

In~\cite{ballout2024nonlinear}, it was shown numerically for a specific test case that \( n_s(\mathcal{S}) = p \) using a linear encoder based on projection onto SVD modes. In the present work, we rigorously establish that, at least \textbf{locally} and under suitable regularity assumptions, the minimal dimension satisfies \( n_s(\mathcal{S}) = p \). This shows that the optimal encoding dimension is attainable with linear encoders in a neighborhood of the solution manifold.

\subsection{Nonlinear Compressive Reduced Basis Approximation}
\label{sec:nlcrbm}

The underlying idea in the NCRBA method is that, although the representation proposed by the linear approximation method (SVD, Greedy, GSS) to provide a good approximation is too large, it is nevertheless optimal in a certain sense. Based on this, the approximation consists of a sum 
$$u(\boldsymbol{\mu}) \simeq u_N(\boldsymbol{\mu}) = \sum_{k=1}^N \alpha_k(\boldsymbol{\mu}) \varphi_k$$ where, in the ``vanilla'' version of the reduced basis method the coefficients  $\alpha_k(\boldsymbol{\mu})$ for $k=1,\dots, N$ are the solutions to a system of $N$ equations derived by a Galerkin-type approximation associated with the PDE in \cref{eq:1} :
\begin{equation}\label{eq:GalRB}
  P_{{\mathcal V}_N}\Bigl( {\mathfrak R}(u_N(\boldsymbol{\mu});\boldsymbol{\mu})\Bigr)=0,
\end{equation}

In the context where the complexity of $\cS$, linked to the complexity of the set of parameters, is lower, the NCRBA suggests that only a few coefficients are true unknowns (degrees of freedom) and that the other coefficients necessary for the accuracy of the approximation are given functions of these degrees of freedom. More precisely, the first modes, from $1$ to $n<N$, are the degrees of freedom, from which the remaining ones can be recovered. This means that  we base our approach on a linear encoder (being the projection of the function on the first $n$ reduced modes) and a nonlinear decoder that reads in a statement
$$ \forall j= n+1,\dots, N, \quad  \alpha_j(\boldsymbol{\mu}) = \psi^j(\alpha_1(\boldsymbol{\mu}), \alpha   _2(\boldsymbol{\mu}), ..., \alpha_n(\boldsymbol{\mu})),$$ where $\psi^j$ are some feature maps that have to be suitably determined to represent well the large modes. This  results in a problem of the form : Find $u^{n,N}_{\boldsymbol{\mu}}$
 in ${\mathcal V}_N$ that is written as
\begin{equation}\label{eq:CRBsol}
   u^{n,N}_{\boldsymbol{\mu}} = \sum_{i=1}^n \alpha_{i,\boldsymbol{\mu}} \varphi_i +  \sum_{j=n+1}^N  \psi^{j}(\alpha_{1,\boldsymbol{\mu}},\ldots,\alpha_{n,\boldsymbol{\mu}})
\varphi_j,
\end{equation}  such that
\begin{equation}\label{eq:CRBeq}
  \Pi_n\Bigl( {\mathfrak R}(u^{n,N}_{\boldsymbol{\mu}};\boldsymbol{\mu})\Bigr)=0,
\end{equation} 
where $\Pi_n$ is some rank $n$ operator. 

In this equation, the degrees of freedom are the $n$ values $\{\alpha_{i,\boldsymbol{\mu}}\}_{i=1,\dots,n}$, and $\psi^{j}(\{\alpha_{i,\boldsymbol{\mu}}\}_{i=1,\dots,n})$ are the $j=n+1,\dots, N$ reduced basis coefficients, which are functions of the degrees of freedom.
In our case, we choose a Galerkin approximation and choose $ \Pi_n$ to be the Galerkin projection on $\mathcal{V}_n$ (an alternative, better choice could be obtained by minimizing the residual instead).
Problem \eqref{eq:CRBsol}-\eqref{eq:CRBeq} being nonlinear, iterative methods are usually suited to solve it.
Since we are considering small problems, we can use a Picard iteration procedure, as suggested below :
\begin{equation}
  \sum_{i=1}^n \alpha_{i,\boldsymbol{\mu}}^{k+1} \varphi_i  =
  \sum_{i=1}^n \alpha_{i,\boldsymbol{\mu}}^{k} \varphi_i
  - \gamma_k \Pi_n\Bigl( {\mathfrak R}(u^k;\boldsymbol{\mu})\Bigr),
\end{equation}
where $\gamma_k$ is a sequence of parameters indexed by $k$, and where we leave all dependencies in $\mu, n$ and $N$, and 
\begin{equation}
     u^k = \sum_{i=1}^n \alpha_{i,\boldsymbol{\mu}}^k \varphi_i +  \sum_{j=n+1}^N  \psi^{j}(\{\alpha_{i,\boldsymbol{\mu}}^k\}_{i=1,\dots,n})
\varphi_j.
\end{equation}

For a larger problem, a Newton or quasi-Newton method should be implemented. This important issue, both in terms of numerical efficiency and in terms of the error associated with the approximate solution,  will be addressed in future work specifically devoted to it.

\medskip

In what follows, we shall investigate different ways to express the $\psi^{j}$, $j=n+1,\dots, N$ as a function of the $\alpha_{i,\boldsymbol{\mu}}$, $j=1,\dots,n$, together with the proper choice for $n\le N$. This can be $i$) a quadratic expression, $i_1$) homogeneous (as in \cite{barnett2022quadratic, geelen2023operator,greedy_quad}),  
or $i_2$) non homogeneous, $ii$) a polynomial expression \cite{geelen2024learning}, or $iii$) finally a more general, learnable expression, as is proposed e.g. in \cite{pp10, barnett2023,  bensalah2025nonlinear, de2025nonlinear}.

\section{Approximation with Taylor polynomial expansions}
\label{sec:taylor}

In this section, we work in an abstract setting where  ${\mathcal S}\subset X = \mathbb{R}^{\mathcal{M}} $ is an arbitrary embedded manifold. In particular, ${\mathcal S}$ need not be associated with any parametrized PDE. In addition, the subsequent theory is developed independently of any discretization. This allows us to clarify the statements regardless of the properties of the map that to every parameter $\boldsymbol{\mu} \in {\mathcal P}$ associates the solution to the PDE. The purpose of this section is to show that the SVD analysis allows to determine the dimension $q$ of the manifold ${\mathcal S}$  and that there is a bijection between (i) the subspace spanned by the $q$ first modes obtained from the SVD analysis of the elements sampled in the manifold ${\mathcal S}$ and (ii) the tangent space to ${\mathcal S}$. In a vein similar to our contribution -- though focused on a different application -- this paper \cite{tyagi2013tangent} offers particularly interesting insights. 

A consequence of our analysis is that: locally, around each point on the manifold ${\mathcal S}$, the minimal dimension $n_s(\mathcal S)=q$. In particular, the approximation \eqref{eq:CRBsol} is valid, asymptotically with $n = q$ and the functions $\psi^{j}$, for $j=q+1,\dots,q+\frac{q(q+1)}{2}$  are (close to) quadratic in the $q$ first coefficients.  In addition, assuming that the manifold is of class ${\mathcal C}^\infty$,  the approximation will improve by increasing the number of modes after the $q+\frac{q(q+1)}{2}$ first ones and increasing the degree of the polynomial approximation. Note, however, that this proof holds only asymptotically: for a wider variation of the parameters, we have to go beyond polynomial representation for the $\psi^{j}$, $j$ larger than $n = q$.

\begin{remark}
Throughout this section, the matrix norm $\|\cdot\|$ stands for any sub-multiplicative and unitarily invariant norm. 
In particular, both the spectral norm $\|\cdot\|_2$ and the Frobenius norm $\|\cdot\|_F$ 
satisfy the required properties.
\end{remark}

\subsection{ Notations and Assumptions}

Consider a manifold ${\mathcal S}$ of finite dimension $q$ (the precise value of which is not known), embedded in $\mathbb{R}^{\mathcal{M}}$ (with dimension $ {\mathcal{M}}\gg q$).
Let $ \mathbf{u}^* $ be a point in this manifold; by definition, it serves as the support of a local map denoted as  $ U: {\mathcal Q}^0 \subset \mathbb{R}^q \longrightarrow \mathbb{R}^{\mathcal{M}}$ (the inverse of a chart) that locally parametrizes $ {\mathcal S} $, with $ {\mathcal Q}^0 $ being an open subset of $ \mathbb{R}^q $ that contains, say, the ball $ {\mathcal B}(\boldsymbol{0}, 1) $ in $ \mathbb{R}^q $, centered at $ \boldsymbol{0} \in \mathbb{R}^q  $ with radius $ 1 $, and such that $ U(\boldsymbol{0}) = \mathbf{u}^* $.
There exists a real number $ r^* > 0 $ such that each point $ \mathbf{u} \in {\mathcal S \cap {\mathcal B}_{\mathbb{R}^{\mathcal{M}}}(\mathbf{u}^*, r^*)} $ can be written as $ \mathbf{u} = U(\boldsymbol{\nu}) $ for some parameter $\boldsymbol{\nu}  = (\nu_j)_{j=1}^q \in {\mathcal{Q}}^0$. We assume that the manifold is smooth enough (we need ${\mathcal{C}^3}$ in what follows) such that the Jacobian (resp. the second fundamental form associated to the Hessian) of $ U $ at every point in $ {\mathcal Q}^0 $ is non-singular (resp. has maximal rank) and spans a tangent space of dimension $q$ (resp. a curvature space of dimension $\tfrac12 q(q+1)$).

\vspace{0.5cm}

To locally perform the singular value decomposition of the manifold $ {\mathcal S} $, we first select a discrete dataset $ \{  \boldsymbol{\nu}^i\}_{i=1}^M \in {\mathcal B}(\boldsymbol{0}, 1) $, with $ M $ sufficiently large to provide a locally adequate sampling of $ U $ in $ {\mathcal Q}^0 $, and thus a local sampling of $ {\mathcal S} $ around $ \mathbf{u}^* $, with corresponding points $ \{\mathbf{u}_i\}_{i=1}^M $, where for each $ i $, $ \mathbf{u}_i = U(\boldsymbol{\nu}^i) $. Next, for any $ r $, $ 0 < r \leq r^* $, we consider the family of sampling of $ {\mathcal S} $ around $ \mathbf{u}^* $, indexed by $r$, that converges to $ \mathbf{u}^* $ as $ r \to 0 $, defined by the set of discrete points $ \{\mathbf{u}^r_i\}_{i=1}^M $, where for each $ i $, $ \mathbf{u}^r_i = U(r \cdot \boldsymbol{\nu}^i) $. This set of points can be considered as a sampling of \(\mathcal{S}^r= \{  U(\boldsymbol{\nu}), \boldsymbol{\nu} \in \, {\mathcal B}(\boldsymbol{0}, r)\subset \mathbb{R}^q\} \subset \mathcal{S}\).
The values of \( M \) and \( r^* \) are not addressed in this discussion; refer to \cite{tyagi2013tangent} for detailed information.

\subsection{Tangent space of the centered manifold}
\label{sec:tangentSpace}

For the sake of simplification, let us now ``center'' the manifold at $ \mathbf{u}^* $ and thus consider, for any given $r $, $ 0 < r \leq r^* $, the family of discrete points $ \{\mathbf{v}^r_i = \mathbf{u}^r_i - \mathbf{u}^*\}_{i=1}^M $ on the shifted manifold $ {\mathcal S}^{*,r} := {\mathcal S}^r  - \mathbf{u}^*  = \{ \mathbf{v}= \mathbf{u}-\mathbf{u}^*, \mathbf{u}\in {\mathcal S}^r \}\subset \mathcal{S}^*:=\mathcal S  - \mathbf{u}^* $.
The tangent space $T_0^*= T_{\boldsymbol{0}} {\mathcal S}^* = T_{\boldsymbol{u}^*} {\mathcal S} $ at the origin is the vector space spanned by the partial derivatives of the parameterization functions $U(\boldsymbol{\nu})$. Formally, we have:
\[
T_0^* = \mathrm{span} \left( \frac{\partial U}{\partial \nu_1}(\boldsymbol{0}), \frac{\partial U}{\partial \nu_2}(\boldsymbol{0}), \dots, \frac{\partial U}{\partial \nu_q}(\boldsymbol{0}) \right),
\]
where the vectors $\frac{\partial U}{\partial \nu_j}(\boldsymbol{0})$  are the columns of the Jacobian matrix $\mathbf J_U(\boldsymbol{\nu}) \in \mathbb{R}^{\mathcal{M} \times q}$ (evaluated at $\boldsymbol{\nu}=\boldsymbol{0}$).

\subsubsection{Convergence of the first order modes}
As $r\to 0$, all points $ \mathbf{u}_i^r $ (resp. $ \mathbf{v}_i^r $ ) become arbitrarily close to each other on the manifold ${\mathcal S} $ (resp. ${\mathcal S}^* $) and converge to $\mathbf{u}^*$ (resp. $      \boldsymbol{0} $). At first-order, this gives, for any $i, 1\le i\le M$  :
\[
\mathbf{v}_i^r = \boldsymbol{0} + r \sum_{j=1}^q   \nu^i_j  \frac{\partial U}{\partial \nu_j}(\boldsymbol{0}) + \boldsymbol{\mathcal O}(r^2) \approx  r \sum_{j=1}^q  \nu^i_j \frac{\partial U}{\partial \nu_j}(\boldsymbol{0}) ,
\]
as $r\to 0$.

Let $\mathbf{S}^r= [\mathbf{v}^r_1, \mathbf{v}^r_2, \dots, \mathbf{v}^r_M]$ be a snapshot matrix of $ {\mathcal S}^{*,r}$. Using the above approximation for each $\mathbf{v}_i^r$, we have the following matrix approximation 
\begin{equation}
\label{eq:matrix_taylor_1}
    \mathbf{S}^r = \mathbf{U}^r \mathbf{\Sigma}^r \mathbf{Z}^{r \top} = r \mathbf A_0 + \mathcal{O}(r^2), \qquad
\mathbf A_0:=\mathbf J_U(\boldsymbol{0})\,\mathbf P,
\end{equation}
where $\mathbf{P}=[\boldsymbol{\nu}^1 , \boldsymbol{\nu}^2, \dots , \boldsymbol{\nu}^M]$ of size $q\times M$ and $\mathbf{J}_U(\boldsymbol{0})$ is the Jacobian of $U$ at $\boldsymbol{0}$.
By the assumption on the non-singularity of the Jacobian matrix, the rank of $\mathbf{J}_U(\boldsymbol{0})$ is equal to $q$. In addition, from the assumption made on the sampling of $\mathcal{S}$, the matrix $\mathbf{P}$ also has rank $q$ and consequently so does $\mathbf A_0$.
Therefore, for any small $r>0$, from the matrix relation $\frac 1r \mathbf{U}^r \mathbf{\Sigma}^r \mathbf{Z}^{r \top} \approx \mathbf{J}_U(\boldsymbol{0}) \, \mathbf{P}$ we deduce that (i) there is a clear gap in the size of the  singular values present in 
$\mathbf{\Sigma}^r$ between the $q^{th}$ and the $(q+1)^{th}$, which allows to determine the value of $q$ if it is not known, (ii) from the theory of Davis-Kahan applied to the POD version of the SVD, the subspace $\mathcal U^r_{1:q}$ spanned by the first $q$ left singular vectors of $\mathbf{S}^r$ converges in $\mathcal{O}(r)$ 
to the tangent space $T_0^*$.

The formalization of this convergence of the distance between two subspaces (or associated generating vectors or matrices) involves the notion of  
\emph{principal angles}, the definition of which is extensively recalled in Appendix~\ref{appendix:principal_angles}.

\begin{theorem}[Convergence to the tangent space]
\label{thm:tangent_convergence}
As $r \to 0$, the subspace $\mathcal U^r_{1:q}$ spanned by the first $q$ left singular vectors of $\mathbf{S}^r$
converges to the tangent space $T_0^*$ with rate $\mathcal{O}(r)$, i.e.
\[
\|\sin \mathbf \Theta\big( \mathcal U^r_{1:q},\, T_0^* \big)\| = \mathcal{O}(r).
\]
\end{theorem}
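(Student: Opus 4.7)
The plan is to combine a second-order Taylor expansion of the parametrization $U$ with the Wedin $\sin\Theta$ theorem for singular subspaces, using the fact that the unperturbed matrix $\mathbf{A}_0$ has rank exactly $q$ with a strictly positive $q$-th singular value.

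First, since $U$ is assumed $\mathcal{C}^3$ (in particular $\mathcal{C}^2$) on $\mathcal{Q}^0$, the Taylor expansion around $\boldsymbol{0}$ yields, uniformly for $i=1,\dots,M$ and $r\in(0,r^*]$,
\begin{equation*}
\mathbf{v}_i^r \;=\; U(r\boldsymbol{\nu}^i) - U(\boldsymbol{0}) \;=\; r\,\mathbf{J}_U(\boldsymbol{0})\,\boldsymbol{\nu}^i \;+\; r^2\,\mathbf{e}_i^r,
\end{equation*}
with $\|\mathbf{e}_i^r\|$ bounded by a constant depending only on $\sup_{\mathcal{B}(\boldsymbol{0},r^*)}\|\mathbf{H}_U\|$ and on $\|\boldsymbol{\nu}^i\|\le 1$. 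Stacking these $M$ columns gives the matrix identity
\begin{equation*}
\mathbf{S}^r \;=\; r\,\mathbf{A}_0 \;+\; r^2\,\mathbf{E}^r, \qquad \mathbf{A}_0 = \mathbf{J}_U(\boldsymbol{0})\,\mathbf{P},
\end{equation*}
where $\|\mathbf{E}^r\| \le C$ for some constant $C$ independent of $r$.

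Next, I would identify the target subspace with the leading singular subspace of the unperturbed matrix. Because $\mathbf{J}_U(\boldsymbol{0})\in\mathbb{R}^{\mathcal{M}\times q}$ is of full column rank $q$ and $\mathbf{P}\in\mathbb{R}^{q\times M}$ is also of rank $q$ by the sampling assumption, $\mathbf{A}_0$ has rank exactly $q$ and its column space coincides with $\mathrm{Range}(\mathbf{J}_U(\boldsymbol{0}))=T_0^*$. Consequently the subspace spanned by the first $q$ left singular vectors of $r\mathbf{A}_0$ equals $T_0^*$, and the strictly positive scalar $\sigma_{\min}:=\sigma_q(\mathbf{A}_0)>0$ plays the role of a spectral gap, since $\sigma_{q+1}(\mathbf{A}_0)=0$.

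The core step is then an application of Wedin's $\sin\Theta$ theorem (recalled in Appendix~\ref{appendix:principal_angles}) to the pair $(r\mathbf{A}_0,\,\mathbf{S}^r)$. The perturbation has norm $\|\mathbf{S}^r - r\mathbf{A}_0\| = r^2\|\mathbf{E}^r\|\le Cr^2$, while the residual gap can be bounded below by Weyl's inequality:
\begin{equation*}
\sigma_q(r\mathbf{A}_0) - \sigma_{q+1}(\mathbf{S}^r) \;\ge\; r\,\sigma_{\min} - C r^2 \;\ge\; \tfrac{1}{2}\,r\,\sigma_{\min},
\end{equation*}
provided $r$ is small enough (namely $r\le \sigma_{\min}/(2C)$). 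Wedin's inequality then gives
\begin{equation*}
\bigl\|\sin\mathbf{\Theta}\!\bigl(\mathcal{U}^r_{1:q},\,T_0^*\bigr)\bigr\|
\;\le\; \frac{\|\mathbf{S}^r - r\mathbf{A}_0\|}{\sigma_q(r\mathbf{A}_0) - \sigma_{q+1}(\mathbf{S}^r)}
\;\le\; \frac{Cr^2}{\tfrac12 r\,\sigma_{\min}} \;=\; \frac{2C}{\sigma_{\min}}\,r,
\end{equation*}
which is the claimed $\mathcal{O}(r)$ estimate.

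The main obstacle is ensuring a quantitative lower bound on the spectral gap of $\mathbf{A}_0$: the two standing assumptions (non-singular Jacobian at $\boldsymbol{0}$, and a sampling $\{\boldsymbol{\nu}^i\}$ such that $\mathbf{P}$ has rank $q$) together force $\sigma_q(\mathbf{A}_0)\ge \sigma_q(\mathbf{J}_U(\boldsymbol{0}))\,\sigma_q(\mathbf{P})>0$, which is precisely what is needed to absorb the quadratic perturbation into the linear gap and close the argument. The remainder of the calculation is a routine application of Weyl/Wedin, with the norm understood as any sub-multiplicative unitarily invariant norm as stated in the preceding remark.
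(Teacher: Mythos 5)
Your proof is correct and delivers the claimed $\mathcal{O}(r)$ rate, but it takes a genuinely different route from the paper's. You perturb the rectangular snapshot matrix $\mathbf{S}^r$ directly about $r\mathbf{A}_0$ and invoke Wedin's $\sin\mathbf\Theta$ theorem for singular subspaces, whereas the paper passes to the symmetric scaled Gram matrices $\mathbf{C}_r := r^{-2}\mathbf{S}^r(\mathbf{S}^r)^\top$ and $\mathbf{C}_0 := \mathbf{A}_0\mathbf{A}_0^\top$, shows $\|\mathbf{C}_r-\mathbf{C}_0\|=\mathcal{O}(r)$, and applies the Davis--Kahan $\sin\mathbf\Theta$ theorem to that symmetric pair with the eigenvalue gap $\delta_q=\lambda_q(\mathbf{C}_0)>0$. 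Both arguments rest on the same ingredients — the first-order Taylor expansion $\mathbf{S}^r=r\mathbf{A}_0+\mathcal{O}(r^2)$, the rank-$q$ structure of $\mathbf{A}_0=\mathbf{J}_U(\boldsymbol{0})\mathbf{P}$ forced by the non-degeneracy of the Jacobian and the sampling, and a spectral gap that scales like $r$ (or $r^2$ after squaring) — and give the same rate. The one caveat for your route is that Appendix~\ref{appendix:principal_angles} recalls only the symmetric Davis--Kahan statement and explicitly defers Wedin's rectangular extension as beyond its scope, so you would need to import the Wedin statement (e.g.\ Wedin 1972 or Stewart--Sun) with the precise constant and gap definition, which vary slightly between references; your choice of gap $\sigma_q(r\mathbf{A}_0)-\sigma_{q+1}(\mathbf{S}^r)$, lower-bounded via Weyl, is compatible with the standard form. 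In exchange your approach avoids forming the Gram matrix and yields a clean explicit constant $2C/\sigma_{\min}$ with $\sigma_{\min}\ge \sigma_q(\mathbf{J}_U(\boldsymbol{0}))\,\sigma_q(\mathbf{P})>0$; the paper's Gram route stays entirely inside the tools it formally states.
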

\begin{proof}
Let us define the scaled Gram matrices
\[
\mathbf{C}_r := \frac{1}{r^2}\,\mathbf{S}^r (\mathbf{S}^r)^{\!\top}, 
\qquad 
\mathbf{C}_0 := \mathbf A_0 \mathbf A_0^{\!\top}.
\]
Then, from the first-order expansion \eqref{eq:matrix_taylor_1}, we derive
\[
\mathbf{C}_r 
= \big(\mathbf A_0 + \mathcal{O}(r)\big)
   \big(\mathbf A_0 + \mathcal{O}(r)\big)^{\!\top}
= \mathbf{C}_0 + \mathcal{O}(r),
\]
so that $\|\mathbf{C}_r - \mathbf{C}_0\| = \mathcal{O}(r)$.
The matrix $\mathbf{C}_0$ has rank $q$, with $q$ positive eigenvalues separated from $0$ by 
a fixed spectral gap $\delta_q=\lambda_q(\mathbf{C}_0) > 0$ independent of $r$. Provided $r$ is sufficiently small such that $\|\mathbf{C}_r - \mathbf{C}_0\| < \delta_q$,
the Davis--Kahan $\sin\Theta$ theorem yields
\[
\|\sin \mathbf \Theta(\mathbf{U}_{(1)}^r,\, \mathbf{U}_{(1)}^*)\|
\;\le\;
\frac{2 \|\mathbf{C}_r - \mathbf{C}_0\|}{\delta_q}
= \mathcal{O}(r),
\]
where
\begin{equation}  \label{U^5,U^*}
    \mathbf{U}_{(1)}^r := \mathbf{U}^r_{(:,1:q)}= \begin{bmatrix}\boldsymbol{\varphi}_1^r, & \cdots & ,\boldsymbol{\varphi}_q^r\end{bmatrix}, 
\qquad
 \mathbf{U}_{(1)}^* := \mathbf{U}^*_{(:,1:q)} = \begin{bmatrix}\boldsymbol{\varphi}_1^*, & \cdots & ,\boldsymbol{\varphi}_q^*\end{bmatrix},
\end{equation}
and the columns \(\boldsymbol{\varphi}_j^r\) and \(\boldsymbol{\varphi}_j^*\) denote the first \(q\) left singular vectors of 
\(\mathbf{S}^r\) and of \(\mathbf{J}_U(\boldsymbol{0})\mathbf{P}\), respectively. 
Equivalently, they are the eigenvectors associated with the \(q\) largest eigenvalues of the Gram matrices 
\(\mathbf{C}_r\) and \(\mathbf{C}_0\). Since  $\mathrm{Range}(\mathbf{U}_{(1)}^r) = \mathcal U^r_{1:q}$ and $\mathrm{Range}(\mathbf{U}_{(1)}^*) = T_0^*$, the claim follows.
\end{proof}

However, the convergence of the individual SVD modes and thus the SVD basis is more delicate and not immediate:  
the (norm 1) singular vectors are not uniquely defined, one can only guarantee convergence up to an orthogonal transformation, i.e., for each sufficiently small $r>0$, there exists an orthogonal matrix \(\mathbf{O}_{(1)}^r \in O(q)\), where $O(q)$ stands for the orthogonal group, such that
\begin{equation}
\label{eq:basis_convergence}
\|\mathbf{U}_{(1)}^r - \mathbf{U}_{(1)}^*\mathbf{O}_{(1)}^r\| = \mathcal{O}(r).
\end{equation}
The precise construction of \(\mathbf{O}_{(1)}^r\) and the discussion of the role of 
simple versus multiple singular values are deferred to 
Appendix~\ref{appendix:svd_modes_convergence}.

\subsubsection{Convergence of the coefficients}
\label{sec:coeff}

With the notations above, the ``reduced basis'' like approximation of $U(\boldsymbol{\nu})$ reads as follows
\begin{equation}\label{eqn:15}
U(\boldsymbol{\nu}) \approx \mathbf{u}^* + \sum_{j=1}^q \alpha_j^r (\boldsymbol{\nu}) \boldsymbol{\varphi}^{r}_j = \mathbf{u}^* +\mathbf{U}_{(1)}^r \,\boldsymbol{\alpha}_{(1)}^r(\boldsymbol{\nu}),
\end{equation}
where 
\begin{equation}\label{eq:aaa1}
\boldsymbol{\alpha}_{(1)}^r(\boldsymbol{\nu}):= \left( \alpha_1^r(\boldsymbol{\nu}), \dots, \alpha_q^r(\boldsymbol{\nu}) \right)
=  \mathbf{U}_{(1)}^{r\top}\mathbf{v}(\boldsymbol{\nu}),
\end{equation}
is the coordinate vector of $\mathbf{v}(\boldsymbol{\nu})$ in the basis  $\mathbf{U}_{(1)}^r$. 
We then introduce the projection coefficients onto the limiting tangent basis 
$\mathbf{U}_{1:q}^*$:
\begin{equation}\label{eq:aaa2}
\boldsymbol{\alpha}_{(1)}^*(\boldsymbol{\nu})
:= \mathbf{U}_{(1)}^{*\top} \mathbf{v}(\boldsymbol{\nu}).
\end{equation}
From the assumed regularity on $U$, we observe that the first-order Taylor expansion is
\begin{equation}
\label{eq:taylor1}
U(\boldsymbol{\nu}) =  \mathbf{u}^* +  \sum_{j=1}^q \nu_j \frac{\partial U}{\partial \nu_j}(\boldsymbol{0}) +\; \mathcal{R}_2(\boldsymbol{\nu}) = \mathbf{u}^* + \mathbf{J}_U(\boldsymbol{0})\, \boldsymbol{\nu} +\; \mathcal{R}_2(\boldsymbol{\nu}),
\qquad
\|\mathcal{R}_2(\boldsymbol{\nu})\|= \mathcal{O}(\|\boldsymbol{\nu}\|^2).
\end{equation}
Then, we express the (centered) Taylor expansion in the basis of the $q$ SVD modes $\{\boldsymbol{\varphi}^{*}_j\}_{j=1,\dots,q}$, there exist $q$ linear forms $\beta_j^*$ (expressing the change of basis set) in $\boldsymbol{\nu}$ such that 
\begin{equation}
    \label{eq:change_of_basis}
    \sum_{j=1}^q \nu_j \frac{\partial U}{\partial \nu_j}(\boldsymbol{0})  =  \sum_{j=1}^q \beta_j^* (\boldsymbol{\nu}) \boldsymbol{\varphi}^{*}_j.
\end{equation}
Let $\mathbf{A}^*$ 
be the change-of-basis matrix between the tangent basis $\{\tfrac{\partial U}{\partial \nu_j}(\boldsymbol{0})\}_{j=1}^q$ and  the orthonormal basis $\{\boldsymbol{\varphi}_j^*\}_{j=1}^q$.
From \cref{eq:change_of_basis}, one verifies that:
\begin{equation}
\label{eq:transition_matrix}
    \mathbf{A}^* \boldsymbol{\nu}= \boldsymbol{\beta}_{(1)}^*(\boldsymbol{\nu}) \text{, and }
    \mathbf{A}^* = \mathbf{U}_{(1)}^{*\top}\mathbf{J}_U(\boldsymbol{0}),
\end{equation}
where  $\boldsymbol{\beta}_{(1)}^*(\boldsymbol{\nu}):= \left( \beta_1^*(\boldsymbol{\nu}), \dots, \beta_q^*(\boldsymbol{\nu}) \right)$.
In addition, the basis convergence result \cref{eq:basis_convergence} implies:
\begin{equation}
\label{eq:basis_convergence2}
    \mathbf{U}_{(1)}^{r\top} = (\mathbf{O}_{(1)}^r)^\top \mathbf{U}_{(1)}^{*\top} + \mathbf{E}^r, 
\qquad \|\mathbf{E}^r\| = \mathcal{O}(r).
\end{equation}
We now state the result on the convergence of the coefficients.

\begin{prop}[Convergence of the coefficients]
\label{prop:coeff_convergence}
For all $\boldsymbol{\nu}\in \mathcal{B}(\boldsymbol{0},r)$, the coefficients $\boldsymbol{\alpha}_{(1)}^r(\boldsymbol{\nu})$  of the approximation of $U(\boldsymbol{\nu})$  in the basis  $\mathbf{U}_{(1)}^r$ satisfy
\begin{equation}
\label{eq:coeff_convergence}
\boldsymbol{\alpha}_{(1)}^r(\boldsymbol{\nu}) 
= (\mathbf{O}_{(1)}^r)^\top\boldsymbol{\alpha}_{(1)}^*(\boldsymbol{\nu}) + \mathbf{e}_1^r= (\mathbf{O}_{(1)}^r)^\top\boldsymbol{\beta}_{(1)}^*(\boldsymbol{\nu}) + \mathbf{e}_1^r + \mathbf{e}_2^r = (\mathbf{O}_{(1)}^r)^\top \mathbf{A}^* \boldsymbol{\nu} + \mathbf{e}_1^r + \mathbf{e}_2^r ,
\end{equation}
where $\mathbf{e}_1^r$ satisfies $\|\mathbf{e}_1^r\| = \mathcal{O}(r . \|\boldsymbol{\nu}\|)$ and $\|\mathbf{e}_2^r\| =\mathcal{O}(\|\boldsymbol{\nu}\|^2)$.
Reciprocally, we have
\begin{equation}
\label{eq:nu-vs-alpha1r}
\boldsymbol{\nu}=\mathbf B^r\boldsymbol{\alpha}_{(1)}^r(\boldsymbol{\nu})
+\mathcal O\big(r\|\boldsymbol{\alpha}_{(1)}^r(\boldsymbol{\nu})\|+\|\boldsymbol{\alpha}_{(1)}^r(\boldsymbol{\nu})\|^2\big).
\end{equation} where 
$\mathbf B^r:=(\mathbf{A}^*)^{-1}\mathbf{O}_{(1)}^r$.
\end{prop}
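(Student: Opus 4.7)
The plan is to derive all three equalities for $\boldsymbol{\alpha}_{(1)}^r(\boldsymbol{\nu})$ by successively applying the three ingredients already available: the definition \eqref{eq:aaa1}, the basis-perturbation identity \eqref{eq:basis_convergence2}, and the Taylor expansion \eqref{eq:taylor1} combined with the change-of-basis \eqref{eq:transition_matrix}. Each step produces one error term that I can bound immediately from the hypotheses; no analytical estimate beyond those is needed. The reciprocal formula \eqref{eq:nu-vs-alpha1r} will then follow by algebraic inversion, with an additional short argument comparing $\|\boldsymbol{\nu}\|$ and $\|\boldsymbol{\alpha}_{(1)}^r(\boldsymbol{\nu})\|$.

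First, I insert \eqref{eq:basis_convergence2} into the definition \eqref{eq:aaa1}:
\[
\boldsymbol{\alpha}_{(1)}^r(\boldsymbol{\nu})=\mathbf{U}_{(1)}^{r\top}\mathbf v(\boldsymbol{\nu})
=(\mathbf{O}_{(1)}^r)^\top\mathbf{U}_{(1)}^{*\top}\mathbf v(\boldsymbol{\nu})+\mathbf E^r\,\mathbf v(\boldsymbol{\nu})
=(\mathbf{O}_{(1)}^r)^\top\boldsymbol{\alpha}_{(1)}^*(\boldsymbol{\nu})+\mathbf e_1^r,
\]
with $\mathbf e_1^r:=\mathbf E^r\mathbf v(\boldsymbol{\nu})$. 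Since $U\in\mathcal C^3$, a first-order expansion gives $\|\mathbf v(\boldsymbol{\nu})\|\le C\|\boldsymbol{\nu}\|$ on $\mathcal B(\boldsymbol{0},r)$, and using submultiplicativity together with $\|\mathbf E^r\|=\mathcal O(r)$ yields the claimed $\|\mathbf e_1^r\|=\mathcal O(r\,\|\boldsymbol{\nu}\|)$. For the second equality, I substitute the Taylor expansion \eqref{eq:taylor1} into $\boldsymbol{\alpha}_{(1)}^*(\boldsymbol{\nu})$ and use \eqref{eq:transition_matrix}:
\[
\boldsymbol{\alpha}_{(1)}^*(\boldsymbol{\nu})=\mathbf{U}_{(1)}^{*\top}\bigl(\mathbf J_U(\boldsymbol 0)\boldsymbol{\nu}+\mathcal R_2(\boldsymbol{\nu})\bigr)
=\mathbf A^*\boldsymbol{\nu}+\mathbf{U}_{(1)}^{*\top}\mathcal R_2(\boldsymbol{\nu})
=\boldsymbol{\beta}_{(1)}^*(\boldsymbol{\nu})+\mathbf{U}_{(1)}^{*\top}\mathcal R_2(\boldsymbol{\nu}).
\]
Multiplying by $(\mathbf O_{(1)}^r)^\top$ (an isometry) defines $\mathbf e_2^r:=(\mathbf O_{(1)}^r)^\top\mathbf{U}_{(1)}^{*\top}\mathcal R_2(\boldsymbol{\nu})$, and the bound $\|\mathcal R_2(\boldsymbol{\nu})\|=\mathcal O(\|\boldsymbol{\nu}\|^2)$ together with $\|\mathbf{U}_{(1)}^{*\top}\|=1$ gives $\|\mathbf e_2^r\|=\mathcal O(\|\boldsymbol{\nu}\|^2)$. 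The third equality is then just \eqref{eq:transition_matrix}.

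For the reciprocal, the third equality can be rearranged into
\[
\mathbf A^*\boldsymbol{\nu}=\mathbf O_{(1)}^r\,\boldsymbol{\alpha}_{(1)}^r(\boldsymbol{\nu})-\mathbf O_{(1)}^r(\mathbf e_1^r+\mathbf e_2^r),
\]
and left-multiplication by $(\mathbf A^*)^{-1}$, which is well defined because $\mathbf J_U(\boldsymbol 0)$ has rank $q$ and $\mathbf{U}_{(1)}^*$ has orthonormal columns spanning the same range, yields the announced identity $\boldsymbol{\nu}=\mathbf B^r\boldsymbol{\alpha}_{(1)}^r(\boldsymbol{\nu})+\text{(error)}$ with $\mathbf B^r=(\mathbf A^*)^{-1}\mathbf O_{(1)}^r$. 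The only subtle point, and indeed the main obstacle, is converting the remainder, which is a priori bounded by $\mathcal O(r\|\boldsymbol{\nu}\|+\|\boldsymbol{\nu}\|^2)$, into one expressed in $\|\boldsymbol{\alpha}_{(1)}^r(\boldsymbol{\nu})\|$. For this, I use the direct estimate
\[
\sigma_{\min}(\mathbf A^*)\,\|\boldsymbol{\nu}\|\le\|\mathbf A^*\boldsymbol{\nu}\|
\le\|\boldsymbol{\alpha}_{(1)}^r(\boldsymbol{\nu})\|+\|\mathbf e_1^r\|+\|\mathbf e_2^r\|
\le\|\boldsymbol{\alpha}_{(1)}^r(\boldsymbol{\nu})\|+C(r+\|\boldsymbol{\nu}\|)\,\|\boldsymbol{\nu}\|,
\]
so that, for $r$ and $\|\boldsymbol{\nu}\|$ small enough, $\|\boldsymbol{\nu}\|\le C'\|\boldsymbol{\alpha}_{(1)}^r(\boldsymbol{\nu})\|$. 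Substituting this comparison back into the remainder finishes the proof. The only non-mechanical ingredients throughout are the non-singularity of $\mathbf A^*$ (inherited from the maximal-rank hypothesis on $\mathbf J_U(\boldsymbol 0)$) and the smallness of $r$ required to ensure that the linearization dominates both error terms.
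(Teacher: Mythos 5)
Your derivation of the three equalities in \eqref{eq:coeff_convergence} is identical to the paper's: insert \eqref{eq:basis_convergence2} into the definition \eqref{eq:aaa1} to read off $\mathbf{e}_1^r = \mathbf{E}^r\mathbf{v}(\boldsymbol{\nu})$, then substitute the Taylor expansion \eqref{eq:taylor1} together with the change of basis \eqref{eq:transition_matrix} to read off $\mathbf{e}_2^r = (\mathbf{O}_{(1)}^r)^\top \mathbf{U}_{(1)}^{*\top}\mathcal{R}_2(\boldsymbol{\nu})$, with the same bounds. The one place you diverge is the final comparison $\|\boldsymbol{\nu}\| = \mathcal{O}(\|\boldsymbol{\alpha}_{(1)}^r(\boldsymbol{\nu})\|)$ needed to re-express the remainder in \eqref{eq:nu-vs-alpha1r}. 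The paper obtains it by a forward reference to \Cref{lem:jacob_inverse} (uniform invertibility of the Jacobian of $\boldsymbol{\alpha}_{(1)}^r$), a result proved independently in the following subsection. You instead derive it in place via
\[
\sigma_{\min}(\mathbf{A}^*)\,\|\boldsymbol{\nu}\| \le \|\mathbf{A}^*\boldsymbol{\nu}\|
\le \|\boldsymbol{\alpha}_{(1)}^r(\boldsymbol{\nu})\| + C(r+\|\boldsymbol{\nu}\|)\|\boldsymbol{\nu}\|,
\]
absorbing the error term into the left-hand side for $r$ (and hence $\|\boldsymbol{\nu}\|$) small. That is a legitimate and slightly more self-contained route: it avoids a forward dependency on the Jacobian-inverse lemma and uses only the equalities already established, at the cost of making explicit the smallness condition on $r$ relative to $\sigma_{\min}(\mathbf{A}^*)$. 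Both arguments are correct; the paper's choice buys later reuse of \Cref{lem:jacob_inverse} (it is needed anyway for \Cref{prop:Ck_diffeo_ball}), while yours keeps this proposition logically closed.
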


The first equality in \eqref{eq:coeff_convergence} shows that, as $r \to 0$, the empirical coefficients 
$\boldsymbol{\alpha}_{(1)}^r(\boldsymbol{\nu})$ converge to the projection  coefficients
$\boldsymbol{\alpha}_{(1)}^*(\boldsymbol{\nu})$ on the tangent space, both being 
nonlinear functions of $\boldsymbol{\nu}$. 
The second equality expresses that, in a Taylor sense, 
$\boldsymbol{\alpha}_{(1)}^r$ and $\boldsymbol{\beta}_{(1)}^*$ share the same first-order 
linear behavior up to $\mathcal{O}(\|\boldsymbol{\nu}\|^2)$, 
and in particular they are linear to order $\mathcal{O}(r^2)$ 
for $\boldsymbol{\nu}\in\mathcal{B}(\boldsymbol{0},r)$.
\begin{proof}
From the definitions \eqref{eq:aaa1} of $\boldsymbol{\alpha}_{(1)}^r(\boldsymbol{\nu})$ and \eqref{eq:aaa2} of $\boldsymbol{\alpha}_{(1)}^*(\boldsymbol{\nu})$ together with the basis convergence~\eqref{eq:basis_convergence2}, we obtain
\[
\boldsymbol{\alpha}_{(1)}^r(\boldsymbol{\nu}) 
= (\mathbf{O}_{(1)}^r)^\top \mathbf{U}_{(1)}^{*\top}\mathbf{v}(\boldsymbol{\nu})
  + \mathbf{e}_1^r(\boldsymbol{\nu})
  = (\mathbf{O}_{(1)}^r)^\top \boldsymbol{\alpha}_{(1)}^*(\boldsymbol{\nu}) + \mathbf{e}_1^r(\boldsymbol{\nu}),
\]
where $ \mathbf{e}_1^r(\boldsymbol{\nu}):=\mathbf{E}^r \mathbf{v}(\boldsymbol{\nu})$.
Since $\|\mathbf{E}^r\| = \mathcal{O}(r)$ and $\|\mathbf{v}(\boldsymbol{\nu})\|= \mathcal O(\|\boldsymbol{\nu}\|)$ that results from the regularity assumption on U, we directly obtain the first equality.

Then, from the Taylor expansion in \cref{eq:taylor1} we write
\[
\boldsymbol{\alpha}_{(1)}^r(\boldsymbol{\nu}) 
= (\mathbf{O}_{(1)}^r)^\top \mathbf{U}_{(1)}^{*\top}\big(\mathbf{J}_U(\boldsymbol{0})\,\boldsymbol{\nu}+\mathcal{R}_2(\boldsymbol{\nu}) \big)
  + \mathbf{e}_1^r(\boldsymbol{\nu})
  =(\mathbf{O}_{(1)}^r)^\top\boldsymbol{\beta}_{(1)}^*(\boldsymbol{\nu})+ \mathbf{e}_1^r(\boldsymbol{\nu}) + \mathbf{e}_2^r(\boldsymbol{\nu}),
\]
with
$
\mathbf{e}_2^r(\boldsymbol{\nu})
:= (\mathbf{O}_{(1)}^r)^\top \mathbf{U}_{(1)}^{*\top}\mathcal{R}_2(\boldsymbol{\nu}) ,
$ that, again from \eqref{eq:taylor1}, satisfies the bound
$\|\mathbf{e}_2^r(\boldsymbol{\nu})\|
\le c_2\,\|\boldsymbol{\nu}\|^2
= \mathcal{O}(\|\boldsymbol{\nu}\|^2).
$  and ends the proof of \eqref{eq:coeff_convergence}.

Next, from \eqref{eq:transition_matrix}, we deduce that, uniformly on $\mathcal B(\boldsymbol{0},r)$,
\[
\boldsymbol{\alpha}_{(1)}^r(\boldsymbol{\nu}) = (\mathbf{O}_{(1)}^r)^\top \mathbf{A}^*\,\boldsymbol{\nu}
+\mathcal O\big(r\|\boldsymbol{\nu}\|+\|\boldsymbol{\nu}\|^2\big),
\]
hence
\[
\boldsymbol{\nu}=\mathbf B^r\boldsymbol{\alpha}_{(1)}^r(\boldsymbol{\nu})+\mathcal O\big(r\|\boldsymbol{\nu}\|+\|\boldsymbol{\nu}\|^2\big).
\]
Using $\|\boldsymbol{\nu}\|=\mathcal O(\|\boldsymbol{\alpha}_{(1)}^r(\boldsymbol{\nu})\|)$ on $\mathcal B(\boldsymbol{0},r)$ (uniform invertibility of the Jacobian,
see Lemma~\ref{lem:jacob_inverse}),
we obtain \eqref{eq:nu-vs-alpha1r}.

\end{proof}

\subsubsection{Local invertibility of the coefficient map}
\label{sec:bijection}
In Section~\ref{sec:coeff}, we have seen that the first $q$ coefficients $\boldsymbol{\alpha}_{(1)}^r(\boldsymbol{\nu})$  of the approximation of $U(\boldsymbol{\nu})$  in the SVD basis  $\mathbf{U}_{(1)}^r$ are certainly functions of $\boldsymbol{\nu}$, whose expression has no reason to be simple, however, they behave linearly in $\boldsymbol{\nu}$ in the asymptotic regime $r \to 0$.
In this section, we go further and establish a local bijection between these coefficients and the parameter $\boldsymbol{\nu}$. 
In other words, the mapping $\boldsymbol{\nu} \mapsto \boldsymbol{\alpha}_{(1)}^r(\boldsymbol{\nu})$ is locally invertible. 

\begin{lemma}
\label[lemma]{lem:jacob_inverse}
Let $\boldsymbol{\alpha}_{(1)}^r:\mathcal B(\boldsymbol{0},r)\to\mathbb{R}^q$ be the
coefficients  of the approximation of $U(\boldsymbol{\nu})$  in the SVD basis  $\mathbf{U}_{(1)}^r$ defined in \eqref{eq:aaa1}, then $\boldsymbol{\alpha}_{(1)}^r$ is differentiable and, for $r>0$ sufficiently small, its Jacobian $\mathbf{J}_{\boldsymbol{\alpha}_{(1)}^r}(\boldsymbol{\nu})$ is invertible
at every point $\boldsymbol{\nu}$ of the ball $\mathcal B(\boldsymbol{0},r)$ and there exists $K>0$ such that:
\[
\sup_{\|\boldsymbol{\nu}\|\le r}\,
\big\|\big(\mathbf{J}_{\boldsymbol{\alpha}_{(1)}^r}(\boldsymbol{\nu})\big)^{-1}\big\|
\ \le\ K.
\]
\end{lemma}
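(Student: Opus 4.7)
The plan is to compute the Jacobian explicitly, express it as a small perturbation of an invertible limit, and then invoke a standard Neumann-series argument to conclude uniform invertibility.

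First, I would note that differentiability of $\boldsymbol{\alpha}_{(1)}^r$ is immediate: since $U\in\mathcal{C}^3$, the map $\boldsymbol{\nu}\mapsto \mathbf{v}(\boldsymbol{\nu})=U(\boldsymbol{\nu})-\mathbf{u}^*$ is $\mathcal{C}^3$, and $\boldsymbol{\alpha}_{(1)}^r(\boldsymbol{\nu})=\mathbf{U}_{(1)}^{r\top}\mathbf{v}(\boldsymbol{\nu})$ is the composition with a linear map. Its Jacobian is therefore
\[
\mathbf{J}_{\boldsymbol{\alpha}_{(1)}^r}(\boldsymbol{\nu})
=\mathbf{U}_{(1)}^{r\top}\,\mathbf{J}_U(\boldsymbol{\nu})\ \in\ \mathbb{R}^{q\times q}.
\]

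Second, I would identify the limit object. By $\mathcal{C}^2$ smoothness of $U$, for $\boldsymbol{\nu}\in\mathcal{B}(\boldsymbol{0},r)$ one has $\mathbf{J}_U(\boldsymbol{\nu})=\mathbf{J}_U(\boldsymbol{0})+\mathcal{O}(r)$. Combining this with the basis convergence~\eqref{eq:basis_convergence2}, namely $\mathbf{U}_{(1)}^{r\top}=(\mathbf{O}_{(1)}^r)^\top \mathbf{U}_{(1)}^{*\top}+\mathbf{E}^r$ with $\|\mathbf{E}^r\|=\mathcal{O}(r)$, and recalling \eqref{eq:transition_matrix} that defines $\mathbf{A}^*=\mathbf{U}_{(1)}^{*\top}\mathbf{J}_U(\boldsymbol{0})$, I obtain the uniform expansion
\[
\mathbf{J}_{\boldsymbol{\alpha}_{(1)}^r}(\boldsymbol{\nu})
=(\mathbf{O}_{(1)}^r)^\top \mathbf{A}^*\;+\;\mathbf{R}^r(\boldsymbol{\nu}),
\qquad \sup_{\|\boldsymbol{\nu}\|\le r}\|\mathbf{R}^r(\boldsymbol{\nu})\|=\mathcal{O}(r).
\]

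Third, I would argue that the leading matrix is invertible with inverse uniformly bounded in $r$. Since $\mathbf{J}_U(\boldsymbol{0})\in\mathbb{R}^{\mathcal{M}\times q}$ has full column rank $q$ and its range equals $T_0^*=\mathrm{Range}(\mathbf{U}_{(1)}^*)$, the $q\times q$ matrix $\mathbf{A}^*=\mathbf{U}_{(1)}^{*\top}\mathbf{J}_U(\boldsymbol{0})$ is the matrix of the change of basis within $T_0^*$ between the tangent vectors $\{\partial U/\partial\nu_j(\boldsymbol{0})\}_j$ and the orthonormal basis $\{\boldsymbol{\varphi}_j^*\}_j$; it is therefore nonsingular, with $\|(\mathbf{A}^*)^{-1}\|$ an $r$-independent constant. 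Because $\mathbf{O}_{(1)}^r\in O(q)$ is an isometry, $\|((\mathbf{O}_{(1)}^r)^\top \mathbf{A}^*)^{-1}\|=\|(\mathbf{A}^*)^{-1}\|$, independently of $r$.

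Finally, I would close the argument by perturbation. Writing $\mathbf{J}_{\boldsymbol{\alpha}_{(1)}^r}(\boldsymbol{\nu})=(\mathbf{O}_{(1)}^r)^\top\mathbf{A}^*\bigl(\mathbf{I}_q+((\mathbf{O}_{(1)}^r)^\top\mathbf{A}^*)^{-1}\mathbf{R}^r(\boldsymbol{\nu})\bigr)$ and choosing $r$ small enough so that $\|(\mathbf{A}^*)^{-1}\|\cdot\sup_{\|\boldsymbol{\nu}\|\le r}\|\mathbf{R}^r(\boldsymbol{\nu})\|\le \tfrac12$, the Neumann series yields invertibility together with the uniform bound $\|\mathbf{J}_{\boldsymbol{\alpha}_{(1)}^r}(\boldsymbol{\nu})^{-1}\|\le 2\|(\mathbf{A}^*)^{-1}\|=:K$ for all $\boldsymbol{\nu}\in\mathcal{B}(\boldsymbol{0},r)$, as claimed.

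The only mildly delicate point is the uniformity of the remainder $\mathbf{R}^r(\boldsymbol{\nu})$ in $\boldsymbol{\nu}$: this is where $\mathcal{C}^2$ (rather than $\mathcal{C}^1$) regularity of $U$ is used, so that $\mathbf{J}_U(\boldsymbol{\nu})-\mathbf{J}_U(\boldsymbol{0})=\mathcal{O}(\|\boldsymbol{\nu}\|)=\mathcal{O}(r)$ uniformly on the ball; everything else is straightforward linear algebra and a perturbation estimate.
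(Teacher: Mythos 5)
Your proof is correct and follows essentially the same route as the paper's: compute the Jacobian $\mathbf{U}_{(1)}^{r\top}\mathbf{J}_U(\boldsymbol{\nu})$, write it as $(\mathbf{O}_{(1)}^r)^\top\mathbf{A}^*$ plus an $\mathcal{O}(r)$ remainder using the basis-alignment estimate~\eqref{eq:basis_convergence2} and the local regularity of $\mathbf{J}_U$, then close by a Neumann-series perturbation argument to obtain $K=2\|(\mathbf{A}^*)^{-1}\|$. The only microscopic difference is that the paper invokes mere continuity of $\mathbf{J}_U$ at $\boldsymbol{0}$ (so the remainder is bounded by a modulus $\omega_1(r)\to 0$), whereas you use $\mathcal{C}^2$ regularity to get an explicit $\mathcal{O}(r)$ rate; both are available under the paper's standing smoothness assumption.
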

\begin{proof}
For any $j=1,\dots, q$, the gradient of \( \alpha_j^r(\boldsymbol{\nu}) \) is given by:
\[
\nabla_{\boldsymbol{\nu}}\alpha_j^r(\boldsymbol{\nu}) =  \nabla_{\boldsymbol{\nu}} \Big(\left\langle U(\boldsymbol{\nu}) - \mathbf{u}^*, \boldsymbol{\varphi}_j^r \right\rangle \Big)
= \mathbf{J}_U(\boldsymbol{\nu})^\top \boldsymbol{\varphi}_j^r.
\]
Hence, the Jacobian matrix of \( \boldsymbol{\alpha}_{(1)}^r \) at \( \boldsymbol{\nu} \) reads:
\[
\mathbf{J}_{\boldsymbol{\alpha}_{(1)}^r}(\boldsymbol{\nu}) =
\begin{bmatrix}
\nabla_{\boldsymbol{\nu}} \alpha_1^r(\boldsymbol{\nu})^\top \\
\vdots \\
\nabla_{\boldsymbol{\nu}} \alpha_q^r(\boldsymbol{\nu})^\top
\end{bmatrix}
= \mathbf{U}_{(1)}^{r\top} \mathbf{J}_U(\boldsymbol{\nu}).
\]
Defining  $\mathbf{A}_r^* := (\mathbf{O}_{(1)}^r)^\top \mathbf{A}^*$, which, like the change of basis $\mathbf{A}^*$, is invertible, and using the basis convergence result~\eqref{eq:basis_convergence2}, we have:
\[
\mathbf{J}_{\boldsymbol{\alpha}_{(1)}^r}(\boldsymbol{\nu})-\mathbf A_r^*
= \mathbf{U}_{(1)}^{r\top}\big(\mathbf{J}_U(\boldsymbol{\nu})-\mathbf{J}_U(\boldsymbol{0})\big)
  + \mathbf E^r\,\mathbf{J}_U(\boldsymbol{0}).
\]
Taking the supremum over $\|\boldsymbol{\nu}\|\le r$, for any unitarily invariant norm we obtain
\[
\sup_{\|\boldsymbol{\nu}\|\le r}\big\|\mathbf{J}_{\boldsymbol{\alpha}_{(1)}^r}(\boldsymbol{\nu})-\mathbf A_r^*\big\|
\ \le\ \omega_1(r) + C_1 r,
\]
where $\omega_1(r):=\sup_{\|\boldsymbol{\nu}\|\le r}\|\mathbf{J}_U(\boldsymbol{\nu})-\mathbf{J}_U(\boldsymbol{0})\|\to 0$ as $r\to 0$ by continuity of $\mathbf{J}_U$ at $\boldsymbol{0}$,
and $C_1>0$ is independent of $r$.
Choose $r>0$ sufficiently small so that
\[
\theta_r:=\|(\mathbf A_r^*)^{-1}\|\;\sup_{\|\boldsymbol{\nu}\|\le r}\big\|\mathbf{J}_{\boldsymbol{\alpha}_{(1)}^r}(\boldsymbol{\nu})-\mathbf A_r^*\big\|\leq \|(\mathbf A^*)^{-1} \|\ (\omega_1(r) + C_1 r)  < 1,
\]
then $\mathbf{I} + (\mathbf A_r^*)^{-1}\big(\mathbf{J}_{\boldsymbol{\alpha}_{(1)}^r}(\boldsymbol{\nu})-\mathbf A_r^*\big) = (\mathbf A_r^*)^{-1}\mathbf{J}_{\boldsymbol{\alpha}_{(1)}^r}$ reads as a perturbation of identity that is invertible with an inverse that can be written as a convergent Neumann series with norm upper bounded by $(1-\theta_r)^{-1}$.

For every $\boldsymbol{\nu}\in\mathcal B(\boldsymbol{0},r)$,
\[
\mathbf{J}_{\boldsymbol{\alpha}_{(1)}^r}(\boldsymbol{\nu})
= \mathbf A_r^*\Big(\mathbf{I} + (\mathbf A_r^*)^{-1}\big(\mathbf{J}_{\boldsymbol{\alpha}_{(1)}^r}(\boldsymbol{\nu})-\mathbf A_r^*\big)\Big)
\]
is thus invertible and satisfies the following uniform bound
\[
\big\|\big(\mathbf{J}_{\boldsymbol{\alpha}_{(1)}^r}(\boldsymbol{\nu})\big)^{-1}\big\|
=\Big\|\Big(\mathbf{I} + (\mathbf A_r^*)^{-1}\big(\mathbf{J}_{\boldsymbol{\alpha}_{(1)}^r}(\boldsymbol{\nu})-\mathbf A_r^*\big)\Big)^{-1}\Big\|\,
\big\|(\mathbf A_r^*)^{-1}\big\|
\le (1-\theta_r)^{-1} \|(\mathbf A_r^*)^{-1}\|.
\]
Since $\|(\mathbf A_r^*)^{-1}\|=\|(\mathbf A^*)^{-1}\|$ (unitarily invariance) and, for $r$ small enough, $\theta_r<\tfrac12$, we obtain the explicit uniform estimate, valid for all $\boldsymbol{\nu}\in\mathcal B(\boldsymbol{0},r)$ :
\[
\big\|\big(\mathbf{J}_{\boldsymbol{\alpha}_{(1)}^r}(\boldsymbol{\nu})\big)^{-1}\big\|
\le (1-\theta_r)^{-1} \|(\mathbf A^*)^{-1}\|
\le 2\,\|(\mathbf A^*)^{-1}\|,
\]
which ends the proof by setting $K:= 2\,\|(\mathbf A^*)^{-1}\| $.
\end{proof}

At this level we can state that, for any $r$ small enough, there exists $r'>0$, with $r'\le r$ such that
the map $\boldsymbol{\alpha}_{(1)}^r$ 
is a $C^k$–diffeomorphism from
$\mathcal B(\boldsymbol{0},r') $ onto its image. In order to prove that the results remain 
true for $r'=r$, we need some global property, i.e., prove that
\begin{lemma}
\label[lemma]{lem:alpha_injective}
There exists $r_0>0$ such that for all $0<r<r_0$, the map
$\ \boldsymbol{\alpha}_{(1)}^r:\mathcal B(\boldsymbol{0},r)\to\mathbb{R}^q\ $
is injective.
\end{lemma}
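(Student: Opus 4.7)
The plan is to upgrade the local diffeomorphism property (guaranteed by Lemma \ref{lem:jacob_inverse}) to global injectivity on $\mathcal B(\boldsymbol{0},r)$ by exploiting the \emph{convexity} of the ball together with the uniform perturbative decomposition of the Jacobian already available from the proof of that lemma. The key identity is the fundamental theorem of calculus along the segment: for any two points $\boldsymbol{\nu}_1,\boldsymbol{\nu}_2\in\mathcal B(\boldsymbol{0},r)$, setting $\boldsymbol{\nu}(t):=\boldsymbol{\nu}_2+t(\boldsymbol{\nu}_1-\boldsymbol{\nu}_2)$,
\[
\boldsymbol{\alpha}_{(1)}^r(\boldsymbol{\nu}_1)-\boldsymbol{\alpha}_{(1)}^r(\boldsymbol{\nu}_2)
=\Bigl(\int_0^1\mathbf{J}_{\boldsymbol{\alpha}_{(1)}^r}(\boldsymbol{\nu}(t))\,dt\Bigr)(\boldsymbol{\nu}_1-\boldsymbol{\nu}_2),
\]
so injectivity follows as soon as the averaged Jacobian on the right is invertible, uniformly in the choice of segment inside $\mathcal B(\boldsymbol{0},r)$.

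The second step is to control that averaged Jacobian using the decomposition already established in the proof of Lemma~\ref{lem:jacob_inverse}: $\mathbf{J}_{\boldsymbol{\alpha}_{(1)}^r}(\boldsymbol{\nu})=\mathbf{A}_r^*+\mathbf{R}_r(\boldsymbol{\nu})$ with
\[
\sup_{\|\boldsymbol{\nu}\|\le r}\|\mathbf{R}_r(\boldsymbol{\nu})\|\ \le\ \omega_1(r)+C_1 r,
\qquad \omega_1(r)\xrightarrow[r\to 0]{}0.
\]
Since $\mathcal B(\boldsymbol{0},r)$ is convex, the entire segment $\boldsymbol{\nu}(t)$ lies in the ball, and integrating gives $\int_0^1\mathbf{J}_{\boldsymbol{\alpha}_{(1)}^r}(\boldsymbol{\nu}(t))\,dt=\mathbf{A}_r^*+\bar{\mathbf{R}}_r$ with the same bound $\|\bar{\mathbf{R}}_r\|\le \omega_1(r)+C_1 r$. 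Because $\|(\mathbf A_r^*)^{-1}\|=\|(\mathbf A^*)^{-1}\|$ is independent of $r$, choosing $r_0$ so small that $\|(\mathbf A^*)^{-1}\|(\omega_1(r)+C_1 r)<1$ for all $r<r_0$ makes $\mathbf{I}+(\mathbf A_r^*)^{-1}\bar{\mathbf{R}}_r$ a convergent Neumann perturbation of the identity, hence invertible, whence the averaged Jacobian itself is invertible.

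The conclusion is then immediate: if $\boldsymbol{\alpha}_{(1)}^r(\boldsymbol{\nu}_1)=\boldsymbol{\alpha}_{(1)}^r(\boldsymbol{\nu}_2)$, multiplying the displayed identity on the left by the inverse of the averaged Jacobian yields $\boldsymbol{\nu}_1=\boldsymbol{\nu}_2$, which proves injectivity on the whole $\mathcal B(\boldsymbol{0},r)$. Combined with Lemma~\ref{lem:jacob_inverse} and the inverse function theorem, $\boldsymbol{\alpha}_{(1)}^r$ is actually a $C^k$–diffeomorphism from $\mathcal B(\boldsymbol{0},r)$ onto its image for all $r<r_0$.

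I do not anticipate a genuine obstacle here: the quantitative estimate of Lemma~\ref{lem:jacob_inverse} was precisely engineered to be \emph{uniform} over $\|\boldsymbol{\nu}\|\le r$, which is exactly what is needed to control the integrated Jacobian along arbitrary chords. The only mild point to keep in mind is that one must invoke \emph{convexity} of $\mathcal B(\boldsymbol{0},r)$ (so the chord stays inside) and a \emph{sub-multiplicative, unitarily invariant} norm so that $\|(\mathbf A_r^*)^{-1}\|=\|(\mathbf A^*)^{-1}\|$; both are standing assumptions in the paper, so no further technology is required.
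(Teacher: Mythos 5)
Your argument is correct, and it rests on the same two pillars as the paper's: the fundamental theorem of calculus along a chord of the ball (together with convexity so the chord stays inside), and the uniform Jacobian control furnished by Lemma~\ref{lem:jacob_inverse}. Where you deviate is in how you close the estimate. The paper splits the FTC integral around the endpoint Jacobian,
\[
\int_0^1 \mathbf J_{\boldsymbol{\alpha}_{(1)}^r}(\gamma(t))\,dt
= \mathbf J_{\boldsymbol{\alpha}_{(1)}^r}(\boldsymbol{\nu}_2)
+ \int_0^1\!\big(\mathbf J_{\boldsymbol{\alpha}_{(1)}^r}(\gamma(t))-\mathbf J_{\boldsymbol{\alpha}_{(1)}^r}(\boldsymbol{\nu}_2)\big)\,dt,
\]
and then derives a quantitative lower Lipschitz bound
$\|\boldsymbol{\alpha}_{(1)}^r(\boldsymbol{\nu}_1)-\boldsymbol{\alpha}_{(1)}^r(\boldsymbol{\nu}_2)\|\ge (K^{-1}-\omega_2(r))\,\|\boldsymbol{\nu}_1-\boldsymbol{\nu}_2\|$,
using the inverse bound $K$ together with an oscillation modulus $\omega_2(r)$ of the Jacobian over the ball. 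You instead treat the averaged Jacobian as a perturbation of the constant matrix $\mathbf A_r^*$ and invoke a Neumann argument, re-using exactly the bound $\omega_1(r)+C_1 r$ already established inside the proof of Lemma~\ref{lem:jacob_inverse}. Both routes are valid and rest on the same ingredients; yours is slightly more economical in that it does not introduce the extra modulus $\omega_2(r)$, at the cost of not producing an explicit bi-Lipschitz constant (which the paper obtains en route but does not subsequently use). Either presentation proves the lemma.
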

\begin{proof}
From \Cref{lem:jacob_inverse} and the 
continuity of $\mathbf J_{\boldsymbol{\alpha}_{(1)}^r}$ at $\boldsymbol{0}$ we deduce that there exists $r_0 > 0$ small enough such that
\[\forall r, 0<r\le r_0,\quad 
\sup_{\|\boldsymbol{\nu}\|\le r}\big\|(\mathbf J_{\boldsymbol{\alpha}_{(1)}^r}(\boldsymbol{\nu}))^{-1}\big\|
\le K
\qquad\text{and}\qquad
\omega_2(r):=\sup_{\substack{\nu_1,\nu_2\in \mathcal B(\boldsymbol{0},r)}} 
\big\|\mathbf J_{\boldsymbol{\alpha}_{(1)}^r}(\nu_1)-\mathbf J_{\boldsymbol{\alpha}_{(1)}^r}(\nu_2)\big\| \le \tfrac{1}{2K}.
\]

Let $\boldsymbol{\nu}_1,\boldsymbol{\nu}_2\in\mathcal B(\boldsymbol{0},r)$ and set $h:=\boldsymbol{\nu}_1-\boldsymbol{\nu}_2$,
$\gamma(t):=\boldsymbol{\nu}_2+t h$ ($t\in[0,1]$). By the fundamental theorem of calculus and the chain rule,
\[
\boldsymbol{\alpha}_{(1)}^r(\boldsymbol{\nu}_1)-\boldsymbol{\alpha}_{(1)}^r(\boldsymbol{\nu}_2)
= \int_0^1 \mathbf J_{\boldsymbol{\alpha}_{(1)}^r}(\gamma(t))\,h\,dt
= \mathbf J_{\boldsymbol{\alpha}_{(1)}^r}(\boldsymbol{\nu}_2)\,h \;+\; \int_0^1\!\big(\mathbf J_{\boldsymbol{\alpha}_{(1)}^r}(\gamma(t))-\mathbf J_{\boldsymbol{\alpha}_{(1)}^r}(\boldsymbol{\nu}_2)\big)\,h\,dt.
\]
Taking norms and using the reverse triangle inequality yields
\[
\big\|\boldsymbol{\alpha}_{(1)}^r(\boldsymbol{\nu}_1)-\boldsymbol{\alpha}_{(1)}^r(\boldsymbol{\nu}_2)\big\|
\;\ge\;
\big\|\mathbf J_{\boldsymbol{\alpha}_{(1)}^r}(\boldsymbol{\nu}_2)h\big\|
\;-\;
\int_0^1 \big\|\mathbf J_{\boldsymbol{\alpha}_{(1)}^r}(\gamma(t))-\mathbf J_{\boldsymbol{\alpha}_{(1)}^r}(\boldsymbol{\nu}_2)\big\|\,\|h\|\,dt.
\]
By the uniform inverse bound, $\|\mathbf J_{\boldsymbol{\alpha}_{(1)}^r}(\boldsymbol{\nu}_2)h\|\ge K^{-1}\|h\|$; by the definition of $\omega_2(r)$, the integral term is at most $\omega_2(r)\|h\|$. Hence
\[
\big\|\boldsymbol{\alpha}_{(1)}^r(\boldsymbol{\nu}_1)-\boldsymbol{\alpha}_{(1)}^r(\boldsymbol{\nu}_2)\big\|
\;\ge\; \big(K^{-1}-\omega_2(r)\big)\,\|h\|.
\]
With $\omega_2(r)\le \tfrac{1}{2K}$, we get
\[
\big\|\boldsymbol{\alpha}_{(1)}^r(\boldsymbol{\nu}_1)-\boldsymbol{\alpha}_{(1)}^r(\boldsymbol{\nu}_2)\big\|
\;\ge\; \tfrac{1}{2K}\,\|\boldsymbol{\nu}_1-\boldsymbol{\nu}_2\|.
\]
Therefore $\boldsymbol{\alpha}_{(1)}^r(\boldsymbol{\nu}_1)=\boldsymbol{\alpha}_{(1)}^r(\boldsymbol{\nu}_2)$ implies $\boldsymbol{\nu}_1=\boldsymbol{\nu}_2$, \ie, $\boldsymbol{\alpha}_{(1)}^r$ is injective on $\mathcal B(\boldsymbol{0},r)$. 
\end{proof}

\begin{prop}[$C^k$–diffeomorphism on a neighborhood of the ball]
\label{prop:Ck_diffeo_ball}
Assume $U$ is of class $C^k$ on a  neighborhood $A$ of $\boldsymbol{0}$ for some $k$, $1\le k\le\infty$, then there exists $r_0>0$ such that, for every $0<r<r_0$, the map $\boldsymbol{\alpha}_{(1)}^r$ 
is a $C^k$–diffeomorphism from
$\mathcal B(\boldsymbol{0},r) $ onto its image  $ \boldsymbol{\alpha}_{(1)}^r(\mathcal B(\boldsymbol{0},r)) \subset \mathbb{R}^q
$. 
\end{prop}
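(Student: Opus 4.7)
The plan is to deduce the global $C^k$-diffeomorphism property from the two preceding lemmas by gluing together local diffeomorphisms supplied by the classical inverse function theorem. The three ingredients are: $C^k$ regularity of $\boldsymbol{\alpha}_{(1)}^r$, coming from the $C^k$ regularity of $U$ together with the representation $\boldsymbol{\alpha}_{(1)}^r(\boldsymbol{\nu}) = \mathbf{U}_{(1)}^{r\top}(U(\boldsymbol{\nu}) - \mathbf{u}^*)$; pointwise invertibility of $\mathbf{J}_{\boldsymbol{\alpha}_{(1)}^r}(\boldsymbol{\nu})$ at every $\boldsymbol{\nu}\in\mathcal{B}(\boldsymbol{0},r)$, which is \Cref{lem:jacob_inverse}; and global injectivity on $\mathcal{B}(\boldsymbol{0},r)$, which is \Cref{lem:alpha_injective}.

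First, I would pick $r_0>0$ small enough that both \Cref{lem:jacob_inverse} and \Cref{lem:alpha_injective} apply, and shrink $r_0$ further if necessary so that $\mathcal{B}(\boldsymbol{0},r_0)\subset A$. Fix $0<r<r_0$. Since $\boldsymbol{\alpha}_{(1)}^r$ is the composition of the $C^k$ map $U$ with the constant linear map $\mathbf{U}_{(1)}^{r\top}$, it is $C^k$ on $\mathcal{B}(\boldsymbol{0},r)$. For every $\boldsymbol{\nu}_0\in\mathcal{B}(\boldsymbol{0},r)$, \Cref{lem:jacob_inverse} ensures that $\mathbf{J}_{\boldsymbol{\alpha}_{(1)}^r}(\boldsymbol{\nu}_0)$ is invertible, and the classical inverse function theorem then yields an open neighborhood $W_{\boldsymbol{\nu}_0}\subset\mathcal{B}(\boldsymbol{0},r)$ of $\boldsymbol{\nu}_0$ and an open neighborhood $V_{\boldsymbol{\nu}_0}$ of $\boldsymbol{\alpha}_{(1)}^r(\boldsymbol{\nu}_0)$ such that $\boldsymbol{\alpha}_{(1)}^r$ restricts to a $C^k$-diffeomorphism from $W_{\boldsymbol{\nu}_0}$ onto $V_{\boldsymbol{\nu}_0}$. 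Simultaneously, this shows that the image $V := \boldsymbol{\alpha}_{(1)}^r\bigl(\mathcal{B}(\boldsymbol{0},r)\bigr)$ is open in $\mathbb{R}^q$, since each of its points lies inside some $V_{\boldsymbol{\nu}_0}\subset V$.

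Finally, I would invoke \Cref{lem:alpha_injective} to conclude that $\boldsymbol{\alpha}_{(1)}^r:\mathcal{B}(\boldsymbol{0},r)\to V$ is a bijection, hence admits a globally defined inverse $F$. On each $V_{\boldsymbol{\nu}_0}$, $F$ coincides by uniqueness with the $C^k$ local inverse produced by the inverse function theorem; covering $V$ by such neighborhoods, $F$ is $C^k$ on $V$, which yields the claimed global $C^k$-diffeomorphism.

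I do not expect a genuine obstacle in this concluding step: the analytical work has already been absorbed into \Cref{lem:jacob_inverse} and \Cref{lem:alpha_injective}. The point worth emphasizing is that the global injectivity provided by \Cref{lem:alpha_injective} is exactly what allows one to pass from a ``local $C^k$-diffeomorphism on some strictly smaller ball $r'<r$'' (which is what the inverse function theorem alone delivers) to a ``global $C^k$-diffeomorphism on the full ball of radius $r$'', closing precisely the gap flagged in the paragraph immediately preceding the statement.
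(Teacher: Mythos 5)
Your proposal is correct and follows essentially the same route as the paper: both rest on the same three ingredients ($C^k$ regularity via composition with a linear map, Jacobian invertibility from \Cref{lem:jacob_inverse}, and global injectivity from \Cref{lem:alpha_injective}). The only difference is cosmetic: the paper cites ``the global inverse function theorem'' as a black box, whereas you unpack it into its standard proof (local diffeomorphisms from the classical IFT, openness of the image, then gluing via global injectivity).
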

\begin{proof}
Each component $\alpha_j^r(\nu)=\langle U(\nu)-\mathbf{u}^*,\boldsymbol{\varphi}_j^r\rangle$ is the composition of $U \in C^k(A; \mathbb{R}^{\mathcal{M}})$ 
with a bounded linear functional, hence $\boldsymbol{\alpha}_{(1)}^r\in C^k(A; \mathbb{R}^q)$.
There exists $r_0>0$, small enough such that, for any $r\le r_0$, 
\begin{itemize}
    \item by Lemma~\ref{lem:jacob_inverse}, the Jacobian $\mathbf J_{\boldsymbol{\alpha}_{(1)}^r}(\nu)$ is invertible on $\mathcal B(\boldsymbol{0},r)$ with uniformly bounded inverse,
    \item by Lemma~\ref{lem:alpha_injective}, $\boldsymbol{\alpha}_{(1)}^r$ is injective on $\mathcal B(\boldsymbol{0},r)$.
\end{itemize}
Applying the global inverse function theorem,
we obtain that, for such values of $r\le r_0$, we have $\boldsymbol{\alpha}_{(1)}^r$ is a $C^k$–diffeomorphism from $\mathcal B(\boldsymbol{0},r)$
onto its image.
\end{proof}

\subsubsection{Sensing numbers}
The above proposition implies all the information on the elements $U(\boldsymbol{\nu})$ in the manifold $\mathcal{S}$, close enough to $\mathbf u^*$ is present in the $q$ first coefficients of its SVD decomposition, since we can recover $\boldsymbol{\nu}$ from $\boldsymbol{\alpha}_{(1)}^r (\boldsymbol{\nu}) $.

From the perspective of sensing numbers \eqref{eq:sensing_numbers}, this will translate into the conclusion that the minimal latent dimension, at least locally, satisfies
$n_s(\mathcal S^{r}) = q$. 
This property is particularly relevant in the context of parametric PDEs and the NCRBA, since it establishes the validity of the reduced approximation~\eqref{eq:CRBsol}, and that all the information is contained locally in the first $n=q(=p)$ coefficients.

\begin{prop}[Centered manifold: local latent dimension]
\label{prop:sensing_centered}
The centered manifold $\mathcal S^*$ has locally a latent (sensing) dimension equal to $q$ around $\mathbf{u}^*$, \ie, 
for sufficiently small $r>0$,
\[
n_s(\mathcal S^{*,r}) = q.
\]
\end{prop}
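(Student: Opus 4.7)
\textbf{Proof plan for Proposition~\ref{prop:sensing_centered}.}
The plan is to establish the equality $n_s(\mathcal S^{*,r}) = q$ by producing matching upper and lower bounds, using Proposition~\ref{prop:Ck_diffeo_ball} for the upper bound and the general chain of inequalities \eqref{eq:dim_lower_bound} together with the result of \cite{pp5} recalled after \eqref{eq:nonlinear_dim_bounds} for the lower bound.

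\textbf{Upper bound: $n_s(\mathcal S^{*,r})\le q$.} I would choose the $q$ linear sensing forms $\ell_j(v) = \langle v, \boldsymbol{\varphi}_j^r\rangle$ for $j=1,\dots,q$, so that the induced encoder sends $v = U(\boldsymbol{\nu}) - \mathbf{u}^* \in \mathcal S^{*,r}$ to $E(v) = \boldsymbol{\alpha}_{(1)}^r(\boldsymbol{\nu})$. Proposition~\ref{prop:Ck_diffeo_ball} guarantees that, for $r \le r_0$, the map $\boldsymbol{\nu} \mapsto \boldsymbol{\alpha}_{(1)}^r(\boldsymbol{\nu})$ is a $C^k$-diffeomorphism from $\mathcal B(\boldsymbol{0},r)$ onto its image $\Omega_r \subset \mathbb R^q$; in particular its inverse $(\boldsymbol{\alpha}_{(1)}^r)^{-1}$ is continuous on $\Omega_r$. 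I would then set $\tilde D(\boldsymbol{\alpha}) := U\bigl((\boldsymbol{\alpha}_{(1)}^r)^{-1}(\boldsymbol{\alpha})\bigr) - \mathbf{u}^*$ on $\Omega_r$, continuously extend it to a decoder $D: \mathbb R^q \to \mathbb R^{\mathcal M}$ (see the technical point below), and observe that $D(E(v)) = v$ for every $v \in \mathcal S^{*,r}$, so the reconstruction distortion $\mathcal E_q(\mathcal S^{*,r}; E, D) = 0$. This shows $s_q(\mathcal S^{*,r}) = 0$ and hence $n_s(\mathcal S^{*,r}) \le q$.

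\textbf{Lower bound: $n_s(\mathcal S^{*,r})\ge q$.} From \eqref{eq:dim_lower_bound}, $n_s(\mathcal S^{*,r}) \ge n_\delta(\mathcal S^{*,r})$. Since $\mathbf J_U(\boldsymbol{0})$ is assumed non-singular, the classical inverse function theorem ensures that, up to shrinking $r_0$, the parametrization $\boldsymbol{\nu} \mapsto U(\boldsymbol{\nu}) - \mathbf{u}^*$ is continuous and injective on $\mathcal B(\boldsymbol{0},r)$ for $r\le r_0$. Restricting to a closed ball $\overline{\mathcal B(\boldsymbol{0},r')}$ with $r' < r$, a continuous injection from a compact set is a homeomorphism onto its image, so the recalled result of \cite{pp5} yields $n_\delta\bigl(U(\overline{\mathcal B(\boldsymbol{0},r')}) - \mathbf{u}^*\bigr) = q$. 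Monotonicity of sensing and entropy widths under inclusion then gives $n_s(\mathcal S^{*,r}) \ge q$. Combining both bounds closes the proof.

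\textbf{Main technical point.} The only nontrivial step is the extension of the decoder to all of $\mathbb R^q$ demanded by \eqref{eq:sensing_numbers}: $(\boldsymbol{\alpha}_{(1)}^r)^{-1}$ is defined only on the open set $\Omega_r$, to which Tietze's theorem does not apply directly. I would bypass this by fixing any $r' < r$ and working on $\overline{\mathcal B(\boldsymbol{0},r')} \subset \mathcal B(\boldsymbol{0},r)$: its image under $\boldsymbol{\alpha}_{(1)}^r$ is compact, hence closed in $\mathbb R^q$, so Tietze's extension theorem applied componentwise produces a continuous decoder on $\mathbb R^q$ that achieves zero reconstruction error on $\mathcal S^{*,r'}$. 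Since the statement only requires the conclusion for $r$ sufficiently small, it suffices to relabel $r'$ as $r$; the equality $n_s(\mathcal S^{*,r}) = q$ thus holds on a non-degenerate range of small radii.
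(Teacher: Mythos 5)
Your proof is correct and follows essentially the same route as the paper's: the upper bound via the linear encoder $E=\mathbf{U}_{(1)}^{r\top}$, the diffeomorphism of Proposition~\ref{prop:Ck_diffeo_ball}, and a Tietze extension of the decoder; and the lower bound via \eqref{eq:dim_lower_bound} through the homeomorphism criterion of \cite{pp5}. Your extra care in the final technical point --- passing to a compact closed subball $\overline{\mathcal B(\boldsymbol{0},r')}$ before invoking Tietze --- is a sensible refinement, since the paper's version simply asserts that $\boldsymbol{\alpha}_{(1)}^r(\mathcal B(\boldsymbol{0},r))$ is compact without discussing whether the ball is taken open or closed.
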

\begin{proof}
For $r>0$ small enough, let $E:\mathbb{R}^{\mathcal{M}} \to\mathbb{R}^q$ be the linear encoder given by the projection onto the first $q$ SVD modes of the centered snapshot matrix $\mathbf{S}^r$:
\[
E:\ \mathbf{x} \mapsto 
\mathbf{U}_{(1)}^{r\top}\mathbf{x} .
\]
The continuity of $E$ is immediate, since it is an orthogonal projection operator. Now let 
\(
K^r := E(\mathcal{S}^{*,r}) = \boldsymbol{\alpha}_{(1)}^r\bigl(\mathcal{B}(\mathbf{0},r)\bigr),
\)
which is a compact subset of $\mathbb{R}^q$.  By Proposition~\ref{prop:Ck_diffeo_ball}, the coefficient map 
\[
\boldsymbol{\alpha}_{(1)}^r:\ \mathcal{B}(\mathbf{0},r)\;\to\; K^r
\]
is a $C^k$–diffeomorphism.  
We then define the restriction of the decoder $D$ on $K^r$ as
\[
D|_{K^r}:\ K^r \longrightarrow \mathbb{R}^{\mathcal M}, 
\qquad 
D|_{K^r} := \mathbf{v}\circ(\boldsymbol{\alpha}_{(1)}^r)^{-1},
\]
where $\mathbf{v}= U - \mathbf{u}^*$.  
One can further define a continuous decoder on the whole space $\mathbb{R}^q$ 
by continuously extending $D|_{K^r}$ to $\mathbb{R}^q$, this is guaranteed by the Tietze extension theorem.
Then, for all 
$\boldsymbol{\nu}\in \mathcal{B}(\mathbf{0},r)$,
\[
(D\circ E)\bigl(\mathbf{v}(\boldsymbol{\nu})\bigr)
= \mathbf{v}\bigl((\boldsymbol{\alpha}_{(1)}^r)^{-1}(\boldsymbol{\alpha}_{(1)}^r(\boldsymbol{\nu}))\bigr)
= \mathbf{v}(\boldsymbol{\nu}).
\]
Using this couple of Encoder-Decoder the reconstruction error in \cref{eq:recons_error} is zero with $n=q$, so 
$n_s(\mathcal{S}^{*,r})\le q$. Conversely, from \cref{eq:dim_lower_bound} $n_s(\mathcal{S}^{*,r})\ge q$. We conclude $n_s(\mathcal{S}^{*,r})=q$.
\end{proof}

\begin{lemma}[Translation invariance of sensing numbers]
\label[lemma]{lem:translation_invariance}
For any $\mathbf a\in X$ and any $\mathcal S\subset X$,
\[
n_s(\mathcal S+\mathbf a)=n_s(\mathcal S).
\]
\end{lemma}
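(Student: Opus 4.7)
The plan is to show the two-sided inequality $n_s(\mathcal S+\mathbf a)\le n_s(\mathcal S)$ and $n_s(\mathcal S)\le n_s(\mathcal S+\mathbf a)$ by transferring any admissible encoder--decoder pair across the translation, using the fact that only the decoder needs to absorb the constant shift.

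First I would fix $n\ge n_s(\mathcal S)$ and pick, for every $\varepsilon>0$, linear forms $\ell_1,\dots,\ell_n\in X'$ and a continuous decoder $D:\mathbb{R}^n\to X$ realizing
\[
\sup_{u\in\mathcal S}\big\|u-D(\ell_1(u),\dots,\ell_n(u))\big\|_X \le \varepsilon,
\]
which, when $n\ge n_s(\mathcal S)$ is attained by some pair with zero reconstruction error, exists by definition \eqref{eq:sensing_numbers}. The key point is that, for a shifted element $v=u+\mathbf a\in\mathcal S+\mathbf a$, applying the same linear form yields $\ell_i(v)=\ell_i(u)+\ell_i(\mathbf a)$, so the encoder $\tilde E=(\ell_1,\dots,\ell_n)$ remains \emph{linear} (not merely affine). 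I would then define the shifted decoder
\[
\tilde D:\mathbb{R}^n\to X,
\qquad
\tilde D(y_1,\dots,y_n):= D\bigl(y_1-\ell_1(\mathbf a),\dots,y_n-\ell_n(\mathbf a)\bigr)+\mathbf a,
\]
which is continuous as the composition of a translation in $\mathbb{R}^n$, the continuous decoder $D$, and a translation in $X$.

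Next, I would verify the reconstruction bound: for every $v=u+\mathbf a\in\mathcal S+\mathbf a$,
\[
v-\tilde D\bigl(\ell_1(v),\dots,\ell_n(v)\bigr)
=(u+\mathbf a)-\bigl(D(\ell_1(u),\dots,\ell_n(u))+\mathbf a\bigr)
=u-D(\ell_1(u),\dots,\ell_n(u)),
\]
so the worst-case error over $\mathcal S+\mathbf a$ is exactly the worst-case error over $\mathcal S$. This shows $s_n(\mathcal S+\mathbf a)\le s_n(\mathcal S)$ for every $n$, and taking $n=n_s(\mathcal S)$ with a zero-error pair yields $s_{n_s(\mathcal S)}(\mathcal S+\mathbf a)=0$, hence $n_s(\mathcal S+\mathbf a)\le n_s(\mathcal S)$.

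Finally I would apply the same construction to $\mathcal S+\mathbf a$ with translation vector $-\mathbf a$ to obtain the reverse inequality $n_s(\mathcal S)=n_s((\mathcal S+\mathbf a)+(-\mathbf a))\le n_s(\mathcal S+\mathbf a)$, concluding the equality. There is no serious obstacle here: the only mild subtlety worth flagging in the write-up is that the additive shift $\ell_i(\mathbf a)$ is kept inside the decoder rather than the encoder, which preserves the linearity constraint required by the definition of sensing numbers; the continuity of $\tilde D$ and the translation invariance of the norm $\|\cdot\|_X$ then do all the work.
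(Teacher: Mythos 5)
Your proof is correct and follows essentially the same route as the paper: keep the linear encoder unchanged, absorb the shift $\ell_i(\mathbf a)$ into a translated decoder $\tilde D(y)=D(y-E(\mathbf a))+\mathbf a$, observe that the pointwise reconstruction error is identical on $\mathcal S$ and $\mathcal S+\mathbf a$, then conclude by symmetry under the opposite translation. The paper phrases it as the slightly more general identity $s_n(\mathcal S+\mathbf a)=s_n(\mathcal S)$ for every $n$ via an $\varepsilon$-argument with target $s_n(\mathcal S)+\varepsilon$, whereas you specialize to $n\ge n_s(\mathcal S)$; also, when you say the infimum is ``attained by some pair with zero reconstruction error'' you are overclaiming (it need not be attained), but this is harmless because your $\varepsilon$-argument never uses attainment.
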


\begin{proof}
We first show that for each $n\in\mathbb N$,
\begin{equation}\label{eq:sn-equality}
s_n(\mathcal S+\mathbf a)=s_n(\mathcal S).
\end{equation}
For any fixed $\varepsilon>0$, by definition of $s_n$ we can choose an encoder--decoder pair $(E_\varepsilon,D_\varepsilon)$ with
$E_\varepsilon:X\to\mathbb R^n$ linear and $D_\varepsilon\in\mathcal C(\mathbb R^n,X)$ such that
\[
\sup_{\mathbf u\in\mathcal S}\,\|\mathbf u-D_\varepsilon(E_\varepsilon(\mathbf u))\|_X \le s_n(\mathcal S)+\varepsilon.
\]
Define a decoder $D_{{\mathbf a}, \varepsilon}:\mathbb R^n\to X$ by
\[
D_{{\mathbf a}, \varepsilon}(\mathbf x):=\mathbf a + D_\varepsilon\bigl(\mathbf x - E_\varepsilon(\mathbf a)\bigr),
\]
and keep the same linear encoder $E_\varepsilon$. For any $\mathbf v\in\mathcal S+\mathbf a$ we can write $\mathbf v=\mathbf u+\mathbf a$ with $\mathbf u\in\mathcal S$, and then
\[
D_{{\mathbf a}, \varepsilon}(E_\varepsilon(\mathbf v))
=\mathbf a + D_\varepsilon\bigl(E_\varepsilon(\mathbf u+\mathbf a)-E_\varepsilon(\mathbf a)\bigr)
=\mathbf a + D_\varepsilon\bigl(E_\varepsilon(\mathbf u)\bigr).
\]
Hence
\[
\|\mathbf v - D_{{\mathbf a}, \varepsilon}(E_\varepsilon(\mathbf v))\|_X
=\|\mathbf u+\mathbf a - (\mathbf a + D_\varepsilon(E_\varepsilon(\mathbf u)))\|_X
=\|\mathbf u - D_\varepsilon(E_\varepsilon(\mathbf u))\|_X.
\]
Taking the supremum over $\mathbf v\in\mathcal S+\mathbf a$ (equivalently $\mathbf u\in\mathcal S$) gives
\[
\sup_{\mathbf v\in\mathcal S+\mathbf a}\|\mathbf v - D_{{\mathbf a}, \varepsilon}(E_\varepsilon(\mathbf v))\|_X
=\sup_{\mathbf u\in\mathcal S}\|\mathbf u - D_\varepsilon(E_\varepsilon(\mathbf u))\|_X
\le s_n(\mathcal S)+\varepsilon,
\]
so $s_n(\mathcal S+\mathbf a)\le s_n(\mathcal S)+\varepsilon$. As $\varepsilon>0$ is arbitrary,
$s_n(\mathcal S+\mathbf a)\le s_n(\mathcal S)$.

Applying the same argument with translation by $-\mathbf a$ yields
$s_n(\mathcal S)\le s_n(\mathcal S+\mathbf a)$. Therefore \cref{eq:sn-equality} holds.

Finally, by definition $n_s(\mathcal S)=\min\{n:\ s_n(\mathcal S)=0\}$.
Using \cref{eq:sn-equality}, we conclude
$n_s(\mathcal S+\mathbf a)=n_s(\mathcal S)$.
\end{proof}

\begin{corollary}[Non-centered manifold: local latent dimension]
\label{cor:sensing_noncentered}
For sufficiently small $r>0$,
\[
n_s(\mathcal S^{r}) = q.
\]
\end{corollary}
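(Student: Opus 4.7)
The plan is to combine the two immediately preceding results. By construction, $\mathcal{S}^{*,r} = \mathcal{S}^r - \mathbf{u}^*$, so $\mathcal{S}^r$ is exactly the translate of the centered local manifold by $\mathbf{u}^* \in X$. Thus the corollary follows from invoking \Cref{lem:translation_invariance} with $\mathbf{a} = \mathbf{u}^*$ and $\mathcal{S} = \mathcal{S}^{*,r}$, which yields $n_s(\mathcal{S}^r) = n_s(\mathcal{S}^{*,r} + \mathbf{u}^*) = n_s(\mathcal{S}^{*,r})$, and then applying \Cref{prop:sensing_centered} to conclude $n_s(\mathcal{S}^{*,r}) = q$ for sufficiently small $r>0$.

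There is essentially no obstacle here: the entire content has already been absorbed into \Cref{prop:sensing_centered} (which does the local diffeomorphism/encoder--decoder construction) and \Cref{lem:translation_invariance} (which handles the shift by $\mathbf{u}^*$ by adapting the decoder through a translation of its argument while keeping the linear encoder unchanged). The only thing one might comment on for clarity is that the threshold $r_0$ controlling ``sufficiently small $r$'' is inherited directly from \Cref{prop:sensing_centered}, since translation invariance is valid without any smallness restriction.
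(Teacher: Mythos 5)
Your proof is correct and follows exactly the paper's own argument: invoke \Cref{lem:translation_invariance} with $\mathbf{a}=\mathbf{u}^*$ to reduce to the centered case, then apply \Cref{prop:sensing_centered}. Your closing remark that the threshold $r_0$ is inherited entirely from \Cref{prop:sensing_centered} is a correct and useful clarification.
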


\begin{proof}
By Proposition~\ref{prop:sensing_centered}, $n_s(\mathcal{S}^{*,r})=q$. 
Since $\mathcal S^{r} = \mathbf{u}^* + \mathcal{S}^{*,r}$, the translation invariance
Lemma~\ref{lem:translation_invariance} yields
\[
n_s(\mathcal S^{r}) = n_s(\mathbf{u}^*+\mathcal{S}^{*,r}) = n_s(\mathcal{S}^{*,r}) = q.
\]
\end{proof}

\begin{remark}
\begin{enumerate}
    \item \textbf{Non-continuity of $n_s$}.  
    The local result established in Corollary~\ref{cor:sensing_noncentered} cannot, in general, be extended to the whole manifold $\mathcal S$ by a mere continuity argument, since $n_s$ takes discrete values and would therefore have to remain constant on each connected component of $\mathcal S$.
 
    For instance, in \cite{pp12} it was shown that for the manifold
    \[
    \mathcal S = \{ \mathbf x \in \mathbb{R}^2 : \|\mathbf x\|=1 \},
    \]
    \ie\ the unit circle in $X=\mathbb{R}^2$, one has
    $n_\delta(\mathcal S) = 2 = q+1$.  
    In this example, Corollary~\ref{cor:sensing_noncentered} gives
    $n_s(\mathcal S^r)=q=1$ locally, while globally
    \[
    n_s(\mathcal S) \;\geq\; n_\delta(\mathcal S)=2.
    \]
    Hence, $n_s$ is not continuous in general.

    \item \textbf{Injectivity and sensing numbers.} Unlike for $n_{\delta}$,  the injectivity of the solution map is not sufficient to get $n_s = q$. Consider, for instance, the open arc of the unit circle
\[
\mathcal{S} = \bigl\{\, \mathbf{u}(\theta) = (\cos\theta,\, \sin\theta) \;:\; \theta \in (-\tfrac{3\pi}{4},\, \tfrac{3\pi}{4}) \,\bigr\} \subset \mathbb{R}^2.
\]
The mapping $\theta \mapsto \mathbf{u}(\theta)$ is injective, and the manifold $\mathcal{S}$ is smooth and one-dimensional.
However, there exists no nonzero linear form 
$\ell(x) = a^\top x$ such that $\ell$ restricted to $\mathcal{S}$ remains injective.
Indeed, for any direction $a = (\cos\varphi,\sin\varphi)$,
\[
(\ell \circ \mathbf{u})(\theta) = \cos(\theta - \varphi),
\]
which is not injective on any interval of length greater than $\pi$.
Hence, even though $\mathcal{S}$ is parametrized injectively by $\theta$, 
no one-dimensional linear projection can preserve this injectivity, and consequently, 
the sensing number satisfies
\[
n_s(\mathcal{S}) > 1.
\]
    
    \item \textbf{Non-centered case:} 
    Even though we proved that locally around $\mathbf{u}^*\in\mathcal S$ one has $n_s(\mathcal S^r)=q$, 
    the situation differs from the centered case (\ie\ working on $\mathcal{S}^{*,r} = \mathcal S^r-\mathbf{u}^*$).  
    In the non-centered setting, the first $q$ SVD modes of the snapshot matrix are not necessarily aligned with the tangent space at $\mathbf{u}^*$, and therefore do not guarantee a bijection between the coefficients and the parameters $\boldsymbol{\nu}$.  
    In fact, one additional mode (for a total of $q+1$) may be required even locally: the first singular mode converges to the direction of $\mathbf{u}^*$ itself, while the next $q$ modes, along with $\mathbf{u}^*$, allow us to generate the tangent affine space. 
    
    \item \textbf{Conclusion:} 
    From points 1 and 3 above, one can conclude that in either the non-local or the non-centered setting, at least $q+1$ SVD modes may be necessary to recover the full information carried by the manifold, \ie, the full parameter value $\boldsymbol{\nu}$.

\end{enumerate}
\end{remark}

\subsection{Second order Taylor expansion of the centered manifold}
\label{sec:taylor.order2}
In this section, we extend the first-order analysis of the local manifold representation in \cref{sec:tangentSpace} to include second-order effects.
Our goal is to characterize how the second-order empirical SVD modes and their associated projection coefficients behave. This higher-order analysis reveals the emergence of the curvature subspace in the SVD structure and a corresponding quadratic dependence in the reduced coordinates.

\subsubsection{Convergence of the filtered second-order SVD modes}
Under regularity assumptions on $U$, we can extend \cref{eq:matrix_taylor_1} to second order as:
\begin{equation}\label{eq:matrix_taylor_2}
\mathbf S^r \;=\; r\,\mathbf A_0 \;+\; \frac{r^2}{2}\,\mathbf Q_0 \;+\; \mathcal O(r^3).
\end{equation}
Let $\mathcal I:=\{(j,\ell)\,:\,1\le j\le \ell\le q\}$ and $m:=|\mathcal I|=\tfrac{q(q+1)}{2}$.
Define the quadratic feature matrix $\mathbf P_2\in\mathbb R^{m\times M}$ by
\[
\mathbf P_2
:=
\Big[\ \vecsym(\boldsymbol{\nu}^1\boldsymbol{\nu}^{1\top})\ ,\ \cdots\ , \vecsym(\boldsymbol{\nu}^j\boldsymbol{\nu}^{j\top})\ ,\ \cdots\ , \ \vecsym(\boldsymbol{\nu}^M\boldsymbol{\nu}^{M\top})\ \Big],
\]
where, for any symmetric matrix $\mathbf A=(a_{ij})$ of size $q$,  $\vecsym(\mathbf A)$ denotes the vector of size $m=\tfrac{q(q+1)}{2}$ containing the upper triangular part of $\mathbf A$ row by row, i.e. $\vecsym(\mathbf A):=(a_{11},a_{12},\cdots,a_{1q},a_{22},\cdots,a_{2q},\cdots, a_{qq})^T$.

Define the (symmetrized) quadratic Taylor basis matrix $\mathbf Q_{\mathrm{base}}\in\mathbb R^{\mathcal M\times m}$ by
\[
\mathbf Q_{\mathrm{base}}
:=\Big[\ (2-\delta_{j\ell})\,\partial_{\nu_j}\partial_{\nu_\ell}U(\boldsymbol{0})\ \Big]_{(j,\ell)\in\mathcal I},
\]
with the same column ordering as $\vecsym(\cdot)$.
Finally, define
\[
\mathbf Q_0:=\mathbf Q_{\mathrm{base}}\,\mathbf P_2\in\mathbb R^{\mathcal M\times M}.
\]
Equivalently, the $i$-th column of $\mathbf Q_0$, $i=1,\dots, M$, is $D^2U(\boldsymbol{0})[\boldsymbol{\nu}^i,\boldsymbol{\nu}^i]$.

Here, we focus on the second block of left singular vectors of $\mathbf S^r$, which captures the curvature effects of the manifold beyond the first SVD order. To isolate this second-order behavior, we perform the analysis on a projected snapshot matrix $\widehat{\mathbf S}^{\,r} \;:=\; (\mathbf I-\Pi_0)\,\mathbf S^r$, where $\Pi_0= \mathbf{U}_{(1)}^*  \mathbf{U}_{(1)}^{*\top}$ is the orthogonal projection onto the tangent space. Since $(\mathbf{I}-\Pi_0)\mathbf J_U(0)=0$, left-multiplying \cref{eq:matrix_taylor_2} by $(\mathbf{I}-\Pi_0)$ cancels the linear term 
(remember the definition of $\mathbf A_0$ in \eqref{eq:matrix_taylor_1}):
\[
\widehat{\mathbf S}^{\,r}
=(\mathbf I-\Pi_0)\mathbf S^r
=\frac{r^2}{2}\,(\mathbf I-\Pi_0)\mathbf Q_0+\mathcal O(r^3)
=\frac{r^2}{2}\,(\mathbf I-\Pi_0)\mathbf Q_{\mathrm{base}}\mathbf P_2+\mathcal O(r^3).
\]
By the assumption on the second fundamental form associated with the Hessian matrix, $(\mathbf I-\Pi_0)\mathbf Q_{\mathrm{base}}$ has (full) rank $m$. 
We further assume that the sampling is \emph{quadratically rich} in the sense that $\mathrm{rank}(\mathbf P_2)= m$ (in particular $M\ge m$),
so that $\mathrm{Range}\big((\mathbf I-\Pi_0)\mathbf Q_0\big)=\mathrm{Range}\big((\mathbf I-\Pi_0)\mathbf Q_{\mathrm{base}}\big)$. Next, we define
\begin{equation} \label{eqU2*}
 \widehat{\mathbf{U}}_{(2)}^r := \begin{bmatrix}\hat{\boldsymbol{\varphi}}_1^r, & \cdots & ,\hat{\boldsymbol{\varphi}}_m^r\end{bmatrix},\quad
 \mathbf{U}_{(2)}^* := \begin{bmatrix}\boldsymbol{\varphi}_{q+1}^* ,& \cdots & ,\boldsymbol{\varphi}_{q+m}^*\end{bmatrix},
\end{equation}
where \(\{\hat{\boldsymbol{\varphi}}_j^r\}_{j=1}^m\) and $\{\boldsymbol{\varphi}_{q+j}^*\}_{j=1}^m$ denote the first \(m\) singular vectors of $\widehat{\mathbf S}^{\,r}$ and of \((\mathbf I-\Pi_0) \mathbf{Q}_0\), respectively. The basis $\mathbf{U}_{(2)}^* \in \mathbb{R}^{\mathcal M \times m}$ spans the curvature subspace associated with the second-order variations of the manifold, defined as
\begin{equation}\label{curlC22}
 \mathcal C \;:=\; \mathrm{span}\Big\{(\mathbf I-\Pi_0)\,\partial_{\nu_j}\partial_{\nu_\ell}U(\boldsymbol{0})\;:\;1\le j\le \ell\le q\Big\}  = \mathrm{Range}(\mathbf{U}_{(2)}^*).   
\end{equation}

\begin{theorem}[filtered curvature block converges to the curvature subspace]
\label{thm:DK_curvature}
As $r \to 0$, the subspace $\widehat{\mathcal U}^{r (2)}_{1:m}$ spanned by the $m$ first  left singular vectors of $\widehat{\mathbf S}^{\,r}$
converges to the curvature subspace $\mathcal C$ with rate $\mathcal{O}(r)$, i.e.
\begin{equation}
\label{22KK10}
\big\|\sin \mathbf \Theta\big(\widehat{\mathcal U}^{r (2)}_{1:m},\,\mathcal C\big)\big\|
=\; \mathcal O(r).
\end{equation}

\end{theorem}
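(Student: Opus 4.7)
The plan is to reproduce, at the curvature level, the Davis--Kahan argument already used in the proof of \Cref{thm:tangent_convergence}, applied this time to the \emph{tangent--filtered} snapshot matrix $\widehat{\mathbf S}^{\,r}$. The starting point is the identity $(\mathbf I-\Pi_0)\mathbf J_U(\boldsymbol{0})=0$, which annihilates the linear term in the second-order expansion \eqref{eq:matrix_taylor_2} and yields
\[
\widehat{\mathbf S}^{\,r}
=\tfrac{r^2}{2}\,(\mathbf I-\Pi_0)\mathbf Q_0 \;+\; \mathbf R^r,
\qquad \|\mathbf R^r\|=\mathcal O(r^3).
\]
The object of interest is therefore the rescaled $\widehat{\mathbf S}^{\,r}/r^2$, whose leading term is fixed and of rank $m$.

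The first step is to form the scaled Gram matrices
\[
\widehat{\mathbf C}_r := \frac{4}{r^4}\,\widehat{\mathbf S}^{\,r}\bigl(\widehat{\mathbf S}^{\,r}\bigr)^{\!\top},
\qquad
\widehat{\mathbf C}_0 := (\mathbf I-\Pi_0)\mathbf Q_0\,\mathbf Q_0^{\!\top}(\mathbf I-\Pi_0),
\]
and control the perturbation. Expanding the product, the cross terms between the leading $\tfrac{r^2}{2}(\mathbf I-\Pi_0)\mathbf Q_0$ and the remainder $\mathbf R^r$ are of order $r^5$ in operator norm, and $\mathbf R^r(\mathbf R^r)^{\!\top}=\mathcal O(r^6)$, so after dividing by $r^4/4$ we obtain $\|\widehat{\mathbf C}_r-\widehat{\mathbf C}_0\|=\mathcal O(r)$.

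The second step is to establish a spectral gap for $\widehat{\mathbf C}_0$ that is independent of $r$. By the maximal-rank hypothesis on the second fundamental form, $(\mathbf I-\Pi_0)\mathbf Q_{\mathrm{base}}$ has (full) column rank $m$; combined with the quadratic-richness assumption $\mathrm{rank}(\mathbf P_2)=m$, this implies that $(\mathbf I-\Pi_0)\mathbf Q_0=(\mathbf I-\Pi_0)\mathbf Q_{\mathrm{base}}\mathbf P_2$ has rank $m$. Hence $\widehat{\mathbf C}_0$ has exactly $m$ strictly positive eigenvalues, separated from $0$ by a gap $\hat\delta_m:=\lambda_m(\widehat{\mathbf C}_0)>0$ independent of $r$. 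For $r$ small enough that $\|\widehat{\mathbf C}_r-\widehat{\mathbf C}_0\|<\hat\delta_m$, the Davis--Kahan $\sin\Theta$ theorem then gives
\[
\bigl\|\sin \mathbf\Theta\bigl(\widehat{\mathbf U}_{(2)}^{r},\,\mathbf U_{(2)}^*\bigr)\bigr\|
\;\le\; \frac{2\,\|\widehat{\mathbf C}_r-\widehat{\mathbf C}_0\|}{\hat\delta_m}
\;=\; \mathcal O(r).
\]
Since $\mathrm{Range}(\widehat{\mathbf U}_{(2)}^{r})=\widehat{\mathcal U}^{r(2)}_{1:m}$ and, from \eqref{curlC22}, $\mathrm{Range}(\mathbf U_{(2)}^*)=\mathcal C$, this is precisely \eqref{22KK10}.

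The delicate point, and the one that must be tracked with care, is that the filtering is performed with the \emph{exact} tangent projector $\Pi_0=\mathbf U_{(1)}^*\mathbf U_{(1)}^{*\top}$ rather than its empirical counterpart based on $\mathbf U_{(1)}^{r}$. If one used the empirical projector, the $\mathcal O(r)$ basis error from \eqref{eq:basis_convergence} applied to the leading term $r\mathbf A_0$ would re-inject an $\mathcal O(r^2)$ contribution into $\widehat{\mathbf S}^{\,r}$; after rescaling by $r^{-2}$ this would be $\mathcal O(1)$ and would destroy both the rank-$m$ leading structure and the spectral gap. Keeping $\Pi_0$ fixed (as in the statement) neutralises this issue and lets the scaling $r^{-4}$ deliver the required $\mathcal O(r)$ perturbation. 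The second, implicit, technical requirement is the quadratic-richness assumption $\mathrm{rank}(\mathbf P_2)=m$: without it the effective rank of $\mathbf Q_0$ could drop and the gap $\hat\delta_m$ would collapse, so that Davis--Kahan would no longer give a uniform-in-$r$ bound.
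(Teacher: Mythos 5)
Your proof is correct and follows essentially the same route as the paper: form the $r^{-4}$-scaled Gram matrix of the filtered snapshot matrix, bound its deviation from the rank-$m$ limit $(\mathbf I-\Pi_0)\mathbf Q_0\mathbf Q_0^\top(\mathbf I-\Pi_0)$ by $\mathcal O(r)$, and invoke the Davis--Kahan $\sin\Theta$ theorem using the $r$-independent spectral gap $\lambda_m > 0$. The only superficial difference is a factor of $4$ in the normalization of the Gram matrices, which leaves the eigenspaces unchanged; your closing remarks on the role of the exact projector $\Pi_0$ and the quadratic-richness assumption are a useful elaboration but not a departure from the paper's argument.
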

\begin{proof}
The scaled Gram matrices are given by
\[
\widehat{\mathbf C}^{\,r} := r^{-4}\widehat{\mathbf S}^{\,r}\widehat{\mathbf S}^{\,r\top},
\qquad
\mathbf C_0 := \tfrac14\,(\mathbf I-\Pi_0)\mathbf Q_{0}\mathbf Q_{0}^\top(\mathbf I-\Pi_0),
\]
and verify $\|\widehat{\mathbf C}^{\,r}-\mathbf C_0\|_2=\mathcal O(r)$.
The matrix $\mathbf{C}_0$ has rank $m$, with $m$ positive eigenvalues separated from $0$ by 
a fixed spectral gap $\delta_m = \lambda_m(\mathbf{C}_0) > 0$ independent of $r$. Provided $r$ is sufficiently small, then $\|\widehat{\mathbf C}^{\,r} - \mathbf{C}_0\| < \delta_m$ and
the Davis--Kahan $\sin\Theta$ theorem (see Appendix p. \pageref{appendix}) yields
\[
\big\|\sin \mathbf \Theta(\widehat{\mathcal U}^{r (2)}_{1:m},\mathcal C)\big\|
\le \frac{2\|\widehat{\mathbf C}^{\,r}-\mathbf C_0\|}{\delta_m}
=\mathcal O(r),
\]
where $\widehat{\mathcal U}^{r (2)}_{1:m} = \mathrm{Range}(\widehat{\mathbf{U}}_{(2)}^r)$.
\end{proof}

\begin{remark}[Unfiltered curvature block]
\label{rem:unfiltered-curvature}
A natural question is whether the same $\mathcal O(r)$ convergence holds for the \emph{unfiltered} second block of left singular vectors $\mathbf{U}_{(2)}^r$ defined by
\begin{equation} \label{eqU2r}
     \mathbf{U}_{(2)}^r := \begin{bmatrix}\boldsymbol{\varphi}_{q+1}^r, & \cdots & ,\boldsymbol{\varphi}_{q+m}^r\end{bmatrix},
\end{equation}
where the columns \(\{\boldsymbol{\varphi}_{q+j}^r\}_{j=1}^m\) denote the second set of   $m$  left singular vectors of $\mathbf S^r$, without first projecting out the tangent space. This requires a more delicate analysis involving alignment matrices and careful tracking of cross-terms. However, numerical experiments suggest that the same $\mathcal O(r)$ convergence rate holds in practice for $\mathbf{U}_{(2)}^r$ as well, see \cref{sec:conv.taylor}.
\end{remark}

\subsubsection{{Quadratic law for the limit curvature frame coefficients}}
\label{sec:2ndorder}

Having established the convergence of the second-order SVD modes $\widehat{\mathbf{U}}_{(2)}^r$ to the curvature subspace $\mathcal C$ in the previous section, we now turn to a fundamental question: \emph{what is the functional dependence of the curvature coefficients on the tangent coefficients?}

This question is central to nonlinear model reduction methods such as NCRBA (see \cref{sec:nlcrbm}) and quadratic-manifold ROMs~\cite{barnett2022quadratic, greedy_quad, geelen2023operator}, which postulate that higher-order coefficients can be expressed as polynomial functions of the leading reduced coordinates. 

Specifically, we show that the projection coefficients onto the curvature subspace --- corresponding to SVD modes $q+1$ through $q+m$ --- are quadratic in the tangent coefficients up to higher-order terms. 

We first consider the limit case, which is independent of $r$. In \Cref{sec:2ndorder-filtered-coeff}, we then extend the result to the filtered case.

\begin{theorem}[Quadratic law for the limit curvature frame block]
\label{thm:quadratic-law-curvature}
Let $\mathbf{U}_{(2)}^*\in\mathbb R^{\mathcal M\times m}$  defined in \eqref{eqU2*} be $\mathbf{U}_{(2)}^* = \begin{bmatrix}\boldsymbol{\varphi}_{q+1}^* ,& \cdots & ,\boldsymbol{\varphi}_{q+m}^*\end{bmatrix}$, 
 the limit coefficients 
$\boldsymbol{\alpha}_{(2)}^*(\boldsymbol{\nu}):=\mathbf{U}_{(2)}^{*\top}\mathbf v(\boldsymbol{\nu})\in\mathbb R^m$, satisfies
\begin{equation} \label{eqthm10}
   \boldsymbol{\alpha}_{(2)}^*(\boldsymbol{\nu})
=
\mathbf R\,
\vecsym\!\Big(\big((\mathbf A^*)^{-1}\boldsymbol{\alpha}_{(1)}^*(\boldsymbol{\nu})\big)
\big((\mathbf A^*)^{-1}\boldsymbol{\alpha}_{(1)}^*(\boldsymbol{\nu})\big)^\top\Big)
+\mathcal O\big(\|\boldsymbol{\alpha}_{(1)}^*(\boldsymbol{\nu})\|^3\big), 
\end{equation}
where $\boldsymbol{\alpha}_{(1)}^*(\boldsymbol{\nu})$ is defined in
\eqref{eq:aaa2}, 
$\mathbf A^*$
is defined in \eqref{eq:transition_matrix}
and  $\mathbf R\in\mathbb R^{m\times m}$ is defined 
by, 
\[ \forall (j,\ell)\in\mathcal I, k=1,\dots,m, \quad
\mathbf R_{k,(j,\ell)}=\tfrac12\,(2-\delta_{j\ell})\big\langle \boldsymbol{\varphi}_{q+k}^*,\,(\mathbf I-\Pi_0)\,\,\partial_{\nu_j}\partial_{\nu_\ell}U(\boldsymbol{0})\big\rangle.
\]
\end{theorem}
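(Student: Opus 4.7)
The plan is to plug the second-order Taylor expansion of $U$ into the definition $\boldsymbol{\alpha}_{(2)}^*(\boldsymbol{\nu})=\mathbf{U}_{(2)}^{*\top}\mathbf v(\boldsymbol{\nu})$, exploit the orthogonality $\mathbf{U}_{(2)}^*\perp T_0^*$ to kill the linear term, identify the resulting quadratic form with $\mathbf R\,\vecsym(\boldsymbol{\nu}\boldsymbol{\nu}^\top)$, and finally substitute $\boldsymbol{\nu}$ in terms of $\boldsymbol{\alpha}_{(1)}^*(\boldsymbol{\nu})$ via the relation induced by $\mathbf A^*$.

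First I would write the Taylor expansion $\mathbf v(\boldsymbol{\nu}) = \mathbf J_U(\boldsymbol{0})\boldsymbol{\nu} + \tfrac12 D^2U(\boldsymbol{0})[\boldsymbol{\nu},\boldsymbol{\nu}] + \mathcal O(\|\boldsymbol{\nu}\|^3)$ and expand the Hessian action as a sum over pairs: using $\sum_{j,\ell=1}^q\nu_j\nu_\ell\,\partial_{\nu_j}\partial_{\nu_\ell}U(\boldsymbol{0}) = \sum_{(j,\ell)\in\mathcal I}(2-\delta_{j\ell})\nu_j\nu_\ell\,\partial_{\nu_j}\partial_{\nu_\ell}U(\boldsymbol{0}) = \mathbf Q_{\mathrm{base}}\,\vecsym(\boldsymbol{\nu}\boldsymbol{\nu}^\top)$, with the symmetrization factor absorbed into the columns of $\mathbf Q_{\mathrm{base}}$ as stated in the definition. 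Then I would apply $\mathbf{U}_{(2)}^{*\top}$; since $\mathrm{Range}(\mathbf{U}_{(2)}^*) = \mathcal C \subset (T_0^*)^\perp$ by definition \eqref{curlC22}, we have $\mathbf{U}_{(2)}^{*\top}\mathbf J_U(\boldsymbol{0})=0$, and moreover $\mathbf{U}_{(2)}^{*\top}\mathbf Q_{\mathrm{base}} = \mathbf{U}_{(2)}^{*\top}(\mathbf I-\Pi_0)\mathbf Q_{\mathrm{base}}$ because $\mathbf{U}_{(2)}^*$ is already in the range of $\mathbf I-\Pi_0$. A direct entrywise identification then yields
\begin{equation*}
\boldsymbol{\alpha}_{(2)}^*(\boldsymbol{\nu})
= \tfrac12\,\mathbf{U}_{(2)}^{*\top}\mathbf Q_{\mathrm{base}}\,\vecsym(\boldsymbol{\nu}\boldsymbol{\nu}^\top) + \mathcal O(\|\boldsymbol{\nu}\|^3)
= \mathbf R\,\vecsym(\boldsymbol{\nu}\boldsymbol{\nu}^\top) + \mathcal O(\|\boldsymbol{\nu}\|^3),
\end{equation*}
where the matrix $\mathbf R$ is exactly the one defined in the statement (the $\tfrac12$ from Taylor combines with $(2-\delta_{j\ell})$ from the symmetrization).

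It remains to pass from the variable $\boldsymbol{\nu}$ to $\boldsymbol{\alpha}_{(1)}^*(\boldsymbol{\nu})$. From \eqref{eq:aaa2} and the same Taylor expansion, $\boldsymbol{\alpha}_{(1)}^*(\boldsymbol{\nu}) = \mathbf A^*\boldsymbol{\nu} + \mathbf{U}_{(1)}^{*\top}\mathcal R_2(\boldsymbol{\nu}) = \mathbf A^*\boldsymbol{\nu} + \mathcal O(\|\boldsymbol{\nu}\|^2)$. Since $\mathbf A^*$ is invertible (it is the matrix of a change of basis between two bases of $T_0^*$), the implicit function theorem, or an elementary Neumann-series argument analogous to \Cref{lem:jacob_inverse}, provides $\boldsymbol{\nu} = (\mathbf A^*)^{-1}\boldsymbol{\alpha}_{(1)}^*(\boldsymbol{\nu}) + \mathcal O(\|\boldsymbol{\alpha}_{(1)}^*(\boldsymbol{\nu})\|^2)$, and hence
\begin{equation*}
\boldsymbol{\nu}\boldsymbol{\nu}^\top
= \bigl((\mathbf A^*)^{-1}\boldsymbol{\alpha}_{(1)}^*(\boldsymbol{\nu})\bigr)
  \bigl((\mathbf A^*)^{-1}\boldsymbol{\alpha}_{(1)}^*(\boldsymbol{\nu})\bigr)^\top
  + \mathcal O(\|\boldsymbol{\alpha}_{(1)}^*(\boldsymbol{\nu})\|^3),
\end{equation*}
because the cross-terms are of the form (linear)$\times$(quadratic). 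Applying $\vecsym$ (which is linear and bounded) and multiplying by $\mathbf R$ gives \eqref{eqthm10}, using $\|\boldsymbol{\nu}\|^3 = \mathcal O(\|\boldsymbol{\alpha}_{(1)}^*(\boldsymbol{\nu})\|^3)$ to absorb the Taylor remainder.

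The main obstacle I anticipate is bookkeeping rather than conceptual: getting the symmetrization factor $(2-\delta_{j\ell})/2$ to match the stated $\mathbf R$ with the convention used for $\vecsym$, and cleanly propagating the remainder bounds so that the final $\mathcal O(\|\boldsymbol{\alpha}_{(1)}^*(\boldsymbol{\nu})\|^3)$ covers both the Taylor residual $\mathcal R_3$ and the inversion error from $\boldsymbol{\nu}\mapsto\boldsymbol{\alpha}_{(1)}^*$. A minor subtlety worth spelling out is that one really uses $\mathbf{U}_{(2)}^{*\top}(\mathbf I-\Pi_0) = \mathbf{U}_{(2)}^{*\top}$ so that only the filtered Hessian contributes, which is what makes the definition of $\mathbf R$ in terms of $(\mathbf I-\Pi_0)\partial_{\nu_j}\partial_{\nu_\ell}U(\boldsymbol{0})$ consistent with the unfiltered Hessian appearing in the Taylor expansion.
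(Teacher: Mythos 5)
Your proposal follows essentially the same route as the paper: project the second-order Taylor expansion onto $\mathbf{U}_{(2)}^*$, use the orthogonality $\mathbf{U}_{(2)}^*\perp\mathrm{Range}(\mathbf J_U(\boldsymbol{0}))$ to kill the linear term and identify the remaining quadratic form with $\mathbf R\,\vecsym(\boldsymbol{\nu}\boldsymbol{\nu}^\top)$, then invert the first-order relation $\boldsymbol{\alpha}_{(1)}^*(\boldsymbol{\nu})=\mathbf A^*\boldsymbol{\nu}+\mathcal O(\|\boldsymbol{\nu}\|^2)$ and substitute. Your extra care about the symmetrization factor $(2-\delta_{j\ell})/2$ and the identity $\mathbf{U}_{(2)}^{*\top}(\mathbf I-\Pi_0)=\mathbf{U}_{(2)}^{*\top}$ is a correct elaboration of what the paper leaves implicit.
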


\begin{proof}
Projecting the Taylor expansion 
\begin{equation}\label{1-2+}
    \mathbf v(\boldsymbol{\nu})=\mathbf J_U(\boldsymbol{0})\boldsymbol{\nu}
+\tfrac12 D^2U(\boldsymbol{0})[\boldsymbol{\nu},\boldsymbol{\nu}]+\mathcal O(\|\boldsymbol{\nu}\|^3)
\end{equation}
onto $\mathbf{U}_{(1)}^*$ yields
$
\boldsymbol{\alpha}_{(1)}^*(\boldsymbol{\nu})
=\mathbf A^*\,\boldsymbol{\nu}+\mathcal O(\|\boldsymbol{\nu}\|^2),
$
hence, following the same lines as in Proposition \ref{prop:coeff_convergence}, we deduce  $\boldsymbol{\nu}=(\mathbf A^*)^{-1}\boldsymbol{\alpha}_{(1)}^*(\boldsymbol{\nu})
+\mathcal O(\|\boldsymbol{\alpha}_{(1)}^*(\boldsymbol{\nu})\|^2)$.

Let us now combine the definition 
$\boldsymbol{\alpha}_{(2)}^*(\boldsymbol{\nu}):=\mathbf{U}_{(2)}^{*\top}\mathbf v(\boldsymbol{\nu})$ with
\eqref{1-2+}, and remark that $\mathbf{U}_{(2)}^*\perp \mathrm{Range}(\mathbf J_U(\boldsymbol{0}))$, it follows that the linear term vanishes and
\[
\boldsymbol{\alpha}_{(2)}^*(\boldsymbol{\nu})
=\tfrac12\,\mathbf{U}_{(2)}^{*\top}(\mathbf I-\Pi_0)D^2U(\boldsymbol{0})[\boldsymbol{\nu},\boldsymbol{\nu}]
+\mathcal O(\|\boldsymbol{\nu}\|^3)
=\mathbf R\,\vecsym(\boldsymbol{\nu}\boldsymbol{\nu}^\top)+\mathcal O(\|\boldsymbol{\nu}\|^3).
\]
Substituting $\boldsymbol{\nu}=(\mathbf A^*)^{-1}\boldsymbol{\alpha}_{(1)}^*(\boldsymbol{\nu})+\mathcal O(\|\boldsymbol{\alpha}_{(1)}^*(\boldsymbol{\nu})\|^2)$
yields the claim.
\end{proof}

\subsubsection{Quadratic law for the \emph{filtered} curvature coefficients}
\label{sec:2ndorder-filtered-coeff}

Let $\widehat{\mathbf{U}}_{(2)}^{r}$ be defined 
in \eqref{eqU2*}, and
$\mathcal C$ be defined in \eqref{curlC22}, from Theorem~\ref{thm:DK_curvature}, $\widehat{\mathcal U}^{r (2)}_{1:m}=\mathrm{Range}(\widehat{\mathbf{U}}_{(2)}^r)$
converges to $\mathcal C$
at rate $\mathcal O(r)$. 
By the discussion in Appendix~\ref{appendix:svd_modes_convergence} (Procrustes alignment), we deduce from \eqref{22KK10} that 
there exists $\mathbf{O}_{(2)}^r\in O(m)$ such that
\begin{equation}
\label{eq:U2hat-align}
\widehat{\mathbf{U}}_{(2)}^r=\mathbf{U}_{(2)}^*\,\mathbf{O}_{(2)}^r+\mathbf E_2^r,
\qquad \|\mathbf E_2^r\|=\mathcal O(r).
\end{equation}
Moreover, because $\widehat{\mathbf{U}}_{(2)}^r$ is a left singular block of
$\widehat{\mathbf S}^{\,r}=(\mathbf I-\Pi_0)\mathbf S^r$, we have $\Pi_0\widehat{\mathbf{U}}_{(2)}^r=0$.
Since also $\Pi_0\mathbf{U}_{(2)}^*=0$, it follows that $\Pi_0\mathbf E_2^r=0$, i.e.
$\mathbf E_2^r=(\mathbf I-\Pi_0)\mathbf E_2^r$.

\begin{theorem}[Filtered curvature coefficients are homogeneous quadratic in the first-order ones]
\label{thm:filtered-quadratic-law}
The  coefficients 
$\widehat{\boldsymbol{\alpha}}_{(2)}^r(\boldsymbol{\nu}):=\widehat{\mathbf{U}}_{(2)}^{r\top}\mathbf v(\boldsymbol{\nu})\in\mathbb R^m$, satisfies
uniformly for $\boldsymbol{\nu} \in \mathcal B(\boldsymbol{0},r)$,
\begin{equation}
\label{eq:filtered-quadratic-law}
\widehat{\boldsymbol{\alpha}}_{(2)}^r(\boldsymbol{\nu})
=
\mathbf R^r\,
\vecsym\!\Big( (\mathbf B^r\boldsymbol{\alpha}_{(1)}^r(\boldsymbol{\nu}))(\mathbf B^r\boldsymbol{\alpha}_{(1)}^r(\boldsymbol{\nu}))^\top \Big)
\;+\;
\mathcal O\!\Big(r\,\|\boldsymbol{\alpha}_{(1)}^r(\boldsymbol{\nu})\|^2+\|\boldsymbol{\alpha}_{(1)}^r(\boldsymbol{\nu})\|^3\Big).
\end{equation}
where $\boldsymbol{\alpha}_{(1)}^r(\boldsymbol{\nu})$
is defined in \eqref{eq:aaa1},  $\mathbf R^r := (\mathbf{O}_{(2)}^r)^\top \mathbf R \in\mathbb R^{m\times m}$ with
the matrix $\mathbf B^r$ (resp. $\mathbf R$) in $\mathbb R^{m\times m}$ 
 defined in Proposition \ref{prop:coeff_convergence} (resp. in 
Theorem \ref{thm:quadratic-law-curvature}).
In particular, the leading term is \emph{homogeneous quadratic} in $\boldsymbol{\alpha}_{(1)}^r(\boldsymbol{\nu})$, and the
remainder is $\mathcal O(r^3)$ when $\|\boldsymbol{\nu}\|\le r$.
\end{theorem}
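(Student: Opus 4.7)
The plan is to reduce the filtered quadratic law to the limit one (Theorem~\ref{thm:quadratic-law-curvature}) via the alignment relation~\eqref{eq:U2hat-align}, and then use Proposition~\ref{prop:coeff_convergence} to convert everything from $\boldsymbol{\alpha}_{(1)}^*$ and $\boldsymbol{\nu}$ into the empirical coordinates $\boldsymbol{\alpha}_{(1)}^r$. Concretely, start from the definition $\widehat{\boldsymbol{\alpha}}_{(2)}^r(\boldsymbol{\nu})=\widehat{\mathbf{U}}_{(2)}^{r\top}\mathbf v(\boldsymbol{\nu})$ and substitute~\eqref{eq:U2hat-align} to obtain
\[
\widehat{\boldsymbol{\alpha}}_{(2)}^r(\boldsymbol{\nu})
=(\mathbf{O}_{(2)}^r)^\top\boldsymbol{\alpha}_{(2)}^*(\boldsymbol{\nu})+\mathbf E_2^{r\top}\mathbf v(\boldsymbol{\nu}).
\]

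The first subtle step is to bound the residual $\mathbf E_2^{r\top}\mathbf v(\boldsymbol{\nu})$ sharply. A naive bound gives only $\mathcal O(r\|\boldsymbol{\nu}\|)$, which is too weak. However, because $\widehat{\mathbf{U}}_{(2)}^r$ and $\mathbf{U}_{(2)}^*$ both lie in $\mathrm{Range}(\mathbf I-\Pi_0)$, the excerpt already noted $\mathbf E_2^{r}=(\mathbf I-\Pi_0)\mathbf E_2^r$, so $\mathbf E_2^{r\top}=\mathbf E_2^{r\top}(\mathbf I-\Pi_0)$. Since the linear part of $\mathbf v(\boldsymbol{\nu})$ lies in $T_0^*=\mathrm{Range}(\Pi_0)$, the Taylor expansion~\eqref{1-2+} gives $(\mathbf I-\Pi_0)\mathbf v(\boldsymbol{\nu})=\mathcal O(\|\boldsymbol{\nu}\|^2)$, and combining with $\|\mathbf E_2^r\|=\mathcal O(r)$ yields $\mathbf E_2^{r\top}\mathbf v(\boldsymbol{\nu})=\mathcal O(r\|\boldsymbol{\nu}\|^2)$.

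Next, plug Theorem~\ref{thm:quadratic-law-curvature} into $(\mathbf{O}_{(2)}^r)^\top\boldsymbol{\alpha}_{(2)}^*(\boldsymbol{\nu})$, producing the term $\mathbf R^r\,\vecsym\!\big(((\mathbf A^*)^{-1}\boldsymbol{\alpha}_{(1)}^*(\boldsymbol{\nu}))((\mathbf A^*)^{-1}\boldsymbol{\alpha}_{(1)}^*(\boldsymbol{\nu}))^\top\big)$ plus a $\mathcal O(\|\boldsymbol{\alpha}_{(1)}^*(\boldsymbol{\nu})\|^3)$ remainder, with $\mathbf R^r=(\mathbf{O}_{(2)}^r)^\top\mathbf R$. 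Now Proposition~\ref{prop:coeff_convergence} reads $\boldsymbol{\alpha}_{(1)}^r(\boldsymbol{\nu})=(\mathbf{O}_{(1)}^r)^\top\boldsymbol{\alpha}_{(1)}^*(\boldsymbol{\nu})+\mathbf e_1^r$ with $\|\mathbf e_1^r\|=\mathcal O(r\|\boldsymbol{\nu}\|)$, and since $\mathbf B^r=(\mathbf A^*)^{-1}\mathbf{O}_{(1)}^r$, inverting the rotation gives $(\mathbf A^*)^{-1}\boldsymbol{\alpha}_{(1)}^*(\boldsymbol{\nu})=\mathbf B^r\boldsymbol{\alpha}_{(1)}^r(\boldsymbol{\nu})+\tilde{\mathbf e}^r$ with $\|\tilde{\mathbf e}^r\|=\mathcal O(r\|\boldsymbol{\nu}\|)$. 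Expanding $\vecsym(\cdot)(\cdot)^\top$ by bilinearity and applying Lemma~\ref{lem:jacob_inverse} to swap $\|\boldsymbol{\nu}\|$ for $\|\boldsymbol{\alpha}_{(1)}^r(\boldsymbol{\nu})\|$ yields the clean substitution
\[
\vecsym\!\bigl((\mathbf A^*)^{-1}\boldsymbol{\alpha}_{(1)}^*(\mathbf A^*)^{-\top}\boldsymbol{\alpha}_{(1)}^{*\top}\bigr)
=\vecsym\!\bigl((\mathbf B^r\boldsymbol{\alpha}_{(1)}^r)(\mathbf B^r\boldsymbol{\alpha}_{(1)}^r)^\top\bigr)+\mathcal O\bigl(r\|\boldsymbol{\alpha}_{(1)}^r\|^2\bigr).
\]

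Finally, aggregating all three error contributions, namely $\mathcal O(r\|\boldsymbol{\nu}\|^2)$ from the $\mathbf E_2^r$ term, $\mathcal O(\|\boldsymbol{\alpha}_{(1)}^*\|^3)=\mathcal O(\|\boldsymbol{\alpha}_{(1)}^r\|^3)$ from Theorem~\ref{thm:quadratic-law-curvature}, and $\mathcal O(r\|\boldsymbol{\alpha}_{(1)}^r\|^2)$ from the vecsym substitution, delivers the announced bound $\mathcal O(r\|\boldsymbol{\alpha}_{(1)}^r\|^2+\|\boldsymbol{\alpha}_{(1)}^r\|^3)$, which is $\mathcal O(r^3)$ uniformly for $\boldsymbol{\nu}\in\mathcal B(\boldsymbol{0},r)$. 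The main obstacle is really the first step: one must exploit the geometric fact $\mathbf E_2^r\perp T_0^*$ to upgrade the bound on $\mathbf E_2^{r\top}\mathbf v(\boldsymbol{\nu})$ from $\mathcal O(r\|\boldsymbol{\nu}\|)$ to $\mathcal O(r\|\boldsymbol{\nu}\|^2)$; without this cancellation of the linear part of $\mathbf v$, the leading quadratic term would be swamped by the error and the theorem would fail. The subsequent algebra is then a controlled bookkeeping of quadratic remainders using Lemma~\ref{lem:jacob_inverse}.
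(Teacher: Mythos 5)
Your proof is correct and follows essentially the same route as the paper: align $\widehat{\mathbf U}_{(2)}^r$ to $\mathbf U_{(2)}^*$ via~\eqref{eq:U2hat-align}, exploit $\mathbf E_2^r=(\mathbf I-\Pi_0)\mathbf E_2^r$ together with $(\mathbf I-\Pi_0)\mathbf v(\boldsymbol{\nu})=\mathcal O(\|\boldsymbol{\nu}\|^2)$ to sharpen the residual to $\mathcal O(r\|\boldsymbol{\nu}\|^2)$, then invoke the limit quadratic law and convert to $\boldsymbol{\alpha}_{(1)}^r$ via Proposition~\ref{prop:coeff_convergence}. The only cosmetic difference is that the paper passes through $\vecsym(\boldsymbol{\nu}\boldsymbol{\nu}^\top)$ and then substitutes $\boldsymbol{\nu}$ via~\eqref{eq:nu-vs-alpha1r}, whereas you keep $\boldsymbol{\alpha}_{(1)}^*$ as the intermediate and invert the first relation of Proposition~\ref{prop:coeff_convergence}; the two are equivalent bookkeeping choices.
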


\begin{proof}
Let us reduce the coefficient to the normal component of $\mathbf v(\boldsymbol{\nu})$.
Using \eqref{eq:U2hat-align},
\[
\widehat{\boldsymbol{\alpha}}_{(2)}^r(\boldsymbol{\nu})
=\widehat{\mathbf{U}}_{(2)}^{r\top}\mathbf v(\boldsymbol{\nu})
=(\mathbf{O}_{(2)}^r)^\top \underbrace{\mathbf{U}_{(2)}^{*\top}\mathbf v(\boldsymbol{\nu})}_{=:\,\boldsymbol{\alpha}_{(2)}^*(\boldsymbol{\nu})}
+ \mathbf E_2^{r\top}\mathbf v(\boldsymbol{\nu}) = 
(\mathbf{O}_{(2)}^r)^\top \boldsymbol{\alpha}_{(2)}^*(\boldsymbol{\nu})+ 
\mathbf E_2^{r\top}(\mathbf I-\Pi_0)\mathbf v(\boldsymbol{\nu}).
\]
From the Taylor expansion of $\mathbf v$ and $(\mathbf I-\Pi_0)\mathbf J_U(0)=0$,
\[
(\mathbf I-\Pi_0)\mathbf v(\boldsymbol{\nu})
=\frac12\,(\mathbf I-\Pi_0)D^2U(0)[\boldsymbol{\nu},\boldsymbol{\nu}]+\mathcal O(\|\boldsymbol{\nu}\|^3)
=\mathcal O(\|\boldsymbol{\nu}\|^2).
\]
Therefore, from \eqref{eq:U2hat-align}
\begin{equation}
\label{eq:alpha2hat-vs-alpha2star}
\widehat{\boldsymbol{\alpha}}_{(2)}^r(\boldsymbol{\nu})
=(\mathbf{O}_{(2)}^r)^\top \boldsymbol{\alpha}_{(2)}^*(\boldsymbol{\nu})
+\mathcal O\big(r\,\|\boldsymbol{\nu}\|^2\big).
\end{equation}
Now we use the limit quadratic law of Theorem~\ref{thm:quadratic-law-curvature} :
\[
\boldsymbol{\alpha}_{(2)}^*(\boldsymbol{\nu})
=\mathbf R\,\vecsym\!\big(\boldsymbol{\nu}\boldsymbol{\nu}^\top\big)+\mathcal O(\|\boldsymbol{\nu}\|^3),
\]
hence \eqref{eq:alpha2hat-vs-alpha2star} gives
\begin{equation}
\label{eq:alpha2hat-nu}
\widehat{\boldsymbol{\alpha}}_{(2)}^r(\boldsymbol{\nu})
=\mathbf R^r\,\vecsym(\boldsymbol{\nu}\boldsymbol{\nu}^\top)+\mathcal O\big(r\|\boldsymbol{\nu}\|^2+\|\boldsymbol{\nu}\|^3\big),
\qquad \mathbf R^r:=(\mathbf{O}_{(2)}^r)^\top\mathbf R.
\end{equation}
Next, we use the expression of $\boldsymbol{\nu}$ in terms of the empirical first-order coefficients
\eqref{eq:nu-vs-alpha1r}
and substitute it in the quadratic form $\vecsym(\boldsymbol{\nu}\boldsymbol{\nu}^\top)$.
Since $\vecsym(\cdot)$ is linear and
$\boldsymbol{\nu}\mapsto
\boldsymbol{\nu}\boldsymbol{\nu}^\top$  is an square matrix of size $m$ with homogeneous quadratic polynomial entries in $\vnu$, we deduce
\[
\vecsym(\boldsymbol{\nu}\boldsymbol{\nu}^\top)
=
\vecsym\!\Big((\mathbf B^r\boldsymbol{\alpha})(\mathbf B^r\boldsymbol{\alpha})^\top\Big)
+\mathcal O\big(r\|\boldsymbol{\alpha}\|^2+\|\boldsymbol{\alpha}\|^3\big),
\]
with $\boldsymbol{\alpha}=\boldsymbol{\alpha}_{(1)}^r(\boldsymbol{\nu})$.
Combining with \eqref{eq:alpha2hat-nu} and using again $\|\boldsymbol{\nu}\|=\mathcal O(\|\boldsymbol{\alpha}\|)$ yields
\eqref{eq:filtered-quadratic-law}.
\end{proof}

\begin{remark}[Unfiltered curvature coefficients]
\label{rem:unfiltered-coefficients}
Remark~\ref{rem:unfiltered-curvature} concerns the convergence of the unfiltered curvature modes
$\mathbf U_{(2)}^r$. Once such convergence holds, an analogous statement can be made at the level
of the associated reduced coefficients \(\boldsymbol{\alpha}_{(2)}^r(\boldsymbol{\nu})\). Indeed, the asymptotic quadratic dependence established for the \emph{filtered} curvature coefficients in Theorem~\ref{thm:filtered-quadratic-law} relies primarily on the convergence of the corresponding singular vectors and on local smoothness arguments.
Therefore, if $\mathbf U_{(2)}^r$ converges with the same $\mathcal O(r)$ rate, the associated
coefficients admit an asymptotic expansion of quadratic type with respect to the first block of
coefficients \(\boldsymbol{\alpha}_{(1)}^r(\boldsymbol{\nu})\) as well.

However, in contrast to the filtered setting, this quadratic dependence is not necessarily
\emph{homogeneous}. The absence of tangent-space filtering allows linear terms to persist in the expansion. As a result, the unfiltered curvature coefficients may exhibit a \emph{full quadratic structure}. This observation further motivates, in \Cref{sec:quadratic}, the use of augmented full quadratic models that do not enforce homogeneity a priori.
\end{remark}

\section{Discussion on quadratic approximation}
\label{sec:quadratic}
In \Cref{sec:taylor.order2}, we have shown that, in an SVD-based representation, the second block of coefficients is asymptotically (for small variations of the parameter) quadratic in the first block. This observation provides a theoretical justification for the quadratic ansatz that is used in, e.g.,~\cite{barnett2022quadratic, greedy_quad, geelen2023operator}, to represent the expressions of the coefficients of the high modes as a function of the coefficients of the low modes. If indeed this is natural in the \textit{Taylor} regime, this has clearly limitations as 
\begin{itemize}
    \item this expansion is valid locally for the variation of the parameter close to a reference value,
    \item this quadratic expansion calls for a higher-order polynomial ansatz,
    \item the number of low and high modes is not clear.
\end{itemize}

In what follows, we recall the basis of these approaches and discuss the points above.

\subsection{Description of the quadratic approximation}
\label{sec:quadapprox}
In this section, we work in $X = \mathbb{R}^{\mathcal N}$, using the notation of Section~\ref{sec:linear-rb}, where $\mathcal N$ denotes the high-fidelity dimension. We recall that the linear approximation $\widetilde{\mathbf{u}}$ of an element $\mathbf u \in X$ is given by
\begin{equation}
  \label{eq:linearapprox}
  \widetilde{\mathbf{u}} = \mathbf V \mathbf q \in \mathbb{R}^{\mathcal N},
\end{equation}
where $\mathbf V \in \mathbb{R}^{\mathcal N \times n}$ is the reduced basis matrix, 
$n \ll \mathcal N$, and $\mathbf q \in \mathbb{R}^{n}$ is the vector of reduced (generalized) coordinates.

The idea of nonlinear model order reduction methods relies on the notion of sensing number $s_n(\mathcal S)$ and
adds a nonlinear correction term based on a feature map 
\( \mathbf{\Psi} : \mathbb{R}^n \to \mathbb{R}^m \), resulting in an improved approximation (see \eqref{eq:CRBsol})
\begin{equation}
  \label{eq:nonlinearapprox}
  \widetilde{\mathbf{u}} = \mathbf V \mathbf q + \mathbf W \, \mathbf{\Psi}(\mathbf q)  
  \in \mathbb{R}^{\mathcal N},
\end{equation}
with \( \mathbf W \in \mathbb{R}^{\mathcal N \times m} \), where \( m \) is the dimension of the nonlinear correction term.

Several nonlinear model reduction methods rely on a homogeneous quadratic feature map  given by:
\[
\mathbf{\Psi}_{\text{hom}} : \mathbb{R}^n \to \mathbb{R}^{m},
\qquad
\mathbf q \mapsto \vecsym(\mathbf q \mathbf q^\top),
\qquad 
m = \frac{n(n+1)}{2}.
\]
Two main approaches in the literature employ this map differently.  
The first, introduced in~\cite{barnett2022quadratic}, uses the first \(n\) left singular vectors of a snapshot matrix \( \mathbf{S} \in \mathbb{R}^{\mathcal{N} \times M}\) as the linear basis \( \mathbf {V} \), and determines the quadratic correction matrix \( \mathbf {W} \) by solving the least-squares problem
\begin{equation}
\label{eq:quad_opt}
\mathbf {W} 
= \arg\min_{\mathbf {W}' \in \mathbb{R}^{\mathcal{N} \times m}}
\left\| \mathbf {V}\mathbf {V}^\top\mathbf {S} 
+ \mathbf {W}' \mathbf{\Psi}_{\text{hom}}(\mathbf {V}^\top \mathbf {S}) 
- \mathbf {S} \right\|_F^2.
\end{equation}
where \( \mathbf{\Psi}_{\text{hom}}(\mathbf{V}^\top S) \in \mathbb{R}^{m \times M} \) denotes the column-wise evaluation of the quadratic feature map applied to the reduced coordinates.  We refer to this method as the Quadratic SVD-based Method (QSVDM).

A second line of work~\cite{greedy_quad} observes that the SVD/POD basis is not necessarily optimal for a given feature map.  
The nonlinear decoder \(\mathbf{\Psi}_{\text{hom}}\) operates on the projected data \( \mathbf{V}^\top \mathbf{S} \), so if \( \mathbf{V} \) is chosen independently of \(\mathbf{\Psi}_{\text{hom}}\), relevant information may be lost.  
To address this, the authors propose building \( \mathbf{V} \) specifically for the quadratic decoding task through a greedy selection of \(n\) vectors from the first \( r \gg n \) singular modes.  
We refer to this approach as the \emph{Quadratic Greedy-based Method} (QGM).

Recalling our
\Cref{thm:quadratic-law-curvature} leads to identifying the generalized coordinates with the tangent coefficients
$\mathbf q \equiv \boldsymbol{\alpha}_1^*(\nu)\in\mathbb R^{q}$ (so that $m=q(q+1)/2$).
Indeed, Theorem~\ref{thm:quadratic-law-curvature}  states
\[
\boldsymbol{\alpha}_2^*(\nu)
=
\mathbf R\,
\vecsym\!\Big((\mathbf A^{*^{-1}}\mathbf q)(\mathbf A^{*^{-1}}\mathbf q)^\top\Big)
+\mathcal O(\|\mathbf q\|^3) 
= \mathbf R\,
\vecsym\!\Big(\mathbf A^{*^{-1}}(\mathbf q\mathbf q^\top)\mathbf A^{*^{-\top}}\Big)
+\mathcal O(\|\mathbf q\|^3).
\]
This leads us to introduce the (unique) matrix  $\mathbf T_{A^*}\in\mathbb R^{m\times m}$  such that
\[
\vecsym\!\big(\mathbf A^{*^{-1}} \mathbf X \mathbf A^{*^{-\top}}\big)
=\mathbf T_{A^*}\,\vecsym(\mathbf X),
\qquad \forall\,\mathbf X\in\mathbb S^{q}.
\]
where $\mathbb S^{q}$ is the space of real symmetric $q\times q$ matrices. Then
\[
\boldsymbol{\alpha}_2^*(\nu)
=
(\mathbf R\,\mathbf T_{A^*})\,\vecsym(\mathbf q\mathbf q^\top)
+\mathcal O(\|\mathbf q\|^3),
\]
and the quadratic-manifold correction matrix in the above ansatz \eqref{eq:nonlinearapprox} can be identified locally as
\[
\mathbf W
:=\mathbf U_{(2)}^*\,\mathbf R\,\mathbf T_{A^*}.
\]

\subsection{Problem formulation}
\label{sec:problem_formulation}
Both approaches employ the quadratic feature map \(\mathbf{\Psi}_{\text{hom}}\) and neglect any affine correction term. However, even asymptotically, as shown in Section~\ref{sec:taylor.order2}, the affine component in the correction may be non-negligible, and over a wide parameter domain its influence can persist. We illustrate this limitation on a simple 2D toy example, whose columns of the snapshot matrix \(\mathbf S\) are given by
\begin{equation}
\label{eq:testcasequadratic}
  \mathbf s_i = 
  \begin{bmatrix}
    c_1\,\alpha \mu^i \\[1ex]
    c_2\,\bigl(\beta + \gamma (\mu^i)^2\bigr)
  \end{bmatrix},
  \qquad i=1,\dots,M,
\end{equation}
where $\{\mu^i\}_{i=1}^M$ is a symmetric sampling of a centred parametric interval
$[-\mu_{\max},\mu_{\max}]$ (i.e., \(\mu^i = -\mu^{M+1-i}
\)), and the weights satisfy $c_1 > c_2 > 0$.
We recall that in both quadratic approaches, an SVD is first performed.  
It is clear that two modes are needed to capture the solutions in
\eqref{eq:testcasequadratic}. However, nothing guarantees that these modes will be 
\(e_1=(1,0)^\top\) and \(e_2=(0,1)^\top\), nor that the quadratic dependence in the second coordinate will be preserved after the SVD.

To enforce this behavior, we impose the following discrete orthonormality constraints:
\begin{equation}
  \label{eq:constraints}
  \frac{1}{M}\sum_{i=1}^M (\alpha \mu^i)^2 = 1,
  \qquad
  \frac{1}{M}\sum_{i=1}^M \bigl(\beta + \gamma (\mu^i)^2\bigr)^2 = 1,
  \qquad
  \frac{1}{M}\sum_{i=1}^M \mu^i\bigl(\beta + \gamma (\mu^i)^2\bigr) = 0.
\end{equation}

Because the sampling is symmetric and the interval is centred, the last condition is automatically satisfied
provided that $\beta + \gamma (\mu^i)^2$ is an even function of $\mu^i$.
In particular, choosing
\begin{equation}
  \label{eq:gamma_choice}
  \gamma = -\frac{\beta}{\frac{1}{M}\sum_{i=1}^M (\mu^i)^2}
\end{equation}
ensures that the second coordinate has zero mean and thus the snapshot matrix \(\mathbf S\) is centered, i.e., 
\begin{equation}
   \overline{\mathbf s} = \frac{1}{M}\sum_{i=1}^M \mathbf s_i = \mathbf 0. 
\end{equation}
The remaining parameters $\alpha$ and $\beta$ are then fixed by the normalization conditions in~\eqref{eq:constraints}.

Under these assumptions, and thanks to the ordering condition $c_1 > c_2 > 0$,
the first two left singular vectors of $\mathbf S$ are precisely $e_1$ and $e_2$ (see \Cref{rem:svd}).
In this setting, we have $n=1$, $m=1$, and $N=2$.

While the linear approximation using \(N\) modes and the NCRBA with a quadratic regression model can both recover the full nonlinear behavior, the quadratic methods based solely on the homogeneous map \(\mathbf{\Psi}_{\text{hom}}\) cannot reproduce the constant term $c_2\beta$ in the second coefficient. This limitation is illustrated in Figure~\ref{fig:toy_reconstruction}.

\subsection{Need for full quadratic ansatz}
\label{sec:enhanced_quad} 

To address the limitation identified in the previous subsection—namely, that the 
homogeneous quadratic feature map $\mathbf{\Psi}_{\text{hom}}$ cannot capture affine behaviour—we simply replace it 
with a full quadratic feature map that includes constant and linear terms.  
Given $\mathbf q \in \mathbb{R}^n$, we define
\[
 \mathbf{\Psi}_{\text{full}}(\mathbf q)
=
\begin{bmatrix}
1 \\[0.3em] \mathbf q \\[0.3em] \vecsym(\mathbf q \mathbf q^\top)
\end{bmatrix}
\in \mathbb{R}^{m},
\qquad
m = 1 + n + \tfrac{n(n+1)}{2}.
\]

This modification integrates directly into both quadratic methods introduced earlier:

\begin{itemize}
  \item \textbf{QSVDM:} the least-squares formulation in \eqref{eq:quad_opt} remains the same, and one simply 
        replaces $\mathbf{\Psi}_{\text{hom}}$ by $\mathbf{\Psi}_{\text{full}}$.
  \item \textbf{QGM:} the greedy selection procedure and the final step of constructing $\mathbf W$ remain 
identical, except that all evaluations of the decoder use $\mathbf{\Psi}_{\text{full}}$ instead of $\mathbf{\Psi}_{\text{hom}}$.
\end{itemize}
With this, the approximation \eqref{eq:nonlinearapprox} takes the form
\begin{equation}
  \label{eq:aug_quad_arapprox}
  \tilde{\mathbf{u}}
  = \mathbf V \mathbf q
  + \mathbf C
  + \mathbf L \mathbf q
  + \mathbf H\,\vecsym(\mathbf q \mathbf q^\top)
  \;\in\; \mathbb{R}^{\mathcal N},
\end{equation}
where
\[
  \mathbf W = [\,\mathbf C,\; \mathbf L,\; \mathbf H\,] \in \mathbb{R}^{\mathcal N \times m}.
\]
We emphasize that, here as well as in the original QSVDM and QGM formulations, 
nothing guarantees that the columns of $\mathbf W$ are linearly independent.

In \Cref{fig:toy_reconstruction}, we illustrate on the toy example \eqref{eq:testcasequadratic} that the full quadratic feature map allows both QSVDM and QGM to recover the affine component of the correction, which cannot be captured by the homogeneous quadratic map. This observation is validated again on a more rigorous test case in \Cref{sec:enh_num}.

\begin{figure}[H]
\centering
\begin{tikzpicture}
\begin{axis}[
    width=0.6\textwidth,
    xlabel={first component},
    ylabel={second component},
    legend style={
        at={(0.5,0.03)},
        anchor=south,
    }  ,
    grid=both
]
\addplot[only marks, black] table {data/data_points.dat};
\addlegendentry{data points};

\addplot[only marks, mark= x, blue] table {data/qsvdm.dat};
\addlegendentry{quadratic (homogeneous)};
\addplot[only marks, mark=triangle*, red] table {data/qsvdm_aug.dat};
\addlegendentry{quadratic (full)};
\end{axis}
\end{tikzpicture}
\caption{Comparison of quadratic approximation using homogeneous (in blue marks) and full quadratic (in red marks) feature maps on the toy example of \cref{sec:problem_formulation}. The QSVDM and QGM methods are equivalent for this example.}
\label{fig:toy_reconstruction}
\end{figure}
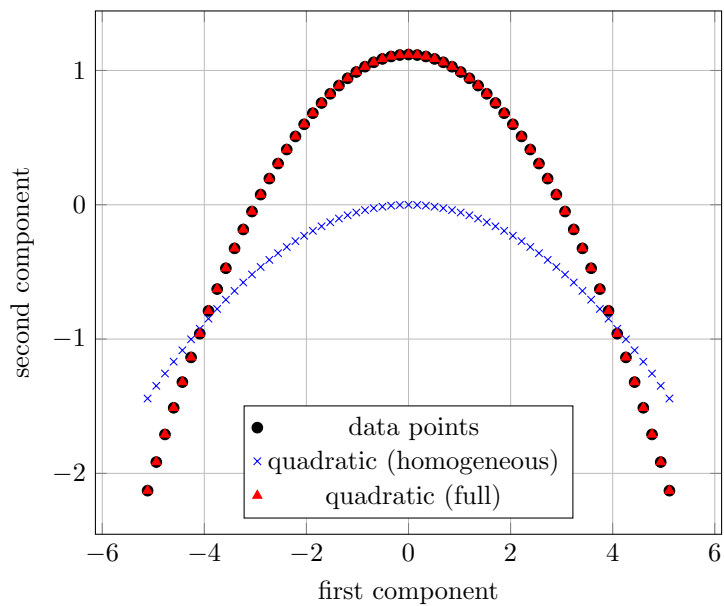

\section{ Numerical illustrations}
\label{sec:num}

This section presents numerical results that illustrate the properties and concepts developed in this work. The experiments are conducted on a representative test case: a parametrized elliptic PDE.

\subsection{Test case: A multiparameter problem}
\label{sec:test-case}

We consider the problem of designing a thermal fin to remove heat from a surface effectively, see {\emph \eg} \cite{rbpp}.
The two-dimensional fin, shown in Figure~\ref{fig:thermal-fin}, consists of a vertical central \textbf{post} and say $N_f$ horizontal \textbf{subfins}; the fin conducts heat from a prescribed uniform flux source at the root, $\mathbf{\Gamma_{\text{root}}}$, through the subfins to surrounding flowing air.

 The thermal conductivity $k_i$, for $i = 1, \ldots, N_f$ of the $i$-th subfin (normalized with respect to the post conductivity $k_0 = 1$) may depend on a parameter, \eg $k_i = \mu_i\in [0.1, 10]$, for $i = 1, \ldots, p-1$,  while $k_i = 1$ for $i = p, \ldots, N_f$. The Biot number, which quantifies convective heat transfer to ambient air, is denoted as
$\text{Bi}$ (larger $\text{Bi}$ implies better convection) and may also vary $\text{Bi} = \mu_p\in [0.01,1]$.
The parameter $\boldsymbol{\mu}=(\mu_1, \dots, \mu_p)$ may take values in the compact parameter domain $\mathcal{P} = ([0.1, 10]^{p-1}\times [0.01,1]) \subset \mathbb{R}^p$.

The full PDE (strong) formulation and boundary conditions are given in~\cite{ballout2024nonlinear}. The natural functional space is here $X = H^1(\Omega)$, and we recall here only the corresponding variational formulation: for a given $\boldsymbol{\mu} \in \mathcal{P}$, find $u(\boldsymbol{\mu}) \in X$ such that:
\begin{equation}
  \label{eq:varpb}
  a(u(\boldsymbol{\mu}), v; \boldsymbol{\mu}) = f(v), \quad \forall v \in X,
\end{equation}
where the bilinear and linear forms are given by:
\[
a(u, v; \boldsymbol{\mu}) = \sum_{i=0}^{N_f} k_i \int_{\Omega_i} \nabla u \cdot \nabla v \, d\boldsymbol{x} 
+ \text{Bi} \int_{\Gamma_{\text{ext}}} u v \, d\boldsymbol{x}, \quad
f(v) = \int_{\Gamma_{\text{root}}} v\, d\boldsymbol{x}.
\]

The domains $\Omega_i$ and boundaries $\Gamma_{\text{root}}$ and $\Gamma_{\text{ext}}$ are shown in Figure~\ref{fig:thermal-fin}, which illustrates the geometry and labeling used in this setup.

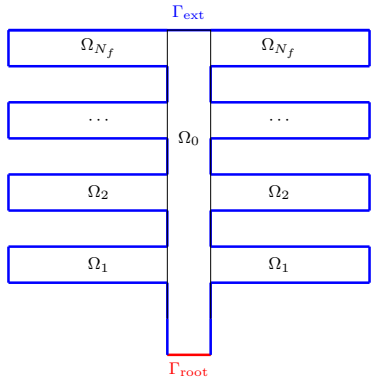
\begin{figure}[H]
\centering
\scalebox{0.8}{
\begin{tikzpicture}[scale=1.2, every node/.style={font=\footnotesize}]

\foreach \i/\y in {1/0.5, 2/1.5, 3/2.5, 4/3.5} {
  \draw[very thick, blue] (-2.5,\y) -- (-0.3,\y);
  \draw[very thick, blue] (-2.5,\y) -- (-2.5,\y+0.5);
  \draw[very thick, blue] (-2.5,\y+0.5) -- (-0.3,\y+0.5);
  \draw[very thick, blue] (0.3,\y) -- (2.5,\y);
  \draw[very thick, blue] (2.5,\y) -- (2.5,\y+0.5); 
  \draw[very thick, blue] (0.3,\y+0.5) -- (2.5,\y+0.5);
  \draw[very thick, blue] (-0.3,\y-0.5) -- (-0.3,\y);  
  \draw[very thick, blue] (0.3,\y-0.5) -- (0.3,\y);   
}

\draw[very thick, blue] (-2.5,4) -- (2.5,4);  

\node at (-1.25,0.75) {\(\Omega_1\)};
\node at (-1.25,1.75) {\(\Omega_2\)};
\node at (-1.25,2.75) {\(\cdots\)};
\node at (-1.25,3.75) {\(\Omega_{N_f}\)};

\node at (1.25,0.75) {\(\Omega_1\)};
\node at (1.25,1.75) {\(\Omega_2\)};
\node at (1.25,2.75) {\(\cdots\)};
\node at (1.25,3.75) {\(\Omega_{N_f}\)};

\draw[draw=black] (-0.3,-0.5) rectangle (0.3,4);
\node at (0.,2.5) {\(\Omega_0\)};
\draw[very thick, red] (-0.3,-0.5) -- (0.3,-0.5);
\node[below, red] at (0,-0.5) {\(\Gamma_{\text{root}}\)};

\node[above, blue] at (0,4.05) {\(\Gamma_{\text{ext}}\)};
\draw[very thick, blue] (-0.3,-0.5) -- (-0.3,0);   
\draw[very thick, blue] (0.3,-0.5) -- (0.3,0);   
\end{tikzpicture}
}
\caption{Schematic of thermal fin geometry with labeled subdomains \(\Omega_i\) and boundaries \(\Gamma_{\text{root}}\), \(\Gamma_{\text{ext}}\).}
\label{fig:thermal-fin}
\end{figure}

A finite element (FE) approximation is then applied as a high-fidelity method, where the solution $u(\boldsymbol{\mu}) \in X$  is approximated by $u_{\mathcal{N}}(\boldsymbol{\mu}) \in X_{\mathcal{N}} \subset X$. The solution manifold contained in $X$ (which is not directly computable) is represented by an approximation $\mathcal S = \{ u_{\mathcal{N}}(\boldsymbol{\mu}),\, \boldsymbol{\mu} \in \mathcal{P} \} \subset X_{\mathcal{N}}$.
To be consistent with Sections~\ref{sec:linear-rb} and~\ref{sec:taylor}, $ u_{\mathcal{N}}(\boldsymbol{\mu}) $ is represented by its coordinate (degrees of freedom) vector $ \mathbf{u}_{\mathcal{N}}(\boldsymbol{\mu}) $, allowing $ \mathcal S $ to be interpreted as a manifold in $X= \mathbb{R}^{\mathcal{N}}$.

\subsection{GSS} \label{GSS}
Here, we perform a comparison between multiple basis construction methods, namely POD, Greedy, and GSS, presented in Section~\ref{sec:linear-rb}. We aim to illustrate the advantages of GSS on the test case described in Section~\ref{sec:test-case}. For each reduced basis dimension \( N \in \{1, \ldots, N_{\max} = 56\} \), we compute the projection errors
\[
\left\| \mathbf{u}_{\mathcal{N}}(\boldsymbol{\mu}) - P_{\mathcal{V}_N} \big( \mathbf{u}_{\mathcal{N}}(\boldsymbol{\mu}) \big) \right\|_X,
\]
for all \( \boldsymbol{\mu} \in \mathcal{P}_{\text{test}} \subset \mathcal{P} \subset \mathbb{R}^6\), with \( |\mathcal{P}_{\text{test}}| = 1000 \). Here, \( \mathbf{M} = (a(\zeta_i, \zeta_j; \bar{\boldsymbol{\mu}}))_{i,j=1}^{\mathcal{N}} \) is the mass matrix used to define the \( X \)-norm, with \( \{ \zeta_i \}_{i=1}^{\mathcal{N}} \) denoting the finite element basis functions and \( \bar{\boldsymbol{\mu}} = (1, 1, 1, 1, 1, 0.1) \in \mathbb{R}^6 \) a fixed reference parameter. Then, in Figure~\ref{fig:bases-comparison}, we present the mean and maximum errors over \( \mathcal{P}_{\text{test}} \) for each value of \( N \).

\begin{figure}[H]
\centering
\begin{tikzpicture}

\begin{groupplot}[
    group style={
        group size=2 by 1,
        horizontal sep=2cm,
    },
    width=8cm,
    height=7cm,
    xlabel={$N$},
    ylabel={Relative Error},
    ymode=log,
    grid=both,
    xtick={1,6,...,56},
    legend pos=north east,
    legend style={
        draw,
        fill=white,
        rounded corners,
        font=\tiny,
    },
]

\nextgroupplot[
    title={Mean Error}
]
\addplot+[no markers, blue] table [x=N, y=Greedy_M500, col sep=comma] {data/mean_errors_lin.csv};
\addlegendentry{Greedy M=500}

\addplot+[no markers, teal] table [x=N, y=POD_M500, col sep=comma] {data/mean_errors_lin.csv};
\addlegendentry{POD M=500}

\addplot+[no markers, black] table [x=N, y=POD_M100, col sep=comma] {data/mean_errors_lin.csv};
\addlegendentry{POD M=100}

\addplot+[no markers, red] table [x=N, y=Greedy++_M100, col sep=comma] {data/mean_errors_lin.csv};
\addlegendentry{GSS $M_1$=500, $M_2$= 100}

\nextgroupplot[
    title={Max Error}
]
\addplot+[no markers, blue] table [x=N, y=Greedy_M500, col sep=comma] {data/max_errors_lin.csv};
\addlegendentry{Greedy M=500}

\addplot+[no markers, teal] table [x=N, y=POD_M500, col sep=comma] {data/max_errors_lin.csv};
\addlegendentry{POD M=500}

\addplot+[no markers, black] table [x=N, y=POD_M100, col sep=comma] {data/max_errors_lin.csv};
\addlegendentry{POD M=100}

\addplot+[no markers, red] table [x=N, y=Greedy++_M100, col sep=comma] {data/max_errors_lin.csv};
\addlegendentry{GSS $M_1$=500, $M_2$= 100}

\end{groupplot}

\end{tikzpicture}
\caption{Comparison of mean and max relative errors for different basis construction algorithms.}
\label{fig:bases-comparison}
\end{figure}
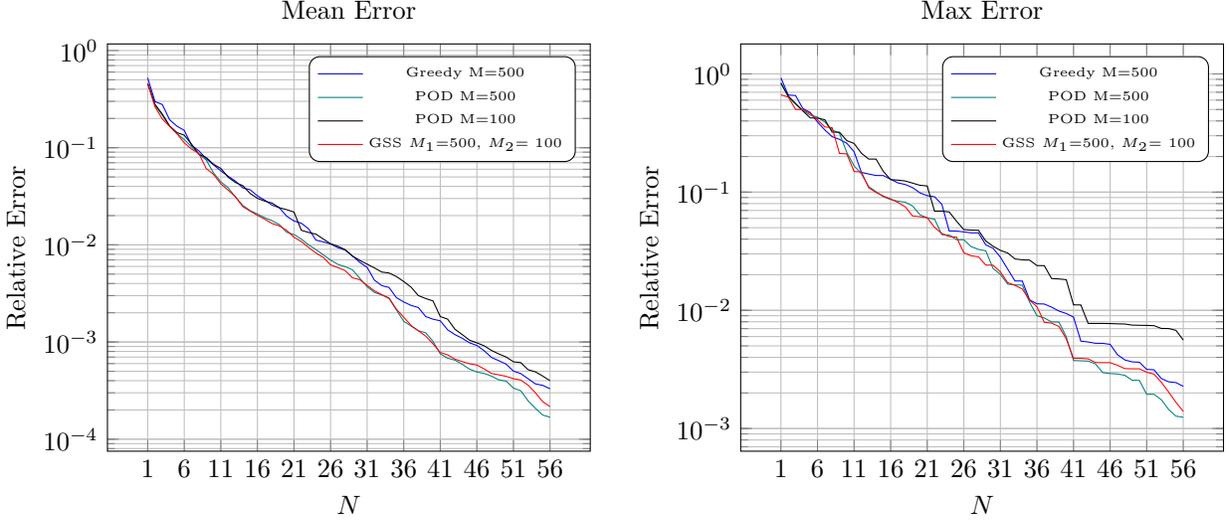

In this test case, all sampling procedures are performed using a uniform random distribution to construct the training sets for both the Greedy and POD methods.

The improvement of GSS over standard Greedy is visible in both the mean and maximum errors. For instance, to reach a mean error of \(10^{-3}\), GSS achieves a reduction of approximately 5 basis functions compared to the standard Greedy method. This improvement is attributed to the post-processing SVD/POD compression step.

GSS is also compared with POD using \(M = 100\) and \(M = 500\) training samples. Since POD requires solving \(M\) high-fidelity problems, a fair comparison with GSS using \(M_2 = 100\) is against POD with \(M = 100\). In this case, GSS exhibits a significant improvement—reducing the mean error by up to 7 modes and the maximum error by up to 15 modes for certain precision levels. This demonstrates that the greedy sampling strategy leads to a more relevant selection of parameters than the uniform sampling used here.

POD with \(M = 500\) is included for reference only; it shows that to achieve performance comparable to GSS, one needs \(M \gg M_2\), leading to a significantly more expensive procedure.

\subsection{Convergence towards the Taylor expansion}
\label{sec:conv.taylor}

Below, we validate numerically the theory developed in \Cref{sec:taylor} 
on the test case~\ref{sec:test-case}. 
In particular, we illustrate the convergence results 
\cref{eq:basis_convergence} and~\cref{eq:coeff_convergence}, 
as well as the local bijectivity property stated in \Cref{prop:Ck_diffeo_ball}.

\subsubsection{Differentiation with respect to the parameter}
\label{sec:param.diff}

To compute the Taylor approximation of a solution near a given parameter, or to represent the tangent plane at a given point, it is essential to differentiate the solution map $u: \boldsymbol{\mu}=(\mu_1, \dots, \mu_p) \mapsto u(\boldsymbol{\mu})$. The differentiability of \( u \) follows from the differentiability of the bilinear form \( a(\cdot, \cdot; \boldsymbol{\mu}) \) with respect to \( \mu_i \), $i=1,\dots, p$ (see \cite{bookRb, hesthaven2016certified, maday2020reduced} Sect. 5.3 for the proof). If we express the bilinear form $a(\cdot, \cdot; \boldsymbol{\mu})$ in \eqref{eq:varpb}, in an affine decomposition form:

\begin{equation}
\label{eq:affinedec}
a(u, v; \boldsymbol{\mu}) = \sum_{q=1}^{Q} \theta_q(\boldsymbol{\mu}) a_q(u, v),
\end{equation}
then, by differentiating \eqref{eq:varpb}, the partial derivative \( \frac{\partial u}{\partial \mu_i}(\mu) \) satisfies the following variational problem (since $a(\cdot, \cdot; \boldsymbol{\mu})$ is bilinear and $f$ is independent of $\boldsymbol{\mu}$):
\begin{equation}
\label{eq:partial_derivatives}
a( \frac{\partial u}{\partial \mu_i}(\boldsymbol{\mu}), v; \boldsymbol{\mu}) = - \sum_{q=1}^{Q} \frac{\partial \theta_q}{\partial \mu_i} (\boldsymbol{\mu}) a_q(u(\boldsymbol{\mu}), v), \quad \forall v \in X.
\end{equation}
For higher-order derivatives, one must differentiate the variational problem recursively to obtain a new variational problem for each derivative. 
Similarly to what was done for the variational problem \eqref{eq:varpb}, a FE approximation of the derivatives in  $X_{\mathcal{N}}$  can be obtained using a Galerkin approach.

\subsubsection{Convergence of the SVD modes}
\label{sec:conv.svd}

In \ref{sec:tangentSpace}, we showed that the first $p$ SVD modes $\mathbf{U}_{(1)}^r$ of the centered snapshot matrix $\mathbf{S}^r$ converge, up to an orthogonal transformation, as $r \to 0$, to the SVD modes $\mathbf{U}_{(1)}^*$ of the matrix $\mathbf{J}_U(\boldsymbol{0})\cdot \mathbf{P}$, which span the tangent space $T_0^*$. This convergence, stated in~\cref{eq:basis_convergence} and illustrated in Figure~\ref{fig:svd_convergence}, is linear in $r$, \ie, $\mathcal{O}(r)$.  We note that when $r^2 \leq \epsilon$, rounding errors are expected to dominate the total error, where $\epsilon$ is approximately $10^{-16}$ in double precision. Additionally, for certain specific samplings, the convergence rate can improve. For instance, for $p=1$, if $\sum_{i=1}^M (\mu^i)^3 = 0$, the convergence is $\mathcal{O}(r^2)$, whereas in general, it remains $\mathcal{O}(r)$ as mentioned before. 

\begin{figure}[H]
\centering
\includegraphics[width=\textwidth]{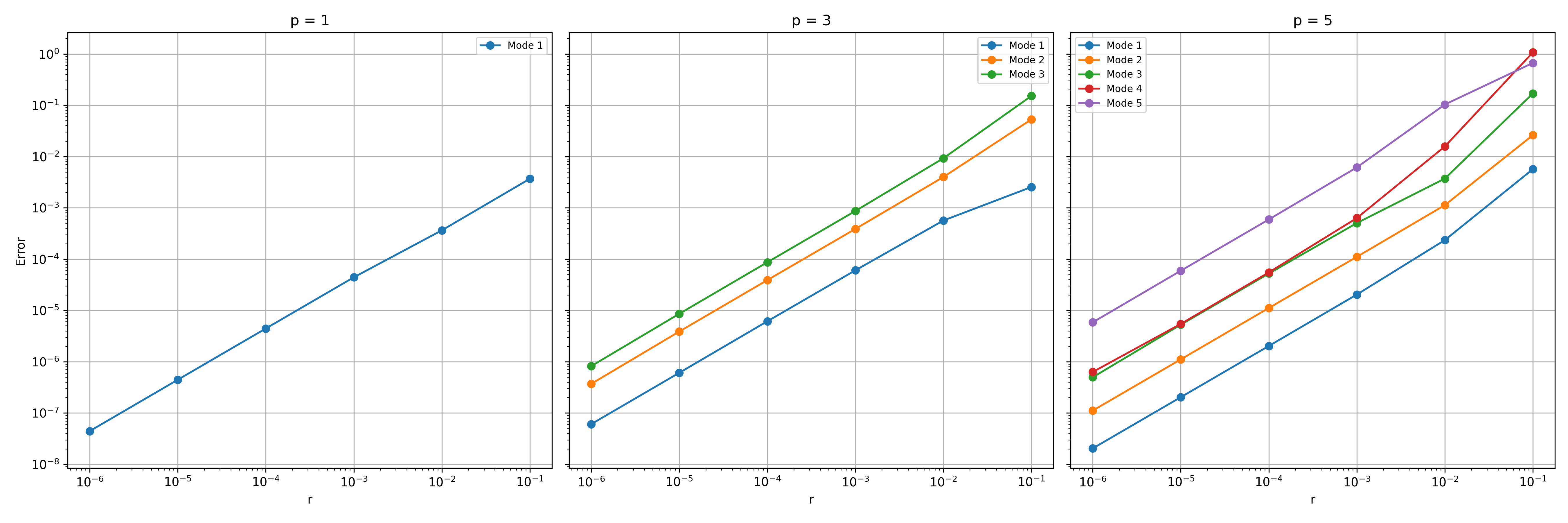}
\caption{Convergence of the first  $p$  SVD modes of $\mathbf{S}^r$ for dimension $p \in \{1,3,5\} .$}
\label{fig:svd_convergence}
\end{figure}

Similarly, in Figures~\ref{fig:svd_convergence_order2_deflated} and~\ref{fig:svd_convergence_order2}, we illustrate the numerical convergence of the second block of $m = \tfrac{p(p+1)}{2}$ left singular vectors of the snapshot matrix~$\mathbf{S}^r$ toward the curvature modes discussed in Section~\ref{sec:taylor.order2}.

In the filtered case, where the projection $(\mathbf I-\Pi_0)$ removes the tangent contribution, the first m singular vectors $\widehat{\mathbf{U}}_{(2)}^r$ of the projected matrix $\widehat{\mathbf S}^{r}$ converge to the modes $\mathbf{U}_{(2)}^*$ that span the curvature subspace $\mathcal C$ at the expected linear rate~$\mathcal{O}(r)$.

In the non-filtered case, we observe a similar convergence behavior for the second block of modes $\mathbf{U}_{(2)}^r$ of $\mathbf S^r$, even though a rigorous proof is not yet established.

Furthermore, we note that for \( r^4 \leq \epsilon \), roundoff errors are expected to influence the numerical results, limiting the observed convergence rate.

\begin{figure}[H]
\centering
\includegraphics[width=\textwidth]{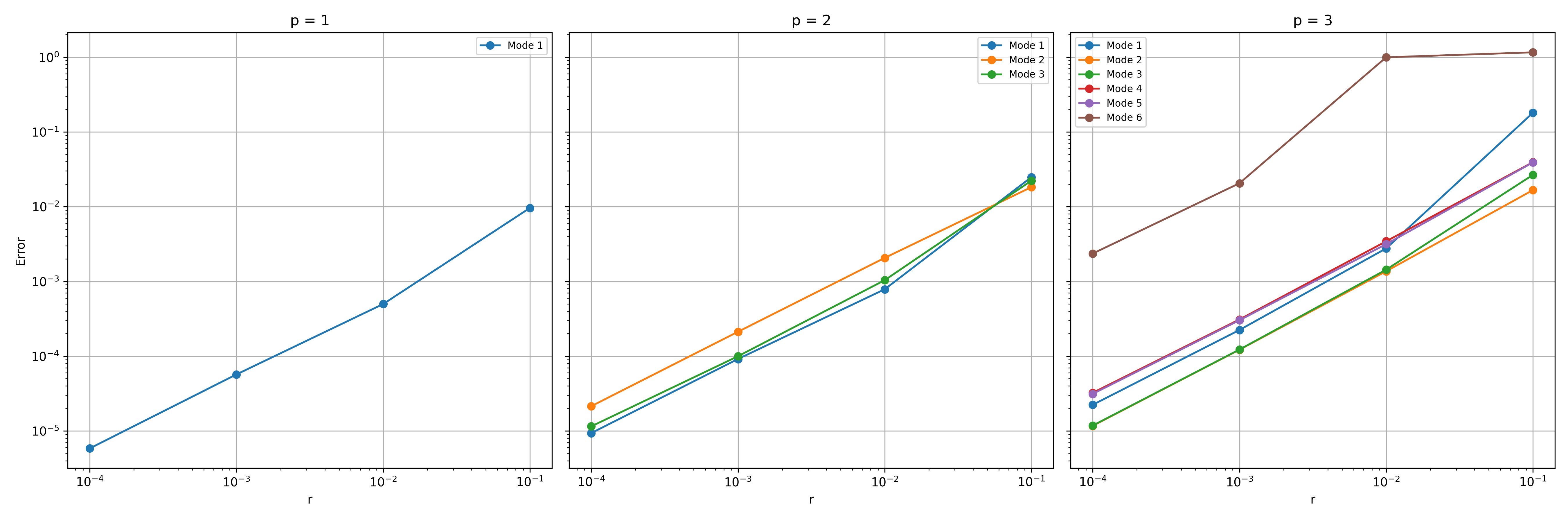}
\caption{Convergence of the first $m$ SVD modes of $\widehat{\mathbf S}^{r} = (\mathbf I-\Pi_0)\mathbf{S}^r$  for dimension $p \in \{1,2,3\}.$}
\label{fig:svd_convergence_order2_deflated}
\end{figure}

\begin{figure}[H]
\centering
\includegraphics[width=\textwidth]{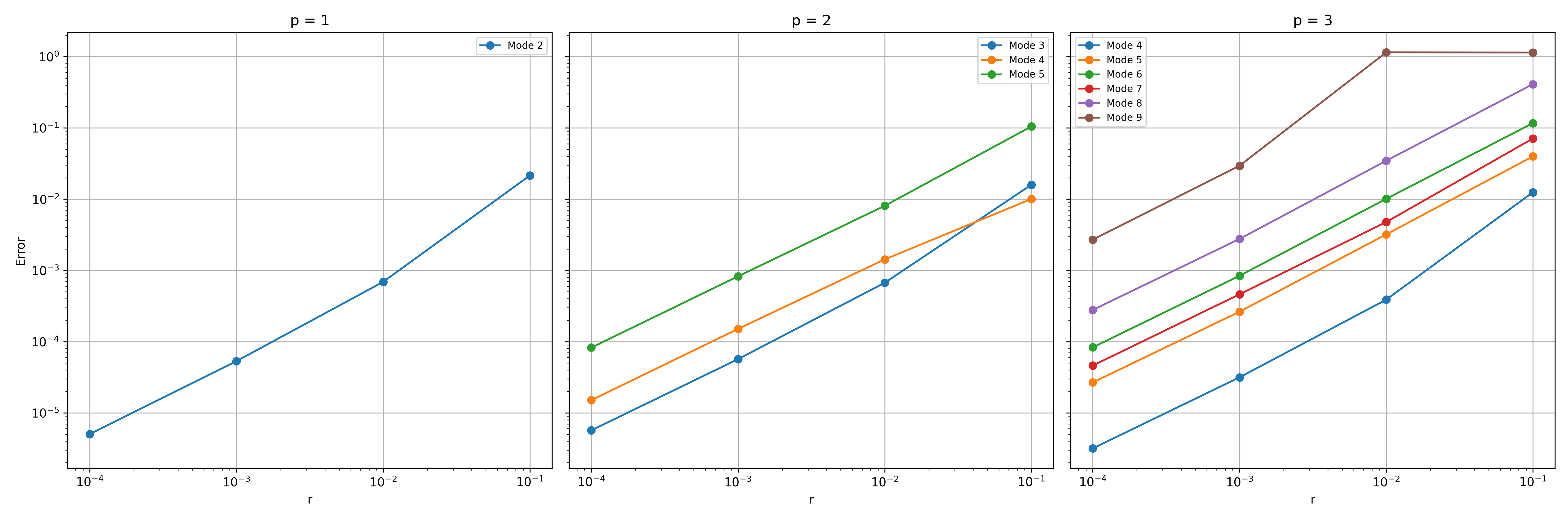}
\caption{Convergence of the second block of SVD modes of $\mathbf S^r$ for dimension $p \in \{1,2,3\}.$}
\label{fig:svd_convergence_order2}
\end{figure}

\subsection{Recovering the high RB modes from the low ones}

In the following numerical experiments, the Biot number varies in the interval $[0.01, 1]$ and the thermal conductivities $k_i$ in $[0.1, 10]$ for $i = 1, \dots, 5$. For the case $p=1$, we vary only the Biot number while keeping the thermal conductivities fixed. For $p > 1$, we vary the conductivities $k_i$ for $i = 1, \dots, p-1$, in addition to the Biot number.

Thanks to the bijection established (locally) in \Cref{sec:bijection}, the parameter $\boldsymbol{\mu}$ and therefore the higher coefficients $\{\alpha_{k}(\boldsymbol{\mu})\}_{k=n+1}^{N}$ can be inferred directly from the first $n$ coefficients. Accordingly, for $k = n+1, \dots, N$, the following relation holds:
\[
\alpha_k(\boldsymbol{\mu}) = \psi^k(\alpha_1(\boldsymbol{\mu}), \dots, \alpha_n(\boldsymbol{\mu})),
\]
where each $\psi^k$ denotes a nonlinear function. 

In Sections \ref{sec:tangentSpace} and \ref{sec:conv.svd}, we showed that one can locally choose $n =p$ when working with a centered manifold $\mathcal{S} - u^*$.
However, in the global formulation — \ie, when approximating $\mathcal{S}$ directly — one may need to choose $n = p + 1$ even in a local neighborhood, i.e, in a small parametric domain $\mathcal{P}$.
In the following numerical experiments, we adopt the latter non-centered approach. 

Another fundamental step is the approximation of the functions $\psi^k$, which constitutes a regression task. The goal is to learn a nonlinear map $\widehat{\psi}: \mathbb{R}^n \rightarrow \mathbb{R}^{N-n}$ such that:

\begin{equation}
\label{eq:nonlinear-function}
  \widehat{\psi}(\alpha_1, \dots, \alpha_n) \approx (\alpha_{n+1}, \dots, \alpha_N) = (\psi^{n+1}(\alpha_1, \dots, \alpha_n), \dots,\psi^{N}(\alpha_1, \dots, \alpha_n))
\end{equation}

\begin{figure}[H]
    \centering
    \begin{subfigure}[t]{0.3\textwidth}
        \centering
        \includegraphics[width=\linewidth]{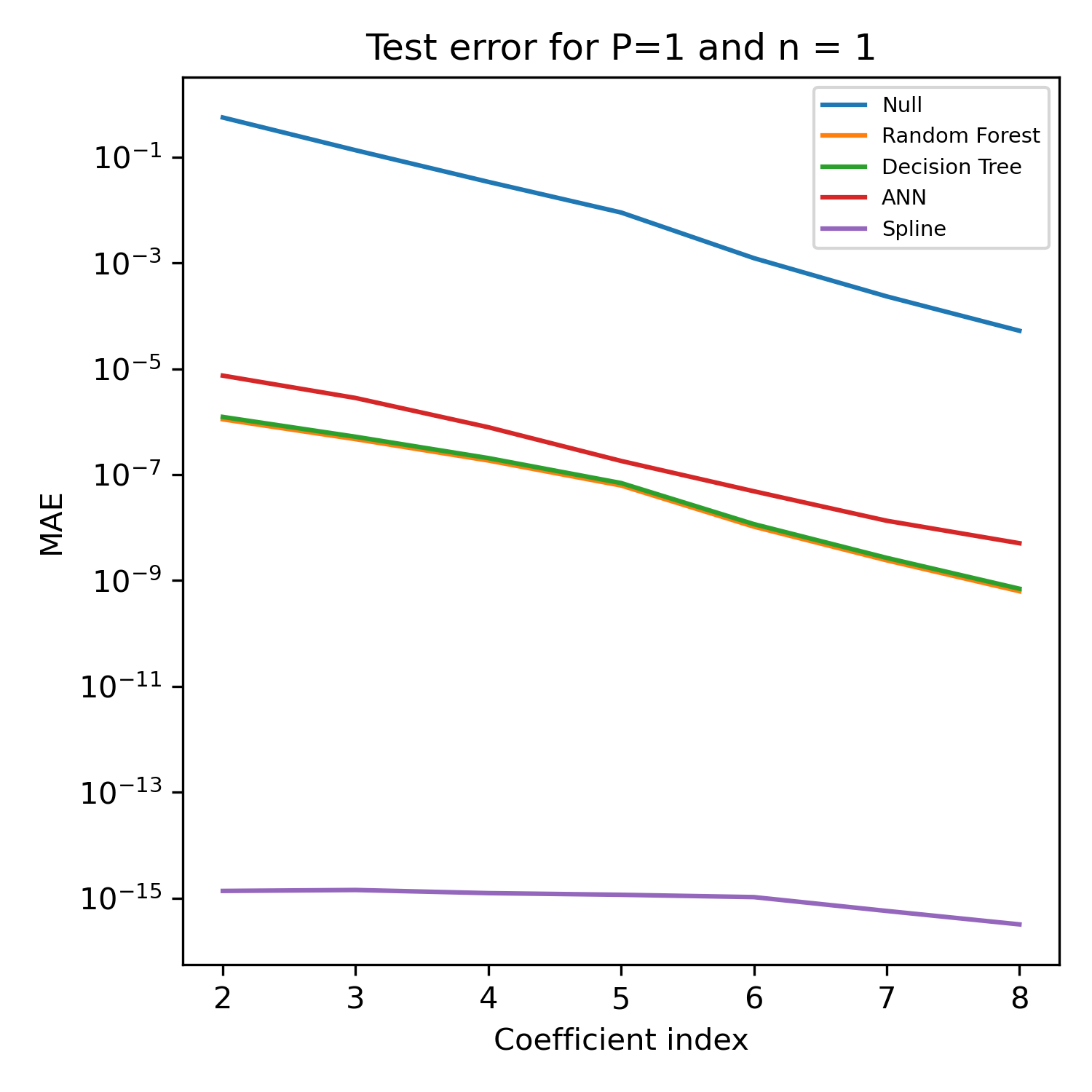}
        \caption{$p=1$, $n=1$}
        \label{fig:ML-p1n1}
    \end{subfigure}
    \begin{subfigure}[t]{0.3\textwidth}
    \centering
        \includegraphics[width=\linewidth]{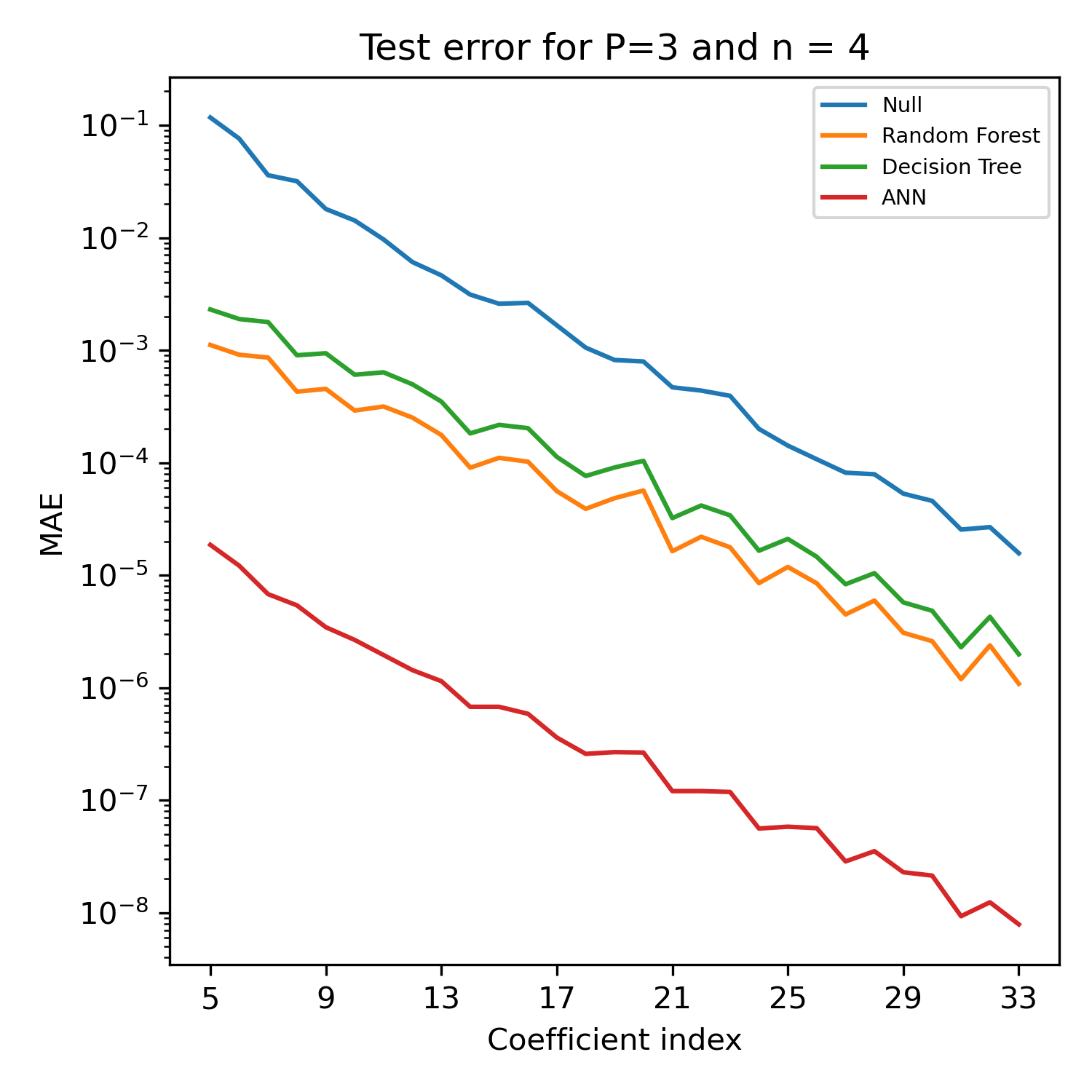}
        \caption{$p=3$, $n=4$}
        \label{fig:ML-p3n4}
    \end{subfigure}
    \begin{subfigure}[t]{0.3\textwidth}
        \centering
        \includegraphics[width=\linewidth]{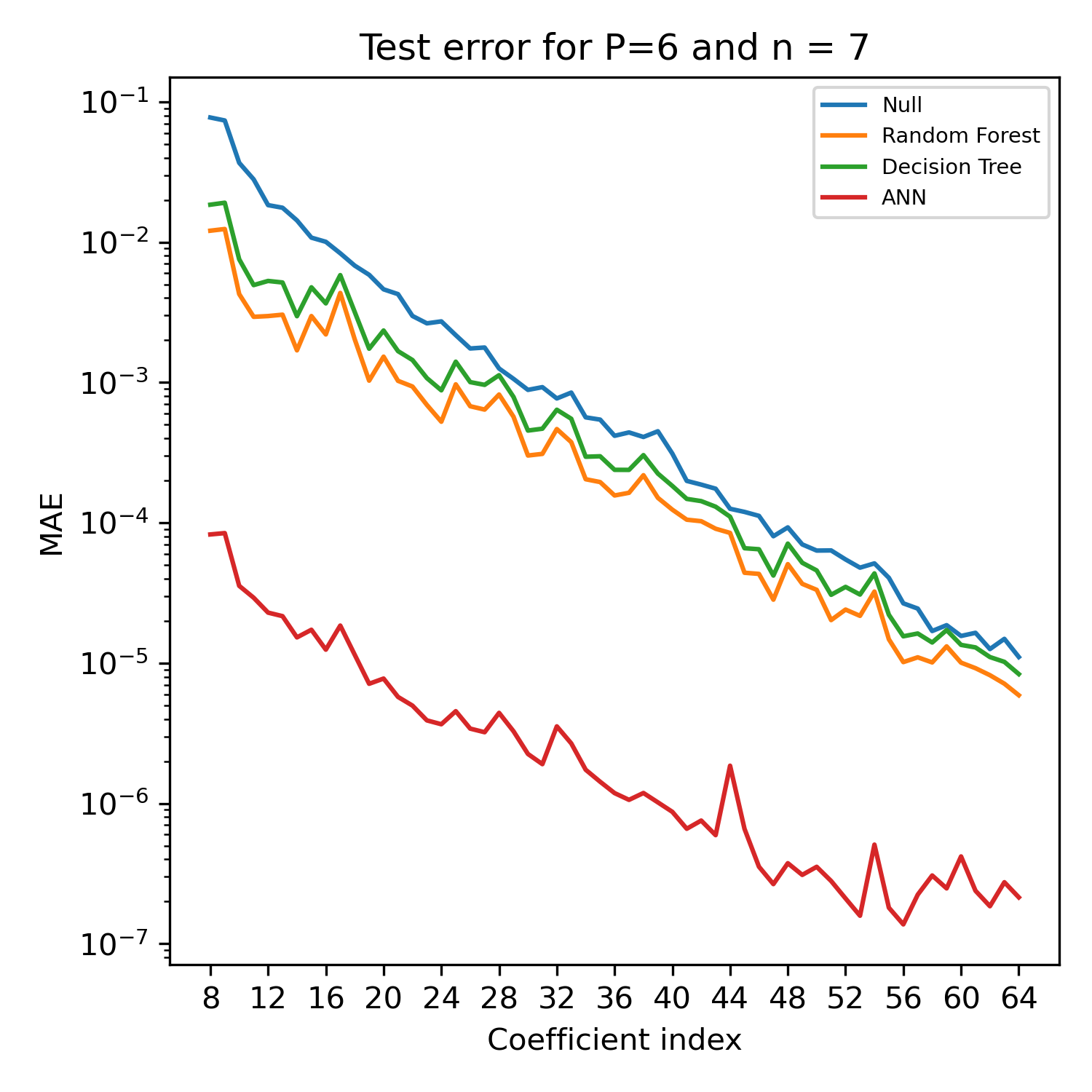}
        \caption{$p=6$, $n=7$}
        \label{fig:ML-p6n7}
    \end{subfigure}
    \caption{Mean absolute error on a test set for different ML models and $p\in \{1,3,6\}$. Each panel shows the comparison of various regression models (decision trees, random forests, splines, polynomial regression, and neural networks) for different parameter dimensions.}
    \label{fig:ML-errors}
\end{figure}

For this task, we tested several classical machine learning models, namely decision trees, random forests, splines, polynomial regression, and neural networks. These models are trained offline on a dataset of size $9 \times 10^5$. Importantly, this dataset is generated using the classical RBM and is independent of the high-fidelity dimension $\mathcal{N}$.
In \cref{fig:ML-errors}, for the univariate case ($p=1$, $n=1$, \cref{fig:ML-p1n1}), spline regression achieves the best accuracy, although other models also reach a mean relative error around $10^{-5}$.
  In the multivariate setting ($p>1$, \cref{fig:ML-p3n4,fig:ML-p6n7}), where learning models based on splines perform poorly (due to the curse of dimensionality), neural networks outperform the other models and exhibit greater robustness to increasing dimensionality.
Two main considerations guide the choice of the regression model. The first is accuracy, as the total error is primarily driven by the regression error. The second is the inference cost, which matters in the online phase. For this latter aspect, decision trees or parallelized random forests are favorable options, as their inference complexity depends mainly on the depth of the trees.

\subsection{Enhanced approximation with full quadratic ansatz}
\label{sec:enh_num}
In this section, we apply the quadratic approximation methods introduced in
Section~\ref{sec:quadapprox}, as well as the augmented full quadratic feature map
proposed in Section~\ref{sec:enhanced_quad}, to the test case described in
Section~\ref{sec:test-case}. As is suggested in~\cite{greedy_quad}, the snapshot matrix is first centered by subtracting its empirical mean, and the reduction is performed on this mean-free data. 

The goal is not to assess or promote the efficiency of the quadratic manifold
approximations, but rather to demonstrate that, whenever such methods are
employed, there is no reason to discard the affine terms in the reduced
coefficients. This is clearly illustrated in Figure~\ref{fig:quadP2}: using the augmented feature map improves the reconstruction, yielding up to one order of accuracy compared to the purely homogeneous quadratic map.  
The magnitude of this gain naturally depends on the problem, but the example shows that constant and linear contributions in the correction cannot, in general, be ignored.


\begin{figure}[htbp]
\centering
\begin{tikzpicture}
\begin{axis}[
    width=0.7\textwidth,
    xlabel={Reduced dimension $n$},
    ylabel={Relative error},
    ymode=log,
    grid=both,
    legend pos=south west
]
\addplot table[x=r,y=linear] {data/errors_centered.dat};
\addlegendentry{Linear}

\addplot table[x=r,y=QSVDM] {data/errors_centered.dat};
\addlegendentry{QSVDM}

\addplot table[x=r,y=QGM] {data/errors_centered.dat};
\addlegendentry{QGM}

\addplot table[x=r,y=QGM_augmented] {data/errors_centered.dat};
\addlegendentry{QGM augmented}
\end{axis}
\end{tikzpicture} 
\caption{Relative approximation errors of the linear approximation and the 
different quadratic variants as the reduced dimension $n$ increases.  
The parametric dependence involves two parameters, $\text{Bi}\in [0.01,1]$ and $k_{1}\in [0.1,10]$.}
\label{fig:quadP2}
\end{figure}
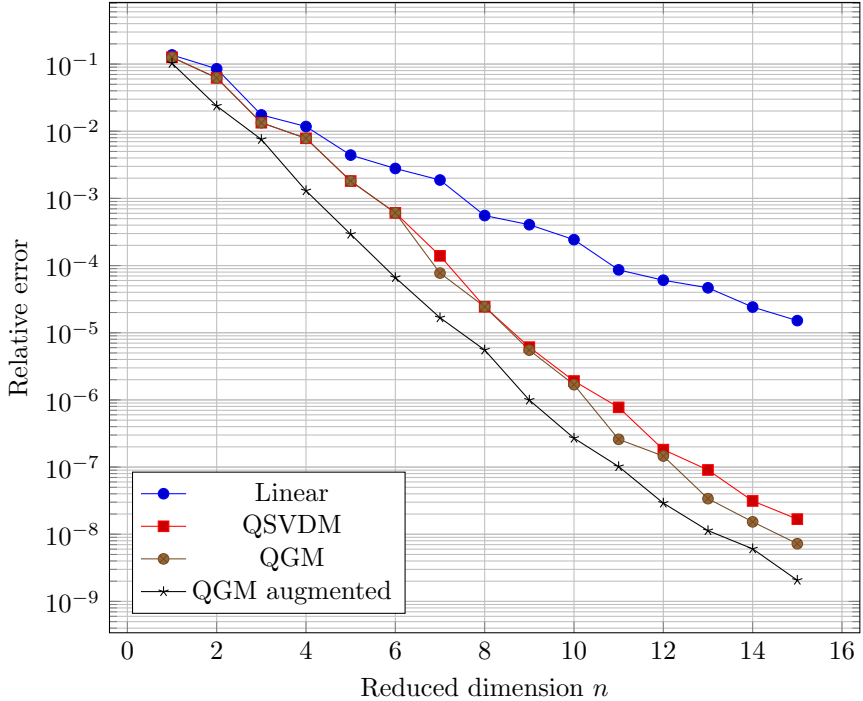

 Note that, for some applications, the centering preprocessing step can be crucial for the performance of quadratic approximation methods. This is notably the case for the advecting wave test case considered in~\cite{greedy_quad}, where the preprocessing centering step is explicitly performed and documented (in the given implementation) as having a strong impact on the performance of QGM. While homogeneous quadratic feature maps fail to compensate for non-centered data, the augmented full quadratic feature map naturally accounts for such affine effects.

It is also worth noting that, unlike the NCRBA, whose effective reduced
dimension satisfies $n \approx n_s$, quadratic approximation methods require
$n \gg n_s$ to perform adequately over wide parameter ranges.  
Consequently, they are not expected to be competitive in such regimes. For an extensive comparison between these methods, we refer to \cite{ballout2025combined}.

\section{Conclusion}

This paper contributes to the development of nonlinear reduced-basis approximation methods. More specifically, we focus on techniques that, in the spirit of the Gelfand widths framework, rely on approximations expressed as a truncated expansion with $N$ terms that has been named the Nonlinear Compressive Reduced Basis Approximation. 

In this representation, the first $n$ terms correspond to genuine unknowns—i.e., degrees of freedom (dof) of the discrete problem to be solved—while the remaining $\overline n = N-n$ terms are expressed explicitly in terms of the first $n$ dof.

We first prove that, if the underlying problem depends on $p$ parameters, locally the choice $n=p$ is the right one when a centered expansion is chosen (see subsection \ref{sec:svd}). By continuity, this shows that
it is sufficient to choose $n\ge p$, with $n$ of the order of $p$, in order to characterize the approximation properly. The total number of terms $N$, and hence $\overline n$, is then selected solely to ensure the desired approximation accuracy, which is itself related to the Kolmogorov linear dimension. Note that this result is associated with the inverse problem: is it possible to retrieve $\boldsymbol{\mu}$ from the knowledge of the RB coefficients $\alpha_{i,\boldsymbol{\mu}}$, $i=1,\dots, n$, see \eqref{eq:CRBsol} ?

Since the construction of the (linear) reduced basis is a problem in its own right—traditionally addressed either through a singular value decomposition (SVD) approach or through greedy algorithms—we propose, in subsection \ref{GSS}, as a side result, a hybrid strategy that combines the advantages of both methods to further improve the quality of the reduced basis.

The second major contribution of this article is to explain why expressing the high modes as quadratic functions of the low modes arises naturally, but this quadratic relationship is only valid locally. Consequently, to achieve accurate approximations over a wide range of parameter variations, it is necessary to consider more complex representations. These may be provided, for instance, by statistical learning (AI-based) approaches or by more sophisticated functional expressions, such as those proposed in \cite{bensalah2025nonlinear}.

Finally, the theory developed in this work delineates what can be achieved at a purely theoretical level, namely the selection of $n$ as a function of $p$ to solve the above inverse problem, together with explicit expressions of the coefficients indexed from $n+1$ to $N$ as a function of the first $n$ ones. In practice, however, for a prescribed target accuracy $\varepsilon >0$, the approximation error must be interpreted as the combination of two distinct contributions :
\begin{itemize}
\item The first contribution is associated with the truncation level $N$ of the linear expansion used to represent the approximation, whether in an SVD basis or in a more suitable reduced basis. 
\item The second contribution arises from the approximation error incurred when expressing the high-mode coefficients as functions of the $n$ low-mode coefficients.
\end{itemize}
When this latter dependence is obtained via a learning-based procedure whose accuracy depends on the size of the available dataset, the resulting error is inherently relative. Consequently, for a fixed dataset, the absolute error on the reconstructed coefficients improves as the magnitude of these coefficients decreases. This practical constraint naturally leads to selecting values of $n$ that may exceed the theoretical limit—of order $p$—suggested by the analysis presented in this paper.

Therefore, practical considerations, and in particular the use of a posteriori error estimators, must be regarded as a necessary complement to the theoretical framework developed here. The systematic incorporation of such tools lies beyond the scope of the present work and is the subject of ongoing investigations.

\appendix
\label{appendix}

\section{Different notions of principal angles and the connection with  Davis--Kahan $\sin\Theta$ Theorem}
\label{appendix:principal_angles}
{\bf Principal Angles (Canonical Angles) between Subspaces.} Let $\mathcal U$ and $\mathcal V$ be two subspaces of $\mathbb{R}^n$ with $\dim(\mathcal U)=\dim(\mathcal V)=p$.  The {\sl principal angles} (also {\sl called canonical angles}) $\theta_1,\theta_2,\dots,\theta_p\in[0,\pi/2]$ between $\mathcal U$ and $\mathcal V$ are defined via the singular value decomposition (SVD). Let $\mathbf{Q}_{\mathcal U} \in\mathbb{R}^{n\times p}$ and $\mathbf{Q}_{\mathcal V}\in\mathbb{R}^{n\times p}$ be matrices whose columns form orthonormal bases of $\mathcal U$ and $\mathcal V$, respectively. Consider the $p\times p$ matrix $\mathbf{Q}_{\mathcal U}^T\,\mathbf{Q}_{\mathcal V}$. Its singular values $,\sigma_1,\sigma_2,\dots,\sigma_p,$ (in nonincreasing order $\sigma_1\ge \sigma_2\ge\cdots\ge\sigma_p\ge 0$) are the cosines of the principal angles. In particular, we define
$$\cos\theta_i = \sigma_i(\mathbf{Q}_{\mathcal U}^T\mathbf{Q}_{\mathcal V}) \quad (i=1,\dots,p),$$
so that $0\le \theta_1 \le \theta_2 \le  \dots \le  \theta_p \le \frac{\pi}{2}$. It is often convenient to collect these angles into a diagonal matrix
\[
\mathbf \Theta(\mathcal U,\mathcal V) = \operatorname{diag}(\theta_1,\dots,\theta_p),
\]
together with the associated matrices
\[
\sin\mathbf\Theta(\mathcal U,\mathcal V)=\operatorname{diag}(\sin\theta_1,\dots,\sin\theta_p),
\qquad
\cos\mathbf\Theta(\mathcal U,\mathcal V)=\operatorname{diag}(\cos\theta_1,\dots,\cos\theta_p).
\]
This matrix notation is the one commonly used in perturbation theory.

\noindent {\bf Geometrical interpretation.} Geometrically, $\theta_1$ is the smallest angle between any unit vector in $\mathcal U$ and any unit vector in $\mathcal V$, $\theta_2$ is the next smallest after "removing" the directions that achieved $\theta_1$ (more precisely by working in the intersections of $\mathcal U$ and $\mathcal V$ with the orthogonal complement of these directions), and so on, up to $\theta_p$ which is the largest angle. Equivalently, one can characterize $\theta_1$ variationally as
$$\cos\theta_1 = \max_{\mathbf{u}\in \mathcal U, \mathbf v\in \mathcal V} \frac{\mathbf u^T\mathbf v}{\|\mathbf u\| \|\mathbf v\|}$$
and $\theta_1$ is attained by some unit vectors $ \mathbf u_1\in \mathcal U$, $\mathbf v_1\in \mathcal V$ (called the {\sl first principal vectors}). Then recursively for $i=2,\dots,p$,

$$
\cos\theta_i = \max_{\substack{
  \mathbf u \in \mathcal U,\, \mathbf v \in \mathcal V,\\
\mathbf u \perp \mathbf u_1, \dots, \mathbf u_{i-1} \\
  \mathbf v \perp \mathbf v_1, \dots, \mathbf v_{i-1}
}} \frac{\mathbf u^T\mathbf v}{\|\mathbf u\| \|\mathbf v\|}
$$
with corresponding principal vectors $\mathbf u_i,\mathbf v_i$ in the subspaces orthogonal to the previous ones. In particular, $\theta_p$ (the largest principal angle) corresponds to the least aligned directions between $\mathcal U$ and $\mathcal V$.

If $\mathcal U$ and $\mathcal V$ share a nonzero intersection, then $\theta_1=0$; at the other extreme, if $\mathcal U$ is orthogonal to $\mathcal V$ (every $\mathbf u\in \mathcal U$ is orthogonal to every $\mathbf v\in \mathcal V$), then $\theta_p=\frac{\pi}{2}$ (and $\cos\theta_p=0$). More generally, principal angles characterize the relative orientation of $\mathcal U$ and $\mathcal V$ in $\mathbb{R}^n$ in an {\sl invariant} way (independent of the particular bases chosen for $\mathcal U$ and $\mathcal V$). They also coincide with the concept of canonical correlations in statistics (the cosines $\cos\theta_i=\sigma_i$ are the {\sl canonical correlation} coefficients between the subspaces spanned by $\mathbf Q_{\mathcal U}$ and $\mathbf Q_{\mathcal V}$).

It is interesting to note that the optimality conditions associated with the above max-problem lead to the following orthogonality conditions :
$$ \forall i,j, \quad i,j =1,\dots,p \quad \mathbf u_i^T\mathbf u_j = \delta_{i,j},\quad \mathbf u_i^T\mathbf v_j =\delta_{i,j} \cos\theta_i $$

From the above orthogonality conditions, a direct characterization of the maximum angle can be stated :

$$\cos\theta_p =\min_{\substack{\mathbf u\in \mathcal U,\mathbf v \in \mathcal V\\ \| \mathbf u \| =\| \mathbf v\| =1}} \max_{\substack{\mathbf w\in \mathcal U\cap \mathbf u^\perp, \mathbf z\in \mathcal V\cap \mathbf v^\perp \\ \mathbf w\neq 0, \mathbf z\neq 0}} \frac{|(\mathbf u+\mathbf w)^T(\mathbf v+\mathbf z)|}{\|\mathbf u+\mathbf w\| \| \mathbf v+\mathbf z\|},$$
that allows to define $\theta_{p}$ without involving the previous angles.

\noindent {\bf Remark (Matrices or sets of vectors).} The above definition applies equally to the column spaces (ranges) of two matrices, or to two sets of vectors, since these span subspaces. In practice, given two matrices $\mathbf A$ and $\mathbf B$ with column-space $\mathcal U=\mathrm{Range}(\mathbf A)$ and $\mathcal V=\mathrm{Range}(\mathbf B)$, one often computes an orthonormal basis for each (for example via QR factorization) and then obtains the principal angles between $\mathcal U$ and $\mathcal V$ via the SVD of $\mathbf{Q}_{\mathcal U}^T \mathbf{Q}_{\mathcal V}$. The same procedure works for two arbitrary finite sets of vectors spanning $\mathcal U$ and $\mathcal V$. Thus, we will speak of principal angles between subspaces, or between matrices $\mathbf A$ and $\mathbf B$, or between sets of vectors, interchangeably – the notion depends only on the two subspaces in question.

\noindent {\bf Extreme Angles and Distance Between Subspaces.} The smallest principal angle $\theta_1$ indicates the closest alignment between $\mathcal U$ and $\mathcal V$, whereas the largest principal angle $\theta_p$ indicates the worst-case misalignment. A useful quantitative measure of distance between subspaces is given by the largest principal angle. Indeed, $\sin\theta_p$ can be interpreted as the operator norm distance between the two subspaces on the Grassmann manifold. To make this precise, let $\mathbf P_{\mathcal U}$ and $\mathbf P_{\mathcal V}$ denote the orthogonal projection matrices onto $\mathcal U$ and $\mathcal V$, respectively. It can be shown that

$$\sin\theta_p = \| (\mathbf I-\mathbf P_{\mathcal U})\mathbf P_{\mathcal V} \|_2 = \| (\mathbf I-\mathbf P_{\mathcal V})\mathbf P_{\mathcal U} \|_2.$$
In other words, $\sin\theta_p$ equals the operator norm of the difference between the two projection operators $\mathbf P_{\mathcal U}$ and $\mathbf P_{\mathcal V}$ (this difference $\mathbf P_{\mathcal U} - \mathbf P_{\mathcal V}$ has the same singular values as $(\mathbf I-\mathbf P_{\mathcal U})\mathbf P_{\mathcal V}$). To see the intuition behind this fact, observe that for any unit vector $\mathbf v\in \mathcal V$, the quantity $\|\mathbf v - \mathbf P_{\mathcal U} \mathbf v\| = \|(\mathbf I-\mathbf P_{\mathcal U})\mathbf v\|$ is the distance from $\mathbf v$ to the subspace $\mathcal U$. The maximum of this distance (over all $\mathbf v\in \mathcal V$ with $\|\mathbf v\|=1$) is achieved when $\mathbf v$ is the principal vector corresponding to $\theta_p$. For that vector, $\mathbf v - \mathbf P_{\mathcal U} \mathbf v$ is a unit vector orthogonal to $\mathcal U$, meaning $\mathbf v$ is as “far” from $\mathcal U$ as possible. Thus
$$\max_{\mathbf v\in \mathcal V, \|\mathbf v\|=1} \|\mathbf v - \mathbf P_{\mathcal U} \mathbf v\|=\sin\theta_p.$$
By symmetry, the same holds with $\mathcal U$ and $\mathcal V$ swapped. This quantity $\sin\theta_p$ is sometimes called the gap between subspaces $\mathcal U$ and $\mathcal V$. It provides a natural measure of distance on the Grassmannian (the space of all $p$-dimensional subspaces of $\mathbb{R}^n$). For example, $\sin\theta_p=0$ if and only if $\mathcal U=\mathcal V$, and $\sin\theta_p=1$ if and only if $\mathcal U$ is orthogonal to $\mathcal V$. 

\noindent {\bf Davis–Kahan $\sin\Theta$ Theorem (Eigenvector Perturbation)}. Principal angles play a key role in perturbation bounds for invariant subspaces. A cornerstone result is the Davis–Kahan $\sin\Theta$ theorem, which gives an upper bound on the largest principal angle between two eigenspaces in terms of the perturbation norm. In one convenient form, it can be stated as follows. Let $\mathbf A$ be a real symmetric matrix and $\tilde {\mathbf A} = \mathbf A + \mathbf E$ be a perturbed version (with $\mathbf E$ symmetric as well). Suppose $\mathcal U$ is an invariant subspace of $\mathbf A$ spanned by $p$ eigenvectors of $\mathbf A$ (say, associated with eigenvalues $\lambda_1,\dots,\lambda_p$), and let $\tilde {\mathcal U}$ be the $p$-dimensional invariant subspace of $\tilde {\mathbf A}$ spanned by the corresponding $p$ eigenvectors of $\tilde {\mathbf A}$ (with eigenvalues denoted $\tilde\lambda_1,\dots,\tilde\lambda_p$). The Davis–Kahan theorem asserts that if the perturbed eigenvalues remain separated from the chosen eigenvalue cluster of $\mathbf A$ by a positive gap, namely
$$
\delta 
= \min \Big\{\, |\lambda_i - \tilde\lambda_j| 
\ :\ 1 
\le i \le p,\;\; p+1 \le j \le n \Big\}>0,
$$
then the subspace $\tilde {\mathcal U}$ (the perturbed eigenspace) stays close to $\mathcal U$; more quantitatively, the largest principal angle $\theta_{p}$ between $\mathcal U$ and $\tilde {\mathcal U}$ is bounded by

$$\|\sin \mathbf \Theta(\mathcal U,\tilde {\mathcal U})\|_2 = \sin\theta_{p}(\mathcal U,\tilde {\mathcal U}) \le \frac{\| \mathbf E\|_2}{\delta}.$$
In the above, $\|\cdot\|_2$ denotes the operator norm (spectral norm), and the same results hold for the Frobenius norm 
$$\|\sin \mathbf \Theta(\mathcal U,\tilde {\mathcal U})\|_F \le \frac{\| \mathbf E\|_F}{\delta}.$$
A useful practical variant replaces the two-sided gap $\delta$ by an easily computable one-sided gap involving only the eigenvalues of $\mathbf A$.  
Let
\(
\delta^* 
= \min \{\, |\lambda_i - \lambda_j| 
\ :\ 1 \le i \le p,\;\; p+1 \le j \le n \}
\)
denote the separation between the selected eigenvalue cluster of $\mathbf A$ and the remainder of its spectrum.  When the perturbation is sufficiently small so that $\|\mathbf E\|_2 < \delta^*$, one obtains the foollowing bound
\[
\|\sin \mathbf \Theta(\mathcal U,\tilde {\mathcal U})\|_F
\;\le\; \frac{2\|\mathbf E\|_F}{\delta^*}.
\]

This classical result is often referred to as the $\sin\Theta$ theorem of Davis and Kahan (1970) \cite{kahan}. It provides a concise and powerful way to bound the perturbation of an invariant subspace in terms of the perturbation of the matrix. We emphasize that the $\sin\Theta$ theorem applies specifically to symmetric (Hermitian) matrices; analogous bounds for non-symmetric or rectangular matrices involve singular vectors and utilize extensions due, e.g., to Wedin (1972), but those are beyond our present scope. (Here we restrict to the real, symmetric case as it suffices for our purposes).

\section{SVD modes convergence}
\label{appendix:svd_modes_convergence}
In this appendix, we discuss the convergence behavior of the left singular vectors (SVD modes) of a perturbed matrix 
\(\mathbf{A}^r \in \mathbb{R}^{n \times m}\) converging to a fixed matrix 
\(\mathbf{A} \in \mathbb{R}^{n \times m}\) as \( r \to 0 \) with 
\(\|\mathbf{A}^r - \mathbf{A}\| = \mathcal{O}(r)\). 
We assume that \( \mathrm{rank}(\mathbf{A}) = p\).

\subsection{Problem statement}
Let the singular value decompositions be given by
\[
\mathbf{A}^r = \mathbf{U}^r \boldsymbol{\Sigma}^r (\mathbf{Z}^r)^\top,
\qquad
\mathbf{A} = \mathbf{U} \boldsymbol{\Sigma} \mathbf{Z}^\top,
\]
with 
\[
\boldsymbol{\Sigma}^r = \mathrm{diag}(\sigma_1^r, \dots, \sigma_p^r, \dots),
\qquad 
\boldsymbol{\Sigma} = \mathrm{diag}(\sigma_1, \dots, \sigma_p, 0, \dots),
\]
where \( \sigma_1 \geq \cdots \geq \sigma_p > 0  \), and \( \sigma_p^r > 0 \) for all sufficiently small \(r\) (Weyl’s theorem). We are particularly interested in the first \(p\) left singular vectors 
\[
\mathbf{U}_{(1)}^r := \mathbf{U}^r_{(:,1:p)} =\begin{bmatrix}\boldsymbol{\varphi}_1^r ,& \cdots & ,\boldsymbol{\varphi}_p^r \end{bmatrix},
\qquad
\mathbf{U}_{(1)} := \mathbf{U}_{(:,1:p)} =\begin{bmatrix}\boldsymbol{\varphi}_1, & \cdots & ,\boldsymbol{\varphi}_p
\end{bmatrix},
\]
and we ask whether
\[
\|\mathbf{U}_{(1)}^r - \mathbf{U}_{(1)}\| \to 0,
\]
as \( r \to 0\).

\subsection{Non-uniqueness of SVD modes}
The above question is subtle because the singular vectors are not uniquely defined. 
Two cases are of interest:

\begin{enumerate}
    \item \textbf{Simple singular value:}  
    If \( \sigma_j \) is simple, then the corresponding singular vector is unique only up to a sign flip. That is, both \( \boldsymbol{\varphi}_j\) and \(-\boldsymbol{\varphi}_j\) are valid singular vectors.

    \item \textbf{Multiple singular values:}  
    If \( \sigma_j = \cdots = \sigma_{j+k-1} \) is repeated, then the associated singular vectors span a subspace of dimension \(k\). Any orthonormal basis of this subspace is valid, so the singular vectors are determined only up to a \(k \times k\) orthogonal transformation.
\end{enumerate}

Thus, the matrix of singular vectors is only defined up to multiplication by an orthogonal transformation. Concretely, if \( \mathbf{U}_{(1)}' \in \mathbb{R}^{n \times p} \) is another valid choice of left singular vectors 
associated with the same singular values, then there exists an orthogonal matrix 
\( \mathbf{O} \in \mathbb{R}^{p \times p} \) such that
\[
\mathbf{U}_{(1)}' = \mathbf{U}_{(1)} \mathbf{O} .
\]

\subsection{Convergence up to orthogonal transformation}
For any two choices of SVD bases 
\(\mathbf{U}_{(1)}^r, \mathbf{U}_{(1)} \in \mathbb{R}^{n \times p}\), Davis–Kahan $\sin\Theta$ theorem ensures that the subspaces spanned by \(\mathbf{U}_{(1)}^r\) and \(\mathbf{U}_{(1)}\) converge:
\[
\|\sin \mathbf \Theta(\mathbf{U}_{(1)}^r,\mathbf{U}_{(1)})\|_F = \mathcal{O}(r).
\]
Hence, one can expect that there exists a sequence of orthogonal matrices 
\(\mathbf{O}_{(1)}^r \in O(p)\) such that
\[
\|\mathbf{U}_{(1)}^r - \mathbf{U}_{(1)} \mathbf{O}_{(1)}^r\|_F= \mathcal{O}(r).
\]

One can even go further by explicitly searching for the \emph{best} orthogonal 
transformation of \(\mathbf{U}_{(1)}\) that aligns it as closely as possible with 
\(\mathbf{U}_{(1)}^r\). Note that the corresponding transformed basis \(\mathbf{U}_{(1)} \mathbf{O}_{(1)}^r\) 
does not necessarily correspond to an SVD basis, but it provides 
the optimal alignment in the least-squares sense. 
This problem is precisely the classical \emph{orthogonal Procrustes problem}:
\[
\mathbf{O}_{(1)}^r = \arg\min_{\mathbf{O} \in O(p)} \|\mathbf{U}_{(1)}^r - \mathbf{U}_{(1)} \mathbf{O}\|^2_F.
\]
The solution is obtained via the SVD of
\[
\mathbf{U}_{(1)}^\top \mathbf{U}_{(1)}^r = \mathbf{P}^r \mathbf{D}^r (\mathbf{Q}^r)^\top,
\]
namely
\[
\mathbf{O}_{(1)}^r = \mathbf{P}^r (\mathbf{Q}^r)^\top.
\]
With this alignment, one obtains
\[
\|\mathbf{U}_{(1)}^r - \mathbf{U}_{(1)} \mathbf{O}_{(1)}^r\|_F \leq \sqrt{2} \|\sin  \mathbf \Theta(\mathbf{U}_{(1)}^r,\mathbf{U}_{(1)})\|_F = \mathcal{O}(r).
\]

\section*{Acknowledgments}

As part of the “France 2030” initiative, this work has benefited from a national grant managed by the French National Research Agency (Agence Nationale de la Recherche), attributed to the ExaMA project of the NumPEx PEPR program under reference ANR-22-EXNU-0002, and has also been partially funded by the European Research Council (ERC) under the European Union’s Horizon 2020 research and innovation program (grant No. 810367), project EMC2 (YM).

\clearpage

\bibliographystyle{plain}
\bibliography{main}

@article{DeVore1998,  
  author={DeVore, Ronald A}, 
  year = {1998},  
  title = {Nonlinear approximation},  
  journal = {Acta Numerica},  
  volume = {7},  
  pages = {51--150},  
  doi = {10.1017/s0962492900002816}  
}

@article{barnett2022quadratic,  
  author = {Barnett, Joshua and Farhat, Charbel},  
  year = {2022},  
  title = {Quadratic approximation manifold for mitigating the {K}olmogorov barrier in nonlinear projection-based model order reduction},  
  journal = {Journal of Computational Physics},  
  volume = {464},  
  pages = {111348}  
}

@misc{greedy_quad,
      title={Greedy construction of quadratic manifolds for nonlinear dimensionality reduction and nonlinear model reduction}, 
      author={Paul Schwerdtner and Benjamin Peherstorfer},
      year={2024},
      eprint={2403.06732},
      archivePrefix={arXiv},
      primaryClass={math.NA},
      url={https://arxiv.org/abs/2403.06732}, 
}

@article{barnett2023,  
  author = {Barnett, Joshua and Farhat, Charbel and Maday, Yvon},  
  year = {2023},  
  title = {Neural-network-augmented projection-based model order reduction for mitigating the {K}olmogorov barrier to reducibility},  
  journal = {Journal of Computational Physics},  
  volume = {492},  
  pages = {112420}  
}

@article{pp10,  
  author = {Cohen, Albert and Farhat, Charbel and Maday, Yvon and Somacal, Agustin},  
  title = {Nonlinear compressive reduced basis approximation for PDEs},  
  journal = {Comptes Rendus. Mécanique},  
  volume = {351},  
  number = {S1},  
  pages = {357--374},  
  year = {2023}  
}

@article{deVore2013,  
  author = {DeVore, Ronald and Petrova, Guergana and Wojtaszczyk, Przemyslaw},  
  year = {2013},  
  title = {Greedy algorithms for reduced bases in Banach spaces},  
  journal = {Constructive Approximation},  
  volume = {37},  
  number = {3},  
  pages = {455--466}  
}

@article{pp5,  
  author = {Franco, Nicola and Manzoni, Andrea and Zunino, Paolo},  
  title = {A deep learning approach to reduced order modelling of parameter dependent partial differential equations},  
  journal = {Mathematics of Computation},  
  volume = {92},  
  number = {340},  
  pages = {483--524},  
  year = {2023}  
}

@article{pp7,  
  author = {Kramer, Boris and Peherstorfer, Benjamin and Willcox, Karen E.},  
  year = {2024},  
  title = {Learning nonlinear reduced models from data with operator inference},  
  journal = {Annual Review of Fluid Mechanics},  
  volume = {56},  
  number = {1},  
  pages = {521--548}  
}

@article{lee2020model,  
  author = {Lee, Kookjin and Carlberg, Kevin T.},  
  year = {2020},  
  title = {Model reduction of dynamical systems on nonlinear manifolds using deep convolutional autoencoders},  
  journal = {Journal of Computational Physics},  
  volume = {404},  
  pages = {108973}  
}

@article{pp8,  
  author = {Peherstorfer, Benjamin},  
  year = {2022},  
  title = {Breaking the {K}olmogorov barrier with nonlinear model reduction},  
  journal = {Notices of the American Mathematical Society},  
  volume = {69},  
  number = {5},  
  pages = {725--733}  
}

@article{rbpp,  
  author = {Prud'homme, Christophe and Rovas, Dimitrios and Veroy, Karen and Machiels, Luc and Maday, Yvon and Patera, Anthony T. and Turinici, Gabriel},  
  year = {2002},  
  title = {Reliable Real-Time Solution of Parametrized Partial Differential Equations: Reduced-Basis Output Bound Methods},  
  journal = {J. Fluids Eng.},  
  volume = {124},  
  number = {1},  
  pages = {70--80}  
}

@incollection{ballout2024nonlinear,
  title={Nonlinear compressive reduced basis approximation for multi-parameter elliptic problem},
  author={Ballout, Hassan and Maday, Yvon and Prud’homme, Christophe},
  booktitle={Multiscale, Nonlinear and Adaptive Approximation II},
  pages={55--73},
  year={2024},
  publisher={Springer}
}

@book{bookRb,  
  author = {Quarteroni, Alfio and Manzoni, Andrea and Negri, Federico},  
  title = {Reduced basis methods for partial differential equations: An introduction},  
  year = {2015},  
  pages = {1--263}  
}

@article{pp3,  
  author = {Vitullo, Piermario and Colombo, Alessio and Franco, Nicola Rares and Manzoni, Andrea and Zunino, Paolo},  
  title = {Nonlinear model order reduction for problems with microstructure using mesh informed neural networks},  
  journal = {Finite Elements in Analysis and Design},  
  volume = {229},  
  pages = {104068},  
  year = {2024}  
}

@article{pp12,  
   author={DeVore, Ronald A and Howard, Ralph and Micchelli, Charles}, 
  year = {1989},  
  title = {Optimal nonlinear approximation},  
  journal = {Manuscripta Mathematica},  
  volume = {63},  
  pages = {469--478},  
  doi = {10.1007/BF01171759}  
}

@article{pesenson2016,  
  author = {Pesenson, I.},  
  year = {2016},  
  title = {Estimates of {K}olmogorov, {G}elfand and linear n-widths on compact {R}iemannian manifolds},  
  journal = {Proceedings of the American Mathematical Society},  
  volume = {144},  
  number = {7},  
  pages = {2985--2998},  
  doi = {10.1090/proc/13054},  
  url = {http://dx.doi.org/10.1090/proc/13054}  
}

@article{tyagi2013tangent,
  title={Tangent space estimation for smooth embeddings of riemannian manifolds },
  author={Tyagi, Hemant and Vural, El{\i}f and Frossard, Pascal},
  journal={Information and Inference: A Journal of the IMA},
  volume={2},
  number={1},
  pages={69--114},
  year={2013},
  publisher={OUP}
}

@article{kahan,
author = {Davis, Chandler and Kahan, W. M.},
title = {The Rotation of Eigenvectors by a Perturbation. III},
journal = {SIAM Journal on Numerical Analysis},
volume = {7},
number = {1},
pages = {1-46},
year = {1970},
}

@article{aamari2019nonasymptotic,
  title={Nonasymptotic rates for manifold, tangent space and curvature estimation},
  author={Aamari, Eddie and Levrard, Cl{\'e}ment},
  journal={The Annals of Statistics},
  volume={47},
  number={1},
  pages={177--204},
  year={2019},
  publisher={Institute of Mathematical Statistics},
  doi={10.1214/18-AOS1685}
}

@article{gilbert2023ca,
  title={{CA-PCA}: Manifold dimension estimation, adapted for curvature},
  author={Gilbert, Anna C and O'Neill, Kevin},
  journal={arXiv preprint arXiv:2309.13478},
  year={2023},
  note={Published version in SIAM Journal on Mathematics of Data Science},
  doi={10.48550/arXiv.2309.13478}
}

@article{ahmed2020breaking,
  title={Breaking the Kolmogorov barrier in model reduction of fluid flows},
  author={Ahmed, Shady E and San, Omer},
  journal={Fluids},
  volume={5},
  number={1},
  pages={26},
  year={2020},
  publisher={MDPI}
}

@article{benner2015survey,
  title={A survey of projection-based model reduction methods for parametric dynamical systems},
  author={Benner, Peter and Gugercin, Serkan and Willcox, Karen},
  journal={SIAM review},
  volume={57},
  number={4},
  pages={483--531},
  year={2015},
  publisher={SIAM}
}

@book{hesthaven2016certified,
  title={Certified reduced basis methods for parametrized partial differential equations},
  author={Hesthaven, Jan S and Rozza, Gianluigi and Stamm, Benjamin and others},
  volume={590},
  year={2016},
  publisher={Springer}
}

@article{cohen2016kolmogorov,
  title={Kolmogorov widths under holomorphic mappings},
  author={Cohen, Albert and DeVore, Ronald},
  journal={IMA Journal of Numerical Analysis},
  volume={36},
  number={1},
  pages={1--12},
  year={2016},
  publisher={Oxford University Press}
}

@article{dihlmann2011model,
  title={Model reduction of parametrized evolution problems using the reduced basis method with adaptive time-partitioning},
  author={Dihlmann, Markus and Drohmann, Martin and Haasdonk, Bernard},
  journal={Proc. of ADMOS},
  volume={2011},
  pages={64},
  year={2011}
}

@article{maday2013locally,
  title={Locally adaptive greedy approximations for anisotropic parameter reduced basis spaces},
  author={Maday, Yvon and Stamm, Benjamin},
  journal={SIAM Journal on Scientific Computing},
  volume={35},
  number={6},
  pages={A2417--A2441},
  year={2013},
  publisher={SIAM}
}

@article{san2015principal,
  title={Principal interval decomposition framework for POD reduced-order modeling of convective Boussinesq flows},
  author={San, Omer and Borggaard, Jeff},
  journal={International Journal for Numerical Methods in Fluids},
  volume={78},
  number={1},
  pages={37--62},
  year={2015},
  publisher={Wiley Online Library}
}

@article{borggaard2016goal,
  title={A goal-oriented reduced-order modeling approach for nonlinear systems},
  author={Borggaard, Jeff and Wang, Zhu and Zietsman, Lizette},
  journal={Computers \& Mathematics with Applications},
  volume={71},
  number={11},
  pages={2155--2169},
  year={2016},
  publisher={Elsevier}
}

@article{carlberg2015adaptive,
  title={Adaptive h-refinement for reduced-order models},
  author={Carlberg, Kevin},
  journal={International Journal for Numerical Methods in Engineering},
  volume={102},
  number={5},
  pages={1192--1210},
  year={2015},
  publisher={Wiley Online Library}
}

@article{peherstorfer2020model,
  title={Model reduction for transport-dominated problems via online adaptive bases and adaptive sampling},
  author={Peherstorfer, Benjamin},
  journal={SIAM Journal on Scientific Computing},
  volume={42},
  number={5},
  pages={A2803--A2836},
  year={2020},
  publisher={SIAM}
}

@article{geelen2022localized,
  title={Localized non-intrusive reduced-order modelling in the operator inference framework},
  author={Geelen, Rudy and Willcox, Karen},
  journal={Philosophical Transactions of the Royal Society A},
  volume={380},
  number={2229},
  pages={20210206},
  year={2022},
  publisher={The Royal Society}
}

@article{amsallem2012nonlinear,
  title={Nonlinear model order reduction based on local reduced-order bases},
  author={Amsallem, David and Zahr, Matthew J and Farhat, Charbel},
  journal={International Journal for Numerical Methods in Engineering},
  volume={92},
  number={10},
  pages={891--916},
  year={2012},
  publisher={Wiley Online Library}
}

@article{badias2017local,
  title={Local proper generalized decomposition},
  author={Bad{\'\i}as, Alberto and Gonz{\'a}lez, David and Alfaro, Iciar and Chinesta, Francisco and Cueto, Elias},
  journal={International Journal for Numerical Methods in Engineering},
  volume={112},
  number={12},
  pages={1715--1732},
  year={2017},
  publisher={Wiley Online Library}
}

@article{ladeveze2016reduced,
  title={On reduced models in nonlinear solid mechanics},
  author={Ladev{\`e}ze, Pierre},
  journal={European Journal of Mechanics-A/Solids},
  volume={60},
  pages={227--237},
  year={2016},
  publisher={Elsevier}
}

@article{ohlberger2013nonlinear,
  title={Nonlinear reduced basis approximation of parameterized evolution equations via the method of freezing},
  author={Ohlberger, Mario and Rave, Stephan},
  journal={Comptes Rendus Mathematique},
  volume={351},
  number={23-24},
  pages={901--906},
  year={2013},
  publisher={Elsevier}
}

@article{mojgani2017lagrangian,
  title={Lagrangian basis method for dimensionality reduction of convection dominated nonlinear flows},
  author={Mojgani, Rambod and Balajewicz, Maciej},
  journal={arXiv preprint arXiv:1701.04343},
  year={2017}
}

@article{reiss2018shifted,
  title={The shifted proper orthogonal decomposition: A mode decomposition for multiple transport phenomena},
  author={Reiss, Julius and Schulze, Philipp and Sesterhenn, J{\"o}rn and Mehrmann, Volker},
  journal={SIAM Journal on Scientific Computing},
  volume={40},
  number={3},
  pages={A1322--A1344},
  year={2018},
  publisher={SIAM}
}

@article{lu2020lagrangian,
  title={Lagrangian dynamic mode decomposition for construction of reduced-order models of advection-dominated phenomena},
  author={Lu, Hannah and Tartakovsky, Daniel M},
  journal={Journal of Computational Physics},
  volume={407},
  pages={109229},
  year={2020},
  publisher={Elsevier}
}

@article{gerbeau2014approximated,
  title={Approximated Lax pairs for the reduced order integration of nonlinear evolution equations},
  author={Gerbeau, Jean-Fr{\'e}d{\'e}ric and Lombardi, Damiano},
  journal={Journal of Computational Physics},
  volume={265},
  pages={246--269},
  year={2014},
  publisher={Elsevier}
}

@article{iollo2014advection,
  title={Advection modes by optimal mass transfer},
  author={Iollo, Angelo and Lombardi, Damiano},
  journal={Physical Review E},
  volume={89},
  number={2},
  pages={022923},
  year={2014},
  publisher={APS}
}

@article{bernard2018reduced,
  title={Reduced-order model for the BGK equation based on POD and optimal transport},
  author={Bernard, Florian and Iollo, Angelo and Riffaud, S{\'e}bastien},
  journal={Journal of Computational Physics},
  volume={373},
  pages={545--570},
  year={2018},
  publisher={Elsevier}
}

@article{ehrlacher2020nonlinear,
  title={Nonlinear model reduction on metric spaces. Application to one-dimensional conservative PDEs in Wasserstein spaces},
  author={Ehrlacher, Virginie and Lombardi, Damiano and Mula, Olga and Vialard, Fran{\c{c}}ois-Xavier},
  journal={ESAIM: Mathematical Modelling and Numerical Analysis},
  volume={54},
  number={6},
  pages={2159--2197},
  year={2020},
  publisher={EDP Sciences}
}

@article{rim2023manifold,
  title={Manifold approximations via transported subspaces: Model reduction for transport-dominated problems},
  author={Rim, Donsub and Peherstorfer, Benjamin and Mandli, Kyle T},
  journal={SIAM Journal on Scientific Computing},
  volume={45},
  number={1},
  pages={A170--A199},
  year={2023},
  publisher={SIAM}
}

@article{iollo2025point,
  title={Point-set registration in bounded domains via the Fokker--Planck equation},
  author={Iollo, Angelo and Taddei, Tommaso},
  journal={Comptes Rendus. Math{\'e}matique},
  volume={363},
  number={G8},
  pages={809--824},
  year={2025}
}

@article{taddei2020registration,
  title={A registration method for model order reduction: data compression and geometry reduction},
  author={Taddei, Tommaso},
  journal={SIAM Journal on Scientific Computing},
  volume={42},
  number={2},
  pages={A997--A1027},
  year={2020},
  publisher={SIAM}
}

@article{ferrero2022registration,
  title={Registration-based model reduction of parameterized two-dimensional conservation laws},
  author={Ferrero, Andrea and Taddei, Tommaso and Zhang, Lei},
  journal={Journal of Computational Physics},
  volume={457},
  pages={111068},
  year={2022},
  publisher={Elsevier}
}

@article{fresca2021comprehensive,
  title={A comprehensive deep learning-based approach to reduced order modeling of nonlinear time-dependent parametrized PDEs},
  author={Fresca, Stefania and Dede’, Luca and Manzoni, Andrea},
  journal={Journal of Scientific Computing},
  volume={87},
  number={2},
  pages={61},
  year={2021},
  publisher={Springer}
}

@article{jain2017quadratic,
  title={A quadratic manifold for model order reduction of nonlinear structural dynamics},
  author={Jain, Shobhit and Tiso, Paolo and Rutzmoser, Johannes B and Rixen, Daniel J},
  journal={Computers \& Structures},
  volume={188},
  pages={80--94},
  year={2017},
  publisher={Elsevier}
}

@article{geelen2023operator,
  title={Operator inference for non-intrusive model reduction with quadratic manifolds},
  author={Geelen, Rudy and Wright, Stephen and Willcox, Karen},
  journal={Computer Methods in Applied Mechanics and Engineering},
  volume={403},
  pages={115717},
  year={2023},
  publisher={Elsevier}
}

@article{ballout2025combined,
  title={COMBINED GALERKIN AND REGRESSION BASED ALGORITHM FOR PARAMETER DEPENDANT PDE'S},
  author={Ballout, Hassan and Maday, Yvon and Prud'Homme, Christophe},
  journal={Hal preprint hal-05143191},
  year={2025}
}

@article{sirovich1987turbulence,
  title={Turbulence and the dynamics of coherent structures. I. Coherent structures},
  author={Sirovich, Lawrence},
  journal={Quarterly of applied mathematics},
  volume={45},
  number={3},
  pages={561--571},
  year={1987}
}

@book{holmes2012turbulence,
  title={Turbulence, coherent structures, dynamical systems and symmetry},
  author={Holmes, Philip},
  year={2012},
  publisher={Cambridge university press}
}

@article{binev2011convergence,
  title={Convergence rates for greedy algorithms in reduced basis methods},
  author={Binev, Peter and Cohen, Albert and Dahmen, Wolfgang and DeVore, Ronald and Petrova, Guergana and Wojtaszczyk, Przemyslaw},
  journal={SIAM journal on mathematical analysis},
  volume={43},
  number={3},
  pages={1457--1472},
  year={2011},
  publisher={SIAM}
}

@article{cohen2020reduced,
  title={Reduced basis greedy selection using random training sets},
  author={Cohen, Albert and Dahmen, Wolfgang and DeVore, Ronald and Nichols, James},
  journal={ESAIM: Mathematical Modelling and Numerical Analysis},
  volume={54},
  number={5},
  pages={1509--1524},
  year={2020},
  publisher={EDP Sciences}
}

@article{halko2011finding,
  title={Finding structure with randomness: Probabilistic algorithms for constructing approximate matrix decompositions},
  author={Halko, Nathan and Martinsson, Per-Gunnar and Tropp, Joel A},
  journal={SIAM review},
  volume={53},
  number={2},
  pages={217--288},
  year={2011},
  publisher={SIAM}
}

@article{balabanov2019randomized,
  title={Randomized linear algebra for model reduction. Part I: Galerkin methods and error estimation},
  author={Balabanov, Oleg and Nouy, Anthony},
  journal={Advances in Computational Mathematics},
  volume={45},
  number={5},
  pages={2969--3019},
  year={2019},
  publisher={Springer}
}

@article{balabanov2022block,
  title={Block subsampled randomized Hadamard transform for low-rank approximation on distributed architectures},
  author={Balabanov, Oleg and Beaup{\`e}re, Matthias and Grigori, Laura and Lederer, Victor},
  journal={arXiv preprint arXiv:2210.11295},
  year={2022}
}

@article{bui2008model,
  title={Model reduction for large-scale systems with high-dimensional parametric input space},
  author={Bui-Thanh, Tan and Willcox, Karen and Ghattas, Omar},
  journal={SIAM Journal on Scientific Computing},
  volume={30},
  number={6},
  pages={3270--3288},
  year={2008},
  publisher={SIAM}
}

@article{amsallem2011online,
  title={An online method for interpolating linear parametric reduced-order models},
  author={Amsallem, David and Farhat, Charbel},
  journal={SIAM Journal on Scientific Computing},
  volume={33},
  number={5},
  pages={2169--2198},
  year={2011},
  publisher={SIAM}
}

@article{carere2021weighted,
  title={A weighted POD-reduction approach for parametrized PDE-constrained optimal control problems with random inputs and applications to environmental sciences},
  author={Carere, Giuseppe and Strazzullo, Maria and Ballarin, Francesco and Rozza, Gianluigi and Stevenson, Rob},
  journal={Computers \& Mathematics with Applications},
  volume={102},
  pages={261--276},
  year={2021},
  publisher={Elsevier}
}

@article{chellappa2021training,
  title={A training set subsampling strategy for the reduced basis method},
  author={Chellappa, Sridhar and Feng, Lihong and Benner, Peter},
  journal={Journal of Scientific Computing},
  volume={89},
  number={3},
  pages={63},
  year={2021},
  publisher={Springer}
}

@article{billaud2024probabilistic,
  title={A probabilistic reduced basis method for parameter-dependent problems},
  author={Billaud-Friess, Marie and Macherey, Arthur and Nouy, Anthony and Prieur, Cl{\'e}mentine},
  journal={Advances in Computational Mathematics},
  volume={50},
  number={2},
  pages={19},
  year={2024},
  publisher={Springer}
}

@article{maday2020reduced,
  title={Reduced basis methods},
  author={Maday, Yvon and Patera, A},
  journal={Model Order Reduction; Benner, P., Grivet-Talocia, S., Quarteroni, A., Rozza, G., Schilders, W., Silveira, L., Eds},
  pages={139--179},
  year={2020}
}

@article{bensalah2025nonlinear,
  title={Nonlinear manifold approximation using compositional polynomial networks},
  author={Bensalah, Antoine and Nouy, Anthony and Soffo, Joel},
  journal={arXiv preprint arXiv:2502.05088},
  year={2025}
}

@article{de2025nonlinear,
  title={Nonlinear projection-based model order reduction with machine learning regression for closure error modeling in the latent space},
  author={de Parga, S Ares and Tezaur, Radek and Hern{\'a}ndez, Carlos G and Farhat, Charbel},
  journal={arXiv preprint arXiv:2507.00634},
  year={2025}
}

@article{geelen2024learning,
  title={Learning physics-based reduced-order models from data using nonlinear manifolds},
  author={Geelen, Rudy and Balzano, Laura and Wright, Stephen and Willcox, Karen},
  journal={Chaos: An Interdisciplinary Journal of Nonlinear Science},
  volume={34},
  number={3},
  year={2024},
  publisher={AIP Publishing}
}

\end{document}